\documentclass{amsart}
\input{commands.tex}
\usepackage{hyperref}

\def\fr{\mathfrak f} 
\def\un{\mathfrak u} 
\def\frh{\fr^h} 
\def\unh{\un^h} 
\def\comm#1{}

\remfalse
\begin{document}

\title[Non-abelian zeta functions]{Moduli spaces of stable pairs and non-abelian zeta functions of curves via wall-crossing}

\author{Sergey Mozgovoy}%
\author{Markus Reineke}%
\email{mozgovoy@maths.tcd.ie}
\email{mreineke@uni-wuppertal.de}
\date{\today}

\begin{abstract}
In this paper we study and relate the non-abelian zeta functions introduced by Weng and invariants of the moduli spaces of arbitrary rank stable pairs over curves. We prove a wall-crossing formula for the latter invariants and obtain an explicit formula for these invariants in terms of the motive of a curve. Previously, formulas for these invariants were known only for rank 2 due to Thaddeus and for rank 3 due to Mu\~noz. Using these results we obtain an explicit formula for the non-abelian zeta functions, we check the uniformity conjecture by Weng for the ranks 2 and 3, and we prove the counting miracle conjecture. 
\end{abstract}

\maketitle

\section{Introduction}
This paper has two motivations.
The first one is the study of motivic invariants (like Poincar\'e polynomials, Hodge polynomials, or motives) of moduli spaces of pairs on a smooth projective curve. The moduli spaces of pairs were studied extensively in the last two decades \cite{bradlow_special,thaddeus_stable,huybrechts_stable,garcia-prada_invariant}. Their Poincar\'e resp.\ Hodge polynomials were computed by Thaddeus \cite{thaddeus_stable} in the rank two case and by Mu{\~n}oz \cite{munoz_hodgea} in the rank three case. For rank four it was proved \cite{munoz_motives}, and conjectured for general rank, that the motive of the moduli space can be expressed in terms of the motive of the curve. We will compute the motives of these moduli spaces for arbitrary rank in terms of an explicit Zagier-type formula, and in particular confirm the above conjecture.

Our second motivation is the work of Weng \cite{weng_zeta} on the (pure) non-abelian zeta functions of curves. Given a curve $X$ over a finite field $\bF_q$, let $\bfM(r,d)$ denote the set of isomorphism classes of semistable vector bundles on $X$ having rank $r$ and degree $d$. Define the rank $r$ pure non-abelian zeta function by
$$Z_{X,r}(t)=\sum_{k\ge0}\sum_{E\in\bfM(r,kr)}\frac{q^{h^0(X,E)}-1}{\n{\Aut E}}t^{k}.$$
\comm{Weng considers $t^{kr}$ instead of $t^k$.}
The special uniformity conjecture of Weng \cite[Conj.~9]{weng_zeta} suggests that the rank $r$ pure zeta function coincides with the group zeta function associated to the special linear group $\SL_r$ \cite[\S2]{weng_zeta}. This conjecture was announced to be a theorem in \cite[Theorem 5]{weng_special}. This result can be used to express the rank $r$ pure zeta functions in terms of the usual zeta function of a curve. We will use a different approach based on moduli spaces of pairs to compute rank $r$ zeta functions by an explicit Zagier-type formula. We will also check the uniformity conjecture for the rank $2$ and $3$ zeta functions.

Let us now describe our results in more detail. Let $X$ be a smooth projective complex curve. A pair $(E,s)$ on $X$ consists of a vector bundle $E$ on $X$ and a nonzero section $s\in H^0(X,E)$. There is a notion of stability of such pairs depending on a parameter $\ta\in\bR$. For any $(r,d)\in\bZ_{>0}\xx\bZ$, let $\cM_\ta(r,d)$ denote the moduli stack of \ta-semistable pairs with a vector bundle having rank $r$ and degree $d$ and let \[\fr_\ta(r,d)=(q-1)q^{(1-g)\binom r2}[\cM_\ta(r,d)]\]
be its motive up to some factor, where $q$ denotes the Lefschetz motive. Define the generating function
$$\fr_\ta=\sum_{r,d}\fr_\ta(r,d)x_1^rx_2^dx_3$$
in a certain completion of a skew polynomial ring. For example, for $\ta\gg0$ (we write $\ta=\infty$) we have
$$\fr_{\infty}=x_1x_3Z_X(x_2),$$
where $Z_X(t)$ is the motivic zeta function of $X$.

On the other hand, let $\un_{\ge\ta}(r,d)$ be the twisted motive of the moduli stack of vector bundles having rank $r$, degree $d$ and such that the quotients of their Harder-Narasimhan filtrations have slopes $\ge\ta$. These invariants can be computed by the formula of Zagier \cite{zagier_elementary}, based on the works of Harder and Narasimhan \cite{harder_cohomology}, Desale and Ramanan \cite{desale_poincare}, and Atiyah and Bott \cite{atiyah_yang}. Define the generating function
$$\un_{\ge\ta}=\sum_{d/r\ge\ta}\un_{\ge\ta}(r,d)x_1^rx_2^d.$$
Our first main result is the following wall-crossing formula (see Theorem \ref{th:main}):

\begin{theorem}
For any $\ta\in\bR$, we have
$$\fr_\ta=(\un_{>\ta}\inv\circ\fr_\infty\circ\un_{\ge\ta})|_{\mu\le\ta},$$
where the truncation $|_{\mu\le\ta}$ means that we keep only the coefficients $x_1^rx_2^dx_3$ with $\frac dr\le\ta$.
\end{theorem}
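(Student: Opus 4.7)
The plan is to deduce the formula from an algebraic identity in the motivic Hall algebra of pair objects on $X$, passed to the completed skew polynomial ring via a standard integration morphism; under this morphism the twisted product $\circ$ corresponds to the Hall-algebra product on classes of extensions, and $x_3$ tracks the section decoration. The $\circ$-invertibility of $\un_{>\ta}$ (needed for $\un_{>\ta}\inv$) follows from its constant term being $1$, and the formula $\fr_\infty=x_1x_3Z_X(x_2)$ reflects that for $\ta\gg 0$ a $\ta$-semistable pair is necessarily of the form $(\cO(D),s_D)$ with $D$ effective, reducing the count to the motivic zeta function of $X$.

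The central step is a canonical two-step Harder--Narasimhan filtration of pair objects. The plan is to prove that every pair $(E,s)$ with $\mu(E)\le\ta$ admits a unique short exact sequence
\[
0\to F\to(E,s)\to(G,\bar s)\to0
\]
in the pair category, where $F$ is a subbundle not containing $s$ with all HN slopes $>\ta$ (so $F\in\un_{>\ta}$), and $(G,\bar s)$ is $\ta$-semistable. This follows from HN formalism applied to the pair category with respect to $\ta$-pair-stability; the hypothesis $\mu(E)\le\ta$ ensures the section lies in the lowest-slope HN piece, so that all other pieces are pure bundles assembling into $F$. Collecting contributions yields the Hall-algebra identity $\Xi^{\mu\le\ta}=\un_{>\ta}\circ\fr_\ta$, where $\Xi^{\mu\le\ta}$ is the characteristic element of all pair objects with $\mu(E)\le\ta$; left-multiplying by $\un_{>\ta}\inv$ then gives $\fr_\ta=\un_{>\ta}\inv\circ\Xi^{\mu\le\ta}$.

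It then remains to show $\un_{>\ta}\inv\circ\Xi^{\mu\le\ta}=(\un_{>\ta}\inv\circ\fr_\infty\circ\un_{\ge\ta})|_{\mu\le\ta}$. For this I would use the section-saturation filtration: every pair $(E,s)$ fits canonically into $0\to(\cO(D),s_D)\to(E,s)\to E/\cO(D)\to 0$ with $\cO(D)=\mathrm{sat}(\im s)$, giving the unrestricted Hall-algebra identity $\Xi=\fr_\infty\circ\un^{\mathrm{all}}$. Combining with the HN decomposition $\un^{\mathrm{all}}=\un_{\ge\ta}\circ\un_{<\ta}$, a direct comparison after the $|_{\mu\le\ta}$ truncation is made to show that the $\un_{<\ta}$ tail contributes only terms that, once transported through $\un_{>\ta}\inv$, vanish above the slope cutoff, leaving the claimed formula.

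The principal obstacle is precisely this last step. The naive hope $\Xi^{\mu\le\ta}=(\fr_\infty\circ\un_{\ge\ta})|_{\mu\le\ta}$ fails, since there exist pairs with $\mu(E)\le\ta$ whose quotient $E/\cO(D)$ has HN slopes below $\ta$ (for example, non-semistable extensions of line bundles of unequal degree with the section in the top piece). The correction has to be absorbed by the $\un_{>\ta}\inv$ factor, and establishing this amounts to constructing an explicit bijection between the two canonical HN-type stratifications of the moduli stack of pairs and matching them after truncation. A secondary task is the bookkeeping of motivic weights: the Euler pairing on $X$ provides the twist of $\circ$, while the prefactor $(q-1)q^{(1-g)\binom{r}{2}}$ is calibrated exactly so that $\fr_\ta$ lives in the same completed skew polynomial ring as $\un_{\ge\ta}$, making the integration morphism an algebra homomorphism.
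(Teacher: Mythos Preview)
Your proposal has two genuine gaps.

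First, the claimed two-step filtration is false. You assert that for any pair $(E,s)$ with $\mu(E)\le\ta$ the section lies in the lowest HN piece of $E$, so that the part of $E$ with HN slopes $>\ta$ is an unframed subobject $F$ with $\ta$-semistable quotient. But take $\ta=1$, $E=L_0\oplus L_2$ a sum of line bundles of degrees $0$ and $2$, and $s\in H^0(L_2)$ nonzero. Then $\mu(E)=1=\ta$, yet the section sits in the \emph{highest} HN piece. The only candidate $F$ with slopes $>\ta$ is $L_2$; the quotient $(L_0,\bar s)$ has $\bar s=0$ and has unframed quotient $L_0$ of slope $0<\ta$, so it is not $\ta$-semistable. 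Taking $F=0$ does not help either. The correct canonical filtration in the triple category is three-step (a torsion-pair filtration as in the paper's Proposition~4.2), and in general neither outer piece is trivial.

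Second, and more seriously, you treat the integration map $I:H(\cA)\to\bA$ as an algebra homomorphism. It is not: the triple category has nonvanishing $\Ext^2$, and $I([E]\circ[F])=I([E])\circ I([F])$ only when $\Ext^2(F,E)=0$. This is precisely why the paper does \emph{not} attempt a single global decomposition. Instead it crosses one wall at a time: for each fixed $\ta$ it proves Hall-algebra identities
\[
\frh_\ta=\frh_{\ta_+}\circ\unh_\ta,\qquad \frh_\ta|_{\mu<\ta}=\unh_\ta\circ\frh_{\ta_-},
\]
using the canonical $\ta_\pm$-filtrations of $\ta$-semistable framed objects (Lemmas~4.7--4.8). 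At each such step the required $\Ext^2$-vanishing is verified via Corollary~3.10: for the first identity the quotient is unframed, for the second the quotient framed object has nonzero section because $\mu(E_1)<\ta$. One then applies $I$, obtains $\fr_{\ta_-}=(\un_\ta^{-1}\circ\fr_{\ta_+}\circ\un_\ta)|_{\mu<\ta}$, and iterates over all walls $\ta'\ge\ta$ down from $\fr_\infty$. Your ``direct'' approach would need $\Ext^2$-vanishing for products like $\un_{>\ta}\circ\fr_\ta$ and $\fr_\infty\circ\un^{\mathrm{all}}$ simultaneously, and you have not checked this; indeed the paper remarks in the introduction that the $\Ext^2$-vanishing needed for wall-crossing only holds for $\ta>d/r$, which is exactly what forces the inductive descent rather than a single jump.
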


This result implies that the motive of $\cM_\ta(r,d)$ can be expressed in terms of the motive of $X$ and its symmetric products. This was conjectured in \cite{munoz_motives}.
Using generalizations of Zagier's formula for the motive of the moduli stack of semistable bundles (to be discussed in an appendix which also contains a new proof of Zagier's original formula), this yields the following explicit formula for the motive $[M_\tau(r,d)]$ of the moduli space of \ta-semistable pairs (see Theorem \ref{th:explicit}):

\begin{theorem} For $r\ge2$ and generic $\tau$, we have
\begin{multline*}
[M_\tau(r,d)]
=q^{(g-1)\binom r2}\sum_{r_1+\ldots+r_k=r-1}\frac{b_{r_1}\ldots b_{r_k}}{\prod_{i=1}^{k-1}{(1-q^{r_i+r_{i+1}})}}
\coeff_{t^{d-\lceil(r-1)\tau\rceil}}\\
\rbr{Z_X(t)\cdot
\rbr{\frac{q^{{F}_0}}{1-q^{r_1+1}t}-\sum_{p=1}^{k-1}\frac
{q^{{F}_p}(1-q^{r_p+r_{p+1}})t^{\delta_p}}
{(1-q^{r_{p+1}+1}t)(1-q^{-r_p}t)}-\frac{q^{{F}_k}}{1-q^{-r_k}t}}},
\end{multline*}
where $b_r$ equals (up to a twist) the motive of the moduli stack of rank $r$ bundles, and $F_p$ and $\delta_p$ are certain explicit exponents.
\end{theorem}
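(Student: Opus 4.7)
The starting point is the wall-crossing formula of Theorem~\ref{th:main},
\[
\fr_\ta=(\un_{>\ta}\inv\circ\fr_\infty\circ\un_{\ge\ta})\big|_{\mu\le\ta},
\]
combined with the closed expression $\fr_\infty=x_1x_3Z_X(x_2)$. The plan is to substitute the generalized Zagier expansions (developed in the appendix) for both $\un_{\ge\ta}$ and $\un_{>\ta}\inv$, carry out the composition in the underlying skew polynomial ring, extract the coefficient of $x_1^rx_2^dx_3$, and finally convert the resulting expression for $\fr_\ta(r,d)$ into one for $[M_\ta(r,d)]$ via the twist $(q-1)q^{(1-g)\binom r2}$ appearing in the definition.

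The generalized Zagier formula expresses $\un_{\ge\ta}$ as an alternating sum over ordered compositions of the rank equipped with one-sided slope constraints by $\ta$; each summand is a product of the building blocks $b_s$ (the twisted motives of the rank $s$ bundle stacks) divided by denominators $1-q^{s_i+s_{i+1}}$ arising from the recursive shape of the formula, and $\un_{>\ta}\inv$ admits a parallel expansion with the opposite slope inequality. Since $\fr_\infty$ contributes exactly one unit of rank through the monomial $x_1$, concatenating the left composition, $\fr_\infty$, and the right composition produces a single composition $(r_1,\ldots,r_k)$ with $r_1+\cdots+r_k=r-1$; this yields the outer summation of the theorem together with the prefactor $\prod_i b_{r_i}/\prod_i(1-q^{r_i+r_{i+1}})$.

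The factor $Z_X(t)$ in the statement is the direct image of the middle piece $Z_X(x_2)$ of $\fr_\infty$. The two ``corner'' rational terms $q^{F_0}/(1-q^{r_1+1}t)$ and $-q^{F_k}/(1-q^{-r_k}t)$ come from summing a geometric series in degree for the outermost pieces, which carry a slope constraint only on one side; the poles at $q^{r_1+1}$ and $q^{-r_k}$ record the precise slope boundaries dictated by the powers of $q$ produced by the skew multiplication. The telescoping interior terms with $(1-q^{r_p+r_{p+1}})t^{\delta_p}$ in the numerator arise from a partial fraction decomposition that collapses the two-sided slope constraint at each internal junction $p$ into a combination of the same two families of poles, once the truncation $|_{\mu\le\ta}$ is applied to cut off the divergent tails.

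The principal obstacle will be the meticulous bookkeeping of the exponents $F_p$ and $\delta_p$. These are determined by (i) the skew commutation in the polynomial ring, which encodes the Euler pairing $\chi(E,F)=(1-g)\rk E\cdot\rk F+(\rk E\cdot\deg F-\deg E\cdot\rk F)$ and therefore accumulates a power of $q$ at every concatenation step, and (ii) ceiling adjustments of the form $\lceil(r-1)\ta\rceil$ that appear when the real-valued slope inequalities at each junction are reexpressed as integer degree inequalities. Once the identity for $\fr_\ta(r,d)$ is established, we conclude by noting that for generic $\ta$ every $\ta$-semistable pair is $\ta$-stable; the stack $\cM_\ta(r,d)$ is then a $\Gm$-gerbe over the coarse moduli $M_\ta(r,d)$ via the scaling of sections, so $\fr_\ta(r,d)=q^{(1-g)\binom r2}[M_\ta(r,d)]$, and multiplying through by $q^{(g-1)\binom r2}$ yields the displayed formula.
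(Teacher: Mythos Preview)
Your plan is correct and follows essentially the same route as the paper: the paper also substitutes the appendix formulas for $\un_{\ge\ta}$ and $\un_{>\ta}^{-1}$ into Theorem~\ref{th:main}, extracts the $x^{(r,d,1)}$-coefficient (splitting into the three cases left-only, right-only, and both, which become your ``corner'' and ``interior'' terms), sums the resulting geometric series in the internal degree to obtain the intermediate Lemma~\ref{preliminaryexplicit}, and then performs the summation over $e$ via the identity $\sum_{e=0}^N[S^eX]q^{ae}=\coeff_{t^N}\!\big(Z_X(t)\,q^{aN}/(1-q^{-a}t)\big)$. The only terminological quibble is that the step producing the interior term goes in the direction opposite to a partial-fractions decomposition: two single-pole expressions are \emph{combined} into the displayed double-pole term, with the $t^{\delta_p}$ arising from the ceiling discrepancy $\lceil r_{\le p}\ta\rceil+\lceil r_{\ge p+1}\ta\rceil-\lceil(r-1)\ta\rceil\in\{0,1\}$.
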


The wall-crossing formula can also be used to compute the higher zeta functions.
We can write the motivic version of the higher zeta-functions as follows
$$Z_{X,r}(t)=(q-1)\sum_{k\ge0}[\cM_k(r,kr)]t^k
=q^{(g-1)\binom r2}\sum_{k\ge0}\fr_k(r,kr)t^k
.$$
This means that in order to find $Z_{X,r}(t)$ we have to compute $\fr_\ta(r,d)$ for $\ta=\frac dr$. Applying the above theorem we obtain, for any $\ta\in\bR$:
$$\sum_{\frac dr=\ta}\fr_\ta(r,d)x_1^rx_2^dx_3=(\un_{>\ta}\inv\circ\fr_\infty\circ\un_{\ge\ta})|_{\mu=\ta}.$$
The following result describes the higher zeta functions explicitly.

\begin{theorem}
Let $\what Z_{X,r}(t)=t^{1-g}Z_{X,r}(t)$ and $\what Z_X(t)=t^{1-g}Z_X(t)$.
Then
\begin{multline*}
\what Z_{X,r}(t)=q^{(g-1)\binom r2}\sum_{r_1+\dots+r_k=r-1}\frac{b_{r_1}\dots b_{r_k}}{\prod_{i=1}^{k-1}(1-q^{r_i+r_{i+1}})}\\
\rbr{\frac{\what Z_X(t)}{1-q^{r_1+1}t}
-\sum_{i=1}^{k-1}\frac{(1-q^{r_i+r_{i+1}})q^{r_{<i}}t\what Z_X(q^{r_{\le i}}t)}{(1-q^{r_{<i}}t)(1-q^{r_{\le i+1}+1}t)}-\frac{q^{r_{<k}}t\what Z_X(q^{r-1}t)}{1-q^{r_{<k}}t}}.
\end{multline*}
\end{theorem}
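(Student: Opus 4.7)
The plan is to combine the diagonal wall-crossing identity with the defining formula for $Z_{X,r}$ in terms of the stacky motives $\fr_m$. The identity
$$\sum_{\frac dr=\ta}\fr_\ta(r,d)x_1^rx_2^dx_3=(\un_{>\ta}\inv\circ\fr_\infty\circ\un_{\ge\ta})|_{\mu=\ta}$$
stated just above the theorem, together with
$$Z_{X,r}(t)=q^{(g-1)\binom r2}\sum_{m\ge 0}\fr_m(r,mr)t^m,$$
reduces the proof to evaluating the triple composition at each integer slope $\ta=m\in\bZ_{\ge 0}$, extracting the coefficient of $x_1^rx_2^{mr}x_3$, and summing the result against $t^m$.

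I would substitute $\fr_\infty=x_1x_3Z_X(x_2)$ and the Zagier-type expansions of $\un_{\ge\ta}$ and $\un_{>\ta}\inv$ established in the appendix into the triple composition. In the Euler-form-twisted skew polynomial ring, the composition unfolds as a sum indexed by compositions $r_1+\cdots+r_k=r-1$, with each $r_i$ contributing a factor $b_{r_i}$ from the motive of the stack of rank-$r_i$ semistable bundles, and neighboring pieces producing wall-crossing denominators $1-q^{r_i+r_{i+1}}$. Together these yield the outer prefactor $\prod_{i=1}^{k-1}(1-q^{r_i+r_{i+1}})\inv$ of the claimed formula.

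The three groups of terms in the formula correspond to the three possible positions of the rank-one section-pair factor $x_1x_3$ relative to the HN sequence of ranks $r_1,\dots,r_k$: at the far left of the sequence, in an interior gap between $r_i$ and $r_{i+1}$, or at the far right. Restricting to $\mu=m$ fixes a linear relation among the $x_2$-degrees, which forces the remaining $Z_X(x_2)$ factor to be evaluated at $x_2\mto t$, $q^{r_{\le i}}t$, or $q^{r-1}t$ respectively, matching the three occurrences of $\what Z_X$ in the formula. The residual $m$-dependence collected from the Euler-form twists is of the form $q^{am}$ for exponents $a$ read off from the insertion position, so summing $\sum_{m\ge 0}q^{am}t^m$ produces the denominators $(1-q^{r_1+1}t)\inv$, $(1-q^{r_{<i}}t)\inv(1-q^{r_{\le i+1}+1}t)\inv$, and $(1-q^{r_{<k}}t)\inv$ appearing in the three groups. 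Rescaling $Z_X$ to $\what Z_X$ via the factor $t^{1-g}$ and collecting the signs produced by the inversion $\un_{>\ta}\inv$ then yields the full formula. The main obstacle will be the careful bookkeeping of the Euler-form exponents and of the ceiling shifts $\lceil(r-1)\ta\rceil$ through the skew-ring composition; once these are tracked, the geometric-series summation produces the formula essentially mechanically.
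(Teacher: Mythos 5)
Your proposal follows essentially the same route as the paper: the authors also substitute the Zagier-type expansions of $\un_{\ge\ta}$ and $\un_{>\ta}\inv$ into the wall-crossing formula (this is their Lemma \ref{preliminaryexplicit}), specialize to $\tau=s$, $d=rs$ where all roundings trivialize, and then perform the double summation over $s$ and the symmetric-power degree $e$ via the identity $\sum_{s\ge0}\sum_{e=0}^{s-1}[S^eX]q^{as+b(s-e)}t^s=q^{a+b}tZ_X(q^at)/(1-q^{a+b}t)$, which simultaneously produces the shifted arguments $\what Z_X(q^{r_{\le i}}t)$ and the denominators in $t$. Your identification of the three groups of terms with the splitting position $p$ of the composition $r_1+\dots+r_k=r-1$ between the left factor $\un_{>\ta}\inv$ and the right factor $\un_{\ge\ta}$ matches the paper's bookkeeping exactly.
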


This theorem implies a generalization of the counting miracle conjecture of Weng \cite[Conj.~15]{weng_zetaa}
\[q^{(1-g)r}Z_{X,r}(0)
=[\cM(r-1,0)],\]
where $\cM(r,0)$ denotes the moduli stack of semistable vector bundles having rank $r$ and degree zero.

Our approach can be also used in order to find the higher zeta functions of curves over finite fields. In this case motives should be substituted by the so-called c-sequences introduced in \cite{mozgovoy_poincare}. All of the above formulas remain the same.
\rem{Should we also formulate the Zagier-type formula for $Z_{X,r}(t)$ here? The special uniformity in low rank?}
\rem{In which form should we mention the section on DT invariants here?}

The reader should not be deceived by the apparent simplicity of our approach. A lot of obnoxious geometry happens behind the innocent algebraic scene. While for ranks $2$ and $3$ it is possible, with some effort, to control destabilizing loci when crossing the walls, the situation becomes much more complicated for higher ranks. Our basic idea goes back to the work of Thaddeus. In order to find the motivic invariant of the moduli stack $\cM_\ta(r,d)$ of \ta-semistable pairs, we first find this invariant for $\ta\gg0$ and then decrease \ta, thoroughly analyzing the behavior of our invariants when crossing the walls, \ie when \ta goes through the critical values, where some semistable pairs become non-semistable. In this way we can find $[\cM_\ta(r,d)]$ for any $\ta\ge\frac dr$. But in contrast to \cite{munoz_hodge,thaddeus_stable}, our approach does not use the geometry of the moduli spaces directly. Instead, we use ideas from motivic wall-crossing \cite{kontsevich_stability} and derive the behaviour of the motivic invariants from identities in a Hall algebra of a category of triples. For $\ta<\frac dr$ the moduli space is empty. One might ask why we do not cross just one wall at $\ta=\frac dr$ and find the invariant $[\cM_\ta(r,d)]$ for $\ta=\frac dr+\eps$ with $0<\eps\ll1$; the answer is that in order to prove the wall-crossing formula we need enough vanishing of second \Ext in the category of triples, which only holds for $\ta>\frac dr$.

{\bf Acknowledgments:} The authors would like to thank Tam\'as Hausel for helpful remarks about the results of the paper.

\section{Preliminaries}
All results of this section will be formulated for an algebraic curve $X$ over an algebraically closed field $\bk$ of characteristic zero, and for the motives of moduli stacks over it. Motives will be considered as elements in the Grothendieck ring $K_0({\rm St}_{\bk})$ of stacks (of finite type over $\bk$ and with affine stabilizers), which is related to the Grothendieck ring of $\bk$-varieties via localization or dimensional completion (see e.g. \cite{garcia-prada_motives}). We denote the Lefschetz motive by $q$ and always work in the coefficient ring $R=K_0({\rm St}_{\bk})[q^{\pm\frac{1}{2}}]$. We can also substitute motives by virtual Poincar\'e polynomials or E-polynomials. Also we can formulate all the results for a curve defined over a finite field, in which case we have to substitute motives by the so-called c-sequences introduced in \cite{mozgovoy_poincare}.

\subsection{Zeta function}
Given an algebraic variety $X$ we define its motivic zeta function
\eq{Z_X(t)=\sum_{n\ge0}[S^nX]t^n=\Exp(t[X]).}
If $X$ is a curve of genus $g$ then
$$Z_X(t)=\frac{P_X(t)}{(1-t)(1-qt)},$$
where $P_X(t)$ is a polynomial of degree $2g$. The value $P_X(1)$ equals the motive of the Jacobian $[\Jac X]$. The function $Z_X(t)$ satisfies the functional equation
$$Z_X(1/qt)=(qt^2)^{1-g}Z_X(t).$$
Therefore the function
\eq{\what Z_X(t)=t^{1-g}Z_X(t)}
satisfies $\what Z_X(1/qt)=\what Z_X(t)$.

\subsection{Stacks of bundles}
Let $X$ be a curve of genus $g$ and let $\al=(r,d)\in\bZ_{>0}\xx\bZ$.
Let $\Bun_{r,d}$ denote the stack of vector bundles over $X$ having rank $r$ and degree $d$. Its motive is independent of $d$ \cite[Section 6]{behrend_motivica}:
$$[\Bun_{r,d}]=\frac{[\Jac X]}{q-1}q^{(r^2-1)(g-1)}\prod_{i=2}^rZ_X(q^{-i})=\frac{P_X(1)}{q-1}\prod_{i=1}^{r-1}Z_X(q^i).$$
Define
\begin{equation}
b_r
=q^{(1-g)\binom r2}[\Bun_{r,d}]
=\frac{P_X(1)}{q-1}\prod_{i=1}^{r-1}\what Z_X(q^i)
.
\end{equation}
\comm{$[\Jac X]=P_X(1)$ (for $c$-sequences this means that $[\Jac X]_k=(\Jac X)(\bF_{q^k})=\prod_{i=1}^{2g}(1-\om_i^k)$).}
Let
\begin{enumerate}
	\item $\cM(\al)=\cM(r,d)$ be the moduli stack,
	\item $M(\al)=M(r,d)$ be the moduli space,
	\item $\bfM(\al)=\bfM(r,d)$ be the set of isomorphism classes
\end{enumerate}
of semistable vector bundles $E$ over $X$ with $\ch(E)=\al$.
Define
\begin{equation}
\be_{\al}
=q^{(1-g)\binom r2}[\cM(\al)].
\label{eq:}
\end{equation}

\subsection{Chern characters}
There is group homomorphism
\[\ch:K_0(\Coh X)\to\bZ^2\]
given by $\ch(E)=(\rk E,\deg E)$.
For any $E,F\in\Coh X$, define
\[\hi(E,F)=\dim\Hom_X(E,F)-\dim\Ext^1_X(E,F).\]
Let $\ch E=\al=(r,d)$ and $\ch F=\be=(r',d')$.
Then by the Riemann-Roch theorem
\[\hi(E,F)=rd'-r'd+(1-g)rr'.\]
Define
\eq{\hi(\al,\be)=rd'-r'd+(1-g)rr',}
\eq{\hi(\al)=\hi((1,0),\al)=d+(1-g)r,}
\eq{\ang{\al,\be}=\hi(\al,\be)-\hi(\be,\al)=2(rd'-r'd).}

\subsection{Integration map}
\label{prelim:int map}
Define the quantum affine plane $\bA_0$ to be the completion of the algebra $R[x_1,x_2^{\pm1}]$ with multiplication
$$x^{\al}\circ x^{\be}=(-q^\oh)^{\ang{\al,\be}}x^{\al+\be},$$
where we allow only elements $f=\sum_{\al\in\bN\xx\bZ} f_{\al}x^{\al}$ with
$$\inf\left\{\left.\frac d{r+1}\ \right|f_{r,d}\ne0\right\}>-\infty.$$
Let $H(\cA_0)$ be the Hall algebra of the category $\cA_0=\Coh X$ \cite{mozgovoy_poincare} (we use the opposite multiplication where the product $[E]\circ[F]$ counts extensions from $\Ext^1(F,E)$). There is an algebra homomorphism $I:H(\cA_0)\to\bA_0$ \cite{reineke_counting}, called an integration map, defined by
\[E\mto (-q^\oh)^{\hi(E,E)}\frac{x^{\ch E}}{[\Aut E]}.\]
For example, if $\one_\al\in H(\cA_0)$ (resp.\ $\one^\sst_\al\in H(\cA_0)$) is an element counting all (resp.\ all semistable) vector bundles having Chern character $\al=(r,d)$, then
\[I(\one_\al)=(-q^\oh)^{(1-g)r^2}[\Bun_{r,d}]x^\al=(-q^\oh)^{(1-g)r}b_r x^\al,\]
\[I(\one^\sst_\al)=(-q^\oh)^{(1-g)r^2}[\cM(\al)]x^\al=(-q^\oh)^{(1-g)r}\be_\al x^\al.\]
Using the Harder-Narasimhan filtrations and applying the integration map, we obtain
\eql{b_r
=\sum_{(\al_1,\dots,\al_k)\in\cP^d_\al}q^{\oh\sum_{i<j}\ang{\al_i,\al_j}}\be_{\al_1}\dots\be_{\al_k},}{eq:HN}
where $\cP^d_\al$ is the set of slope decreasing partitions of \al.

\subsection{Zagier formula}
\label{sec:zagier}
It was proved by Zagier \cite{zagier_elementary} that if there are families of elements $(b_r)_{r\ge1}$, $(\be_\al)_{\al\in\bZ_{>0}\xx\bZ}$ satisfying \eqref{eq:HN}, then
\eql{\be_\al
=\sum_{\over{r_1,\dots,r_k>0}{r_1+\dots+r_k=r}}
\left(\prod_{i=1}^{k-1}\frac{q^{(r_i+r_{i+1})\set{(r_1+\dots+r_i)d/r}}}{1-q^{r_i+r_{i+1}}}\right)
b_{r_1}\dots b_{r_k}.}{eq:zagier}
This gives an effective way to compute the motives of the moduli stacks $\cM(\al)$ of semistable vector bundles.

\section{Semistable pairs and triples}
\subsection{Semistable pairs}
Throughout the paper, let $X$ be a smooth projective curve over a field \bk, let $\ta\in\bR$ and $(r,d)\in\bZ_{>0}\xx\bZ$.

\begin{definition}
A pair $(E,s)$ over $X$ consists of a vector bundle $E$ over $X$ and a nonzero section $s\in H^0(X,E)$. Pairs over $X$ form a \bk-linear category: a morphism $f:(E,s)\to(E',s')$ between two pairs is an element $f=(f_0,f_1)\in\bk\xx\Hom_X(E,E')$ such that $f_1s=s'f_0$.
\end{definition}

\comm{
\begin{remark}[to move]
For the pair $(\cO_X,1)$ we want its algebra of endomorphisms to be $\bk$. Therefore, we define the group of morphisms from $(E_1,s)$ to $E_1',s')$ to consist of pairs $(f_0,f_1)\in\bk\xx\Hom(E_1,F_1)$ such that $f_1s=s f_0$. If $E=(E_1,\cO_X,s)$ is a a triple corresponding to $(E_1,s)$ then $\End E=\End(E_1,s)$.
\end{remark}
}

\begin{definition}
\label{df:ta stable pair}
A pair $(E,s)$ over $X$ is called \ta-semistable (resp.\ stable) if
\begin{enumerate}
	\item For any subbundle $F\sb E$ we have $\mu(F)\le\ta$ (resp.\ $\mu(F)<\ta$).
	\item For any subbundle $F\sb E$ with $s\in H^0(X,F)$ we have $\mu(E/F)\ge\ta$ (resp.\ $\mu(E/F)>\ta$).
\end{enumerate}
\end{definition}

\begin{definition}\label{df:generic}
Given $(r,d)\in\bZ_{>0}\xx\bZ$, we say that $\ta\in\bR$ is $(r,d)$-generic if $\ta\ne\frac dr$ and $\ta\notin\frac{1}{r'}\bZ$ for any $1\le r'< r$.
In this case any $\ta$-semistable pair $(E,s)$ with $\ch E=(r,d)$ is \ta-stable.
\end{definition}

We denote 
\begin{enumerate}
	\item by $\cM_\ta(r,d)$ the moduli stack (see \cite{garcia-prada_motives}),
	\item by $M_\ta(r,d)$ the moduli space (\cite{huybrechts_stable}),
	\item by $\bfM_\ta(r,d)$ the set of isomorphism classes
\end{enumerate}
of \ta-semistable pairs $(E,s)$ with $\ch(E)=(r,d)$, i.e. of rank $r$ and degree $d$.

\begin{remark}
If $\bk$ is algebraically closed of characteristic zero, we can define the motives $[\cM_\ta(r,d)]$, $[M_\ta(r,d)]$ as elements of $K_0({\rm St}_{\bk})$, the Grothendieck ring of stacks over $\bk$.
The analogue of $[\cM_\ta(r,d)]$ over a finite field $\bF_q$ is
$$\sum_{(E,s)\in\bfM_\ta(r,d)}\frac{1}{\n{\Aut(E,s)}}.$$
If $\ta$ is $(r,d)$-generic then
$$[\cM_\ta(r,d)]=\frac{[M_\ta(r,d)]}{q-1}.$$
\end{remark}

\begin{remark}
Let $(r,d)\in\bZ_{>0}\xx\bZ$ and let $\ta\in\bR$. 
If $M_\ta(r,d)\ne\es$ then $\ta\ge\frac dr$.
\end{remark}

\begin{lemma}
Let $(r,d)\in\bZ_{>0}\xx\bZ$ and let $\ta=\frac{d}{r}$. Then a pair $(E,s)$ with $\ch(E)=(r,d)$ is \ta-semistable if and only if $E$ is semistable.
\end{lemma}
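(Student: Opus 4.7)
The plan is to observe that when $\ta=d/r=\mu(E)$, condition (1) in Definition \ref{df:ta stable pair} literally says that $E$ is semistable, and that condition (2) is automatically implied by condition (1) via the standard slope seesaw. So essentially nothing needs to be proved beyond recalling this identity.

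First I would handle the implication from \ta-semistability of $(E,s)$ to semistability of $E$: condition (1) with $\ta=\mu(E)$ reads $\mu(F)\le\mu(E)$ for every subbundle $F\sb E$, which is by definition semistability of $E$. For the converse, I would use the following standard observation. For any nonzero proper subbundle $F\sb E$, writing $r=\rk F+\rk(E/F)$ and $d=\deg F+\deg(E/F)$ and clearing denominators in
\[\mu(F)=\frac{\deg F}{\rk F},\qquad \mu(E/F)=\frac{\deg(E/F)}{\rk(E/F)},\qquad \mu(E)=\frac{d}{r},\]
one checks directly that $\mu(F)\le\mu(E)\iff\mu(E/F)\ge\mu(E)$. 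Consequently, if $E$ is semistable then $\mu(E/F)\ge\mu(E)=\ta$ for \emph{every} nonzero proper subbundle $F$, and in particular for those containing the section~$s$; this gives condition (2).

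The boundary cases $F=0$ and $F=E$ contribute only trivially to (1) and (2), so there is no real obstacle here. The whole argument is essentially a tautology once one recalls the slope seesaw; the only thing worth pointing out is that condition (2) is vacuous as an extra constraint in this chamber $\ta=d/r$, which is also the conceptual reason for the statement.
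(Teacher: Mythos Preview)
Your proof is correct and is essentially the same as the paper's: both directions use that when $\ta=\mu(E)$, condition (1) is literally semistability of $E$, and for the converse both you and the paper use $\mu(F)\le\mu(E)\Rightarrow\mu(E/F)\ge\mu(E)$ to verify (2). You are just slightly more explicit about the seesaw identity than the paper, which states the two inequalities without comment.
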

\begin{proof}
Assume that $E$ is semistable. Then for any subbundle $F\sb E$ we have $\mu(F)\le\ta=\mu(E)$. Therefore $E$ is semistable.

Assume that $E$ is semistable. Then for any subbundle $F\sb E$ we have $\mu(F)\le\mu(E)=\ta$ and $\mu(E/F)\ge\mu(E)=\ta$. Therefore $(E,s)$ is \ta-semistable.
\end{proof}

\begin{corollary}
\label{cr:sec vs pairs}
Assume that $\bk=\bF_q$ and $\ta=\frac dr$. Then
$$\sum_{(E,s)\in\bfM_\ta(r,d)}\frac{1}{\n{\Aut(E,s)}}
=\frac{1}{q-1}\sum_{E\in\bfM(r,d)}\frac{q^{h^0(X,E)}-1}{\n{\Aut E}}.$$
\end{corollary}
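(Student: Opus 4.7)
The plan is to reorganize the sum over isomorphism classes of $\ta$-semistable pairs as a double sum: first over the iso class of the underlying bundle, then over the iso classes of pairs built on a fixed bundle, handled via an orbit-counting identity.

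First, I would invoke the preceding lemma: since $\ta=\frac{d}{r}$, a pair $(E,s)$ with $\ch(E)=(r,d)$ is $\ta$-semistable iff $E$ is semistable. Thus the elements of $\bfM_\ta(r,d)$ are precisely the iso classes of pairs $(E,s)$ with $E\in\bfM(r,d)$ and $s\in H^0(X,E)\setminus\{0\}$, and I may replace the outer sum by a sum indexed by $E\in\bfM(r,d)$ together with an appropriate iso class of section.

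Next, I would fix a representative $E$ of each class in $\bfM(r,d)$ and set up the group action producing these iso classes. By the definition of a morphism of pairs, two pairs $(E,s)$ and $(E,s')$ are isomorphic iff there exists $(f_0,f_1)\in\bk^\times\xx\Aut(E)$ with $f_1 s=f_0 s'$, i.e.\ $s'=f_0\inv f_1 s$. Hence the group $G=\bk^\times\xx\Aut(E)$ acts on the finite set $S=H^0(X,E)\setminus\{0\}$ via $(f_0,f_1)\cdot s=f_0\inv f_1 s$, iso classes of pairs extending $E$ correspond to $G$-orbits in $S$, and the stabilizer of $s$ is exactly $\Aut(E,s)$ as defined.

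Finally, I would apply the standard orbit-counting identity
\[
\sum_{[s]\in S/G}\frac{1}{\n{\Aut(E,s)}}=\frac{\n S}{\n G}=\frac{q^{h^0(X,E)}-1}{(q-1)\n{\Aut E}},
\]
and sum this over $E\in\bfM(r,d)$ to recover the right-hand side of the claim. There is no real obstacle: the only subtle point is checking that the scalar factor $f_0\in\bk^\times$ coming from the nontrivial definition of morphisms of pairs is correctly absorbed into $G$, which accounts for the $\frac{1}{q-1}$ factor, and that the stabilizer in $G$ matches the automorphism group of the pair.
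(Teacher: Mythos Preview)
Your proof is correct and follows essentially the same approach as the paper: both fix $E\in\bfM(r,d)$, consider the action of $G=\bk^\times\times\Aut E$ on $H^0(X,E)\setminus\{0\}$, identify orbits with isomorphism classes of pairs and stabilizers with $\Aut(E,s)$, and apply the orbit-counting identity. If anything, you are slightly more explicit in invoking the preceding lemma and in writing down the action.
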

\begin{proof}
Let $E\in\bfM(r,d)$. There is a natural action of the group $G_E=\bk^*\xx\Aut E$ on the set $M_E=H^0(X,E)\ms\set0$. The orbits of this action can be identified with the isomorphism classes of pairs $(E,s)$. The stabilizer of $s\in M_E$ can be identified with $\Aut(E,s)$. Therefore
$$\sum_{[s]\in M_E/G_E}\frac{1}{\n{\Aut(E,s)}}
=\sum_{s\in M_E}\frac1{\n{G_E}}
=\frac{\n{M_E}}{\n{G_E}}
=\frac{q^{h^0(X,E)}-1}{(q-1)\n{\Aut E}}.$$
\end{proof}


\subsection{The category of triples}

\begin{definition}
Let $Q$ be the quiver with two vertices $0,1$ and one arrow $s:0\to1$. We consider $Q$ as a category and define the category $\cT_X$ of triples on $X$ as the category of functors from $Q$ to $\Coh X$.
This is an abelian category.
An object $E\in\cT_X$ can be represented as a triple $(E_1,E_0,s_E)$ where $E_0,E_1\in\Coh X$ and $s_E\in\Hom_{\cO_X}(E_0,E_1)$.
\end{definition}

\comm{
Usually we consider only triples of the form $(E_1,V\ts\cO_X,s_E)$, where $E_1$ is a vector bundle and $V$ is a finite-dimensional vector space. We call them B-triples. We denote a triple of the form $(E_1,0,0)$ by $E_1$ and a triple of the form $(0,V\ts\cO_X,0)$ by $V$.
}

\begin{theorem}[{ \cite[Theorem 4.1]{gothen_homological}}]
Let $E,F\in\cT_X$ be two triples on the curve $X$.
Then there is a long exact sequence
\begin{multline*}
0\to\Hom(E,F)\to\bop_{i=0,1}\Hom_{\cO_X}(E_i,F_i)\to\Hom_{\cO_X}(E_0,F_1)\to\Ext^1(E,F)\to\\
\bop_{i=0,1}\Ext^1_{\cO_X}(E_i,F_i)\to\Ext^1_{\cO_X}(E_0,F_1)\to\Ext^2(E,F)\to0.
\end{multline*}
\end{theorem}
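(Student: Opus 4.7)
The plan is to resolve $E$ in $\cT_X$ by a two-term complex built from the ``standard'' triples
\[\mathcal{I}(V):=(0,V,0),\qquad \cP(V):=(V,V,\mathrm{id}_V),\]
then apply $\Hom_{\cT_X}(-,F)$ and read off the desired six-term sequence from the induced long exact sequence of Ext groups.

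First, I would exhibit the short exact sequence
\[0\to\mathcal{I}(E_0)\to\cP(E_0)\oplus\mathcal{I}(E_1)\to E\to 0\]
in $\cT_X$, taking the left map to be $0$ at vertex $0$ and $(\mathrm{id},-s_E)^t\colon E_0\to E_0\oplus E_1$ at vertex $1$, and the right map to be $\mathrm{id}$ at vertex $0$ and $(s_E,\mathrm{id})\colon E_0\oplus E_1\to E_1$ at vertex $1$. Exactness at each vertex is immediate, and compatibility with the structural arrows of the three triples reduces to $(s_E,\mathrm{id})(\mathrm{id},0)^t=s_E$ and $(s_E,\mathrm{id})(\mathrm{id},-s_E)^t=0$.

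Second, I would identify the Ext groups of the standard triples. The natural isomorphisms
\[\Hom_{\cT_X}(\cP(V),F)=\Hom_{\cO_X}(V,F_0),\qquad \Hom_{\cT_X}(\mathcal{I}(V),F)=\Hom_{\cO_X}(V,F_1)\]
exhibit $\cP$ and $\mathcal{I}$ as left adjoints to the exact evaluation functors $F\mto F_0$ and $F\mto F_1$. Both $\cP$ and $\mathcal{I}$ are themselves exact, so their right adjoints preserve injectives; working in triples of quasi-coherent sheaves on $X$ to ensure enough injectives, the adjunctions derive to
\[\Ext^i_{\cT_X}(\cP(V),F)=\Ext^i_{\cO_X}(V,F_0),\qquad\Ext^i_{\cT_X}(\mathcal{I}(V),F)=\Ext^i_{\cO_X}(V,F_1)\]
for all $i\ge 0$.

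Finally, applying $\Hom_{\cT_X}(-,F)$ to the short exact sequence and substituting these identifications produces the claimed long exact sequence; the induced connecting map in degree $0$ is $(\vi_0,\vi_1)\mto s_F\vi_0-\vi_1 s_E$, whose kernel recovers $\Hom_{\cT_X}(E,F)$, and the analogous map appears in degree $1$. The sequence terminates at $\Ext^2(E,F)\to 0$ because the immediately following terms are $\Ext^2_{\cO_X}$-groups of coherent sheaves on $X$, which vanish since $X$ is a smooth curve. The one ingredient that really requires thought is choosing the resolution with the correct signs so that the connecting maps come out in the expected form; beyond that the proof is just formal adjunction combined with the cohomological dimension of $X$.
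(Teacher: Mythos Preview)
Your argument is correct. The paper does not prove this theorem at all; it is simply quoted from Gothen--King \cite{gothen_homological}, so there is no proof in the paper to compare with. Your resolution-and-adjunction approach is the standard one and is essentially how Gothen--King prove it (in greater generality for twisted quiver sheaves): build the two-step functorial resolution of $E$ by the ``induction'' and ``extension by zero'' functors along the arrow of the quiver, identify their Ext groups with sheaf Ext via the adjunctions, and truncate using $\dim X=1$.

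Two small remarks. First, your ordering of components in a triple is $(E_0,E_1,s_E)$, whereas the paper writes $(E_1,E_0,s_E)$; this is harmless but worth flagging so your $\mathcal{I}(V)=(0,V,0)$ is not misread as having $V$ at vertex~$0$. Second, the passage to quasi-coherent triples to obtain enough injectives is fine, but you should note (or cite) that the resulting Ext groups agree with those computed in the coherent category, e.g.\ because the evaluation functors are exact and the relevant Ext groups are already finite-dimensional; alternatively, one can avoid this by observing that your two-term resolution shows $\cP(E_0)\oplus\mathcal{I}(E_1)$ and $\mathcal{I}(E_0)$ are $\Hom(-,F)$-acyclic enough to compute $\Ext^{\le 2}(E,F)$ directly.
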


The following results about the vanishing of $\Ext^2$ in the category of triples are crucial for this paper. They will allow us to apply a Hall algebra formalism for the computation of motivic invariants.

\begin{proposition}
Let $E,F$ be two triples and assume that $s_E:E_0\to E_1$ is a monomorphism. Then $\Ext^2(E,F)=0$.
\end{proposition}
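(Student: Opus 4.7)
The plan is to read off $\Ext^2(E,F)$ from the long exact sequence of Gothen quoted just above the proposition and show that the penultimate map is surjective. The sequence ends with
\[
\bop_{i=0,1}\Ext^1_{\cO_X}(E_i,F_i)\arr^{\vi}\Ext^1_{\cO_X}(E_0,F_1)\to\Ext^2(E,F)\to0,
\]
so it suffices to check that $\vi$ is surjective. Since $\vi$ is built out of the two structure maps $s_E$ and $s_F$ (roughly, $(\al_0,\al_1)\mapsto \al_1\circ s_E - s_F\circ\al_0$), restricting to the $\Ext^1_{\cO_X}(E_1,F_1)$ summand already gives the pullback map
\[
s_E^*\colon \Ext^1_{\cO_X}(E_1,F_1)\to\Ext^1_{\cO_X}(E_0,F_1),
\]
and it will be enough to prove that $s_E^*$ alone is surjective.

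This is exactly where the monomorphism hypothesis comes in. Because $s_E\colon E_0\to E_1$ is injective in $\Coh X$, I would form the short exact sequence
\[
0\to E_0\arr^{s_E} E_1\to \coker(s_E)\to 0
\]
and apply $\Hom_{\cO_X}(-,F_1)$. The resulting long exact sequence contains
\[
\Ext^1_{\cO_X}(E_1,F_1)\arr^{s_E^*}\Ext^1_{\cO_X}(E_0,F_1)\to\Ext^2_{\cO_X}(\coker(s_E),F_1).
\]
Since $X$ is a smooth projective curve, the category $\Coh X$ has global (cohomological) dimension $1$, so the rightmost group vanishes and $s_E^*$ is surjective.

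Combining the two steps, $\vi$ is surjective, and the exactness of Gothen's sequence forces $\Ext^2(E,F)=0$. The only step requiring care is identifying the restriction of $\vi$ to the summand $\Ext^1_{\cO_X}(E_1,F_1)$ with the pullback $s_E^*$; once that is in place the argument is a one-line application of the dimension of $X$.
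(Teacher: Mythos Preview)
Your argument is correct and follows the same reduction as the paper: use the Gothen long exact sequence to reduce $\Ext^2(E,F)=0$ to surjectivity of $s_E^*\colon \Ext^1_{\cO_X}(E_1,F_1)\to\Ext^1_{\cO_X}(E_0,F_1)$. The only difference is in the final step: the paper applies Serre duality to turn this surjectivity into the evident injectivity of $\Hom_{\cO_X}(F_1,E_0\otimes\omega_X)\to\Hom_{\cO_X}(F_1,E_1\otimes\omega_X)$, whereas you use the short exact sequence $0\to E_0\to E_1\to\coker(s_E)\to 0$ together with $\Ext^2_{\cO_X}=0$ on a smooth curve. These are essentially two sides of the same coin (both encode that $\Coh X$ has homological dimension one), so the approaches are equivalent; yours avoids invoking duality explicitly, the paper's avoids writing down the extra exact sequence.
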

\begin{proof}
According to the previous theorem it is enough to show that
$$\Ext^1_{\cO_X}(E_1,F_1)\to\Ext^1_{\cO_X}(E_0,F_1)$$
is surjective. By Serre duality this is equivalent to the injectivity of
$$\Hom_{\cO_X}(F_1,E_0\ts\om_X)\to\Hom_{\cO_X}(F_1,E_1\ts\om_X)$$
which holds as $s_E:E_0\to E_1$ is a monomorphism.
\end{proof}

\begin{corollary}
\label{cr:vanishing}
Let $E,F$ be two triples and assume that one of the following conditions is satisfied
\begin{enumerate}
	\item $E_0=0$.
	\item $E_0=\cO_X$ and $s_E\ne0$.
\end{enumerate}
Then $\Ext^2(E,F)=0$.
\end{corollary}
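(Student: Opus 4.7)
The plan is to reduce both cases of the corollary to the preceding proposition, which asserts $\Ext^2(E,F)=0$ whenever $s_E\colon E_0\to E_1$ is a monomorphism in $\Coh X$. In each case I only need to verify this monomorphism condition, after which the proposition applies directly.

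For case (1), where $E_0=0$, the morphism $s_E$ is trivially a monomorphism (the zero map out of the zero object), so nothing further is needed.

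For case (2), with $E_0=\cO_X$ and $s_E\ne 0$, I would argue as follows, interpreting the statement in the intended setting where $E_1$ is a vector bundle (and in particular torsion-free). Let $K=\ker s_E\subset\cO_X$. Since $\cO_X$ is torsion-free of rank $1$ on the smooth curve $X$, $K$ is torsion-free of generic rank $0$ or $1$. If $\rk K=1$, then $K=\cO_X(-D)$ for some effective divisor $D\ge 0$ and $s_E$ factors through the torsion sheaf $\cO_X/\cO_X(-D)$; since $E_1$ is torsion-free this factorization forces $s_E=0$, contradicting the hypothesis. Hence $\rk K=0$, which makes $K$ a torsion subsheaf of the torsion-free sheaf $\cO_X$, forcing $K=0$. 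Thus $s_E$ is a monomorphism and the proposition applies.

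No real obstacle is anticipated: the entire argument rests on the elementary observation that on a smooth curve any nonzero morphism out of $\cO_X$ into a torsion-free sheaf is injective.
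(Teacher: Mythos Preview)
Your proof is correct and follows exactly the approach the paper intends: the corollary is stated without proof immediately after the proposition, and the implicit argument is precisely to verify that $s_E$ is a monomorphism in each case and then invoke the proposition. Your careful justification in case~(2)---noting that injectivity of a nonzero $s_E:\cO_X\to E_1$ requires $E_1$ to be torsion-free---makes explicit a hypothesis the paper leaves tacit; this hypothesis is indeed satisfied in every application of the corollary in the paper (for instance, in Remark~\ref{rk:ext is zero} the relevant $(E/E')_1$ lies in $\cA_{<\ta}$ and hence has no torsion).
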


\begin{definition}
For any $\si\in\bR$ and for any triple $E=(E_1,E_0,s_E)$ we define the \si-slope of $E$ by
$$\mu_\si(E)=\frac{\deg E_1+\deg E_0+\si\rk E_0}{\rk E_1+\rk E_0}\in\bR\cup\set\infty.$$
A triple $E$ is called semistable (resp.\ stable) with respect to $\mu_\si$ if for any proper nonzero subobject $F\sb E$ we have $\mu_\si(F)\le\mu_\si(E)$ (resp.\ $\mu_\si(F)<\mu_\si(E)$). 
\end{definition}

\comm{
\begin{definition}
Let $(r,d)\in\bZ_{>0}\xx\bZ$ and $\si\in\bR$. Denote by $\cN_\si(r,d)$ the moduli stack of $\si$-semistable $B$-triples $E=(E_1,\cO_X,s_E)$ with $s_E\ne0$ and $\ch(E_1)=(r,d)$. Similarly define the moduli space $N_\si(r,d)$ and the set of isomorphism classes $\bfN_\si(r,d)$.
\end{definition}
\begin{remark}
Note that we explicitly require that $s_E\ne0$ in the above definition. If $r,d,\si$ are as above and $E=(E_1,\cO_X,s_E)$ is \si-semistable with $E_1\ne0$ and $\ch(E_1)=(r,d)$ then $\si\ge \frac dr$. Moreover, if $\si>\frac dr$ or $E$ is \si-stable, then $s_E\ne0$.
\end{remark}
\begin{lemma}
Let $E_1$ be a vector bundle with $\ch E_1=(r,d)$. Let $\ta\in\bR$ and $\si=(r+1)\ta-d$. Then a pair $(E_1,s)$ is \ta-semistable if and only if the triple $(E_1,\cO_X,s)$ is semistable with respect to $\mu_\si$.
\end{lemma}
}

\section{Wall-crossing}

\subsection{Framed categories}

\begin{definition}
A framed category is a pair $(\cA,v)$, where $\cA$ is an abelian category and $v:K_0(\cA)\to\bZ$ is a group homomorphism such that $v(E)\ge0$ for any $E\in\cA$. For any $k\ge0$ we denote by $\cA_k$ the category of objects $E\in\cA$ with $v(E)=k$. The objects of the abelian category $\cA_0$ are called unframed objects. The objects of the category $\cA_1$ are called framed objects.
\end{definition} 

\begin{remark}
\label{rk:max unframed}
We assume that for any object $E\in\cA$ there exists a maximal unframed subobject $E_1\sb E$. Similarly, we assume that there exists a maximal unframed quotient $E\to E_2$.
\end{remark}

\begin{definition}
Let $(\cT,\cF)$ be a torsion pair on the category $\cA_0$ of unframed objects. A framed object $E\in\cA_1$ is called 
\begin{enumerate}
	\item $(\cT,\cF)$-stable if $E_1\in\cF$ and $E_2\in\cT$.
	\item $+\infty$-stable if it is $(0,\cA_0)$-stable, \ie if $E_2=0$.
	\item $-\infty$-stable if it is $(\cA_0,0)$-stable, \ie if $E_1=0$.
\end{enumerate}
\end{definition}

\begin{proposition}[Canonical filtration]
\label{pr:canonical filt}
Any framed object $E\in\cA_1$ has a unique filtration
$$E'\sb E''\sb E$$
such that $E'\sb\cT$, $E''/E'\in\cA_1$ is $(\cT,\cF)$-stable, and $E/E''\in\cF$.
\end{proposition}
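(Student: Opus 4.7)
The plan is to construct the filtration in two stages using the torsion pair, first truncating by the torsion part of the maximal unframed subobject $E_1\sb E$ (which exists by Remark~\ref{rk:max unframed}), and then enlarging to include the preimage of the torsion part of the maximal unframed quotient of the resulting quotient. Throughout I shall use the standard torsion-pair facts that $\cT\cap\cF=0$, that $\cF$ is closed under subobjects, and that $\cT$ is closed under quotients.

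For existence, decompose $E_1$ in the torsion pair as $0\to T_1\to E_1\to F_1\to 0$ with $T_1\in\cT$ and $F_1\in\cF$, and set $E':=T_1\in\cT$. Since $v$ is additive and $v(T_1)=0$, the quotient $\bar E:=E/E'$ still lies in $\cA_1$; moreover its maximal unframed subobject equals $E_1/T_1\iso F_1\in\cF$, because any unframed subobject of $\bar E$ pulls back to an unframed subobject of $E$ containing $E'$, which must sit inside $E_1$. Now let $\bar E_2$ be the maximal unframed quotient of $\bar E$ and decompose $0\to T_2\to\bar E_2\to F_2\to 0$ in the torsion pair. Define $E''$ as the preimage in $E$ of $T_2\sb\bar E_2$ under $E\twoheadrightarrow\bar E\twoheadrightarrow\bar E_2$. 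Then $E'\sb E''$ (since $E'$ is killed in $\bar E$), $E/E''\iso F_2\in\cF$, and $v(E''/E')=1$. The maximal unframed subobject of $E''/E'$ embeds in $F_1\in\cF$, hence lies in $\cF$; for the maximal unframed quotient $Q$ of $E''/E'$, the pushout of $E''\to Q$ along $E''\hookrightarrow\bar E$ is an extension of $F_2$ by $Q$, hence unframed, so it factors through $\bar E_2$, forcing $Q$ to be a quotient of the image $T_2$ of $E''$ in $\bar E_2$, and in particular $Q\in\cT$.

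For uniqueness, suppose $E'\sb E''\sb E$ satisfies the three conditions. As an unframed subobject of $E$, $E'\sb E_1$; being in $\cT$, in fact $E'\sb T_1$. Conversely, $T_1\hookrightarrow E\twoheadrightarrow E/E''$ is a morphism from a $\cT$-object to an $\cF$-object, hence zero, so $T_1\sb E''$; the image of $T_1$ in $E''/E'$ lies in the maximal unframed subobject of $E''/E'$, which by $(\cT,\cF)$-stability is in $\cF$, so this image vanishes by $\cT\cap\cF=0$, giving $T_1\sb E'$. Thus $E'=T_1$. A symmetric argument carried out in $\bar E=E/E'$, using that the maximal unframed quotient of $E''/E'$ lies in $\cT$ and that $E/E''\in\cF$, pins down $E''$ as the preimage of $T_2$.

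The main obstacle is the asymmetry of the two universal objects: the maximal unframed subobject $E_1\sb E$ and the kernel of the maximal unframed quotient $E\to E_2$ need not be comparable in $E$, so one cannot simply intersect them. The fix is exactly the two-stage construction above — quotienting by $E'=T_1$ before extracting the maximal unframed quotient — which is what makes the pushout argument for the $\cT$-property of the maximal unframed quotient of $E''/E'$ go through cleanly.
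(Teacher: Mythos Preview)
Your proof is correct and follows essentially the same construction as the paper: $E'$ is the torsion part of $E_1$, and $E''$ is the kernel of the projection to the free part of the maximal unframed quotient. The paper's proof is two lines---it simply names $E'$ and $E''$ and leaves all verification, including uniqueness, to the reader---so your detailed checks are a genuine addition rather than a different argument.

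One remark on your final paragraph: the ``obstacle'' you identify is not actually there. Since $E'\in\cT$ maps to zero in any object of $\cF$, the map $E\to E_2^f$ already factors through $\bar E=E/E'$, and one checks that $E_2^f$ is precisely the free part $F_2$ of your $\bar E_2$ (the surjection $E_2\twoheadrightarrow\bar E_2$ has kernel the image of $E'$, which lies in the torsion part). Hence the paper's direct definition $E''=\ker(E\to E_2\to E_2^f)$ coincides with yours, and the preliminary quotient by $E'$ is a convenience for the verification rather than a necessary fix. Also, in the pushout sentence ``the pushout of $E''\to Q$ along $E''\hookrightarrow\bar E$'' you mean $E''/E'$ in both places.
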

\begin{proof}
Let $E$ be a framed object. We define $E'\sb E_1\sb E$ to be the torsion part of $E_1$ and we define $E''=\ker(E\to E_2\to E_2^f)$, where $E_2\to E_2^f$ is the free part of $E_2$. Uniqueness is left to the reader.
\end{proof}

\subsection{Framed category of triples}
Let $X$ be a curve. Let $\ang{\cO_X}$ be the subcategory of $\Coh X$ generated from $\cO_X$ by extensions. One can easily see that it is an abelian subcategory of $\Coh X$.
%
We define the category $\cA$ to be the category of triples $E=(E_1,E_0,s_E)$ such that $E_0\in\ang{\cO_X}$. We define the framing $v:K_0(\cA)\to\bZ$ by $v(E)=\rk E_0$. Then the category $\cA_0$ of unframed objects can be identified with the category $\Coh X$. Framed objects in $\cA$ have the form $(E_1,\cO_X,s_E)$, where $E_1\in\Coh X$ and $s_E\in\Hom_{\cO_X}(\cO_X,E_1)\iso H^0(X,E_1)$. 

\begin{remark}
If $E=(E_1,E_0,s_E)\in\cA$ then the maximal unframed subobject of $E$ is $(E_1,0,0)$ which we denote by $E_1$. The maximal unframed quotient of $E$ is $(\coker s_E,0,0)$ which we denote by $E_2$. This is in accordance with the conventions of Remark \ref{rk:max unframed}.
\end{remark}

\begin{definition}
\label{df:ta stable triple}
Let $\ta\in\bR$.
A framed object $E\in\cA_1$ is called \ta-semistable (resp.\ stable) if
\begin{enumerate}
	\item For any monomorphism $F\to E$ with unframed $F$ we have $\mu(F)\le \ta$ (resp.\ $<\ta$).
	\item For any epimorphism $E\to F$ with unframed $F$ we have $\mu(F)\ge \ta$ (resp.\ $>\ta$).
\end{enumerate}
A framed object $E\in\cA_1$ is called $\ta_+$-stable (resp.\ $\ta_-$-stable) if $E$ is $(\ta+\eps)$-semistable (resp.\ $(\ta-\eps)$-semistable) for $0<\eps\ll1$. This means that
\begin{enumerate}
	\item For any monomorphism $F\to E$ with unframed $F$ we have $\mu(F)\le \ta$ (resp.\ $<$).
	\item For any epimorphism $E\to F$ with unframed $F$ we have $\mu(F)>\ta$ (resp.\ $\ge$).
\end{enumerate}
\end{definition}

\begin{remark}
It is clear from Definitions \ref{df:ta stable pair},\ref{df:ta stable triple} that a pair $(E,s)$ is $\ta$-semistable if and only if the triple $(E,\cO_X,s)$ is \ta-semistable. Therefore the stack $\cM_\ta(r,d)$ can be identified with the moduli stack of framed \ta-semistable triples $E=(E_1,\cO_X,s_E)$ with $\ch E=(r,d)$ and $s_E\ne0$. In the next lemma we will see that the last condition $s_E\ne0$ is automatically satisfied for almost all \ta.
\end{remark}

\begin{lemma}
Let $E=(E_1,\cO_X,s_E)$ be a framed \ta-semistable object with $E_1\ne0$. Then $\mu(E_1)\le\ta$. If $\mu(E_1)<\ta$ then $s_E\ne0$.
\end{lemma}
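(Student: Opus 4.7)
The plan is to apply Definition \ref{df:ta stable triple} directly, testing it against two canonical sub/quotient objects built from the components of $E$.

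For the first assertion $\mu(E_1)\le\ta$, I would exhibit $(E_1,0,0)$ as an unframed subobject of $E=(E_1,\cO_X,s_E)$. The candidate morphism has components $\id_{E_1}:E_1\to E_1$ at the target vertex and $0:0\to\cO_X$ at the source vertex; the compatibility $s_E\circ 0 = \id_{E_1}\circ 0$ is trivial, both components are injective, so this is a monomorphism in $\cA$ with unframed domain. Since $E$ is \ta-semistable, condition (1) of Definition \ref{df:ta stable triple} yields $\mu(E_1)\le\ta$.

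For the second assertion, I would argue by contraposition. Suppose $s_E=0$. Then $\coker s_E=E_1$, so by the remark identifying the maximal unframed quotient we have $E_2=(E_1,0,0)$, and the quotient $E\twoheadrightarrow E_2$ (with components $\id_{E_1}$ and $\cO_X\to 0$) is an epimorphism with unframed target. Condition (2) of \ta-semistability then forces $\mu(E_1)=\mu(E_2)\ge\ta$. Contrapositively, $\mu(E_1)<\ta$ implies $s_E\ne 0$.

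There is no real obstacle here; both parts are immediate consequences of the definition once one correctly identifies the obvious unframed sub- and quotient object. The only subtlety is purely bookkeeping: one must verify that the proposed componentwise morphisms are indeed morphisms of triples (the square commutes trivially since one of the legs is zero) and that mono/epi in the functor category $\cT_X$ can be tested componentwise, which is standard for categories of diagrams in an abelian category.
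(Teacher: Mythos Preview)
Your proof is correct and follows exactly the paper's approach: the paper's own argument is simply the terse two-line version of what you wrote, observing that $E_1$ is an unframed subobject (giving $\mu(E_1)\le\ta$) and that if $s_E=0$ then $E_1$ is also an unframed quotient (giving $\mu(E_1)\ge\ta$). Your added detail about why the componentwise maps are mono/epi in the diagram category is valid and makes the argument more self-contained.
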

\begin{proof}
Since $E_1$ is an unframed subobject of $E$, we have $\mu(E_1)\le\ta$. If $s_E=0$ then $E_1$ is an unframed quotient of $E$. Therefore $\mu(E_1)\ge\ta$, contradicting our assumption.
\end{proof}

\begin{lemma}
Let $E\in\cA_1$ be a framed object with $E_1\ne0$. Let $\ch E_1=(r,d)$ and $\ta\in\bR$. Then $E$ is \ta-semistable if and only if it is semistable with respect to $\mu_\si$, where $\si=(r+1)\ta-d$.
\end{lemma}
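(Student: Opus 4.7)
The plan is to directly match $\mu_\si$-semistability with the two conditions of Definition \ref{df:ta stable triple}, by splitting the set of proper non-zero subobjects of $E$ into a framed and an unframed part and exploiting the ``seesaw'' additivity of $\mu_\si$.

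First, I would compute the slope of $E$ itself. Since $E_0=\cO_X$, one has $\rk E_0=1$, $\deg E_0=0$, so
$$\mu_\si(E)=\frac{d+0+\si}{r+1}=\ta,$$
exactly when $\si=(r+1)\ta-d$. Thus the reference slope is $\ta$ in both pictures.

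Second, I would classify proper non-zero subobjects $F\subset E$ in $\cA$. The framing component $F_0$ is a subobject of $E_0=\cO_X$ inside $\ang{\cO_X}$. Any non-zero proper subsheaf of $\cO_X$ is an ideal sheaf of strictly negative degree, hence cannot lie in the extension-closure of $\cO_X$; so $F_0$ is either $0$ or $\cO_X$. This splits subobjects into \emph{unframed} ones (with $F_0=0$, i.e.\ $F=(F_1,0,0)$) and \emph{framed} ones (with $F_0=\cO_X$, in which case $s_E$ factors through $F_1\hookrightarrow E_1$ and $E/F=(E_1/F_1,0,0)$ is unframed). Conversely, since $E$ has $\rk E_0=1$, every unframed quotient of $E$ arises as $E/F$ for a unique framed subobject $F=\ker(E\twoheadrightarrow Q)$.

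Third, I would match the stability conditions:

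\textbf{Unframed subobjects.} Here $\mu_\si(F)=\mu(F_1)=\mu(F)$, so $\mu_\si(F)\le\mu_\si(E)$ reads $\mu(F)\le\ta$, which is precisely condition (1) of Definition~\ref{df:ta stable triple}.

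\textbf{Framed subobjects.} From additivity of the numerator and denominator defining $\mu_\si$ on a short exact sequence $0\to F\to E\to E/F\to0$, the slope $\mu_\si(E)$ is a convex combination of $\mu_\si(F)$ and $\mu_\si(E/F)$; hence $\mu_\si(F)\le\mu_\si(E)$ iff $\mu_\si(E/F)\ge\mu_\si(E)=\ta$. As $E/F$ is unframed, $\mu_\si(E/F)=\mu((E/F)_1)=\mu(E_1/F_1)$, and combined with the bijection ``framed subobjects $\leftrightarrow$ unframed quotients'' this is exactly condition (2) of Definition~\ref{df:ta stable triple}. The strict inequalities give the analogous statement for stability.

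The only non-completely-routine step is the classification in Step 2; once one accepts that $\ang{\cO_X}$ contains no non-trivial proper subobjects of $\cO_X$, the rest is a slope-average computation. I expect no real obstacle beyond being careful that the correspondence between framed subobjects of $E$ and unframed quotients of $E$ is bijective, which follows from $\rk E_0=1$.
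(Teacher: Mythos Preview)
Your proof is correct and follows essentially the same approach as the paper's: compute $\mu_\si(E)=\ta$, then reduce $\mu_\si$-semistability to checking unframed subobjects (giving condition (1)) and unframed quotients (giving condition (2)). The paper's proof is a terse two-line version of your argument; you have simply made explicit the classification $F_0\in\{0,\cO_X\}$ and the seesaw step that the paper leaves implicit.
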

\begin{proof}
We note first that $\mu_\si(E)=\frac{d+\si}{r+1}=\ta$. Therefore $E$ is semistable with respect to $\mu_\si$ if and only if for any unframed $F\sb E$ we have $\mu(F)=\mu_\si(F)\le\mu_\si(E)=\ta$ and for any unframed quotient $E\to F$ we have $\mu(F)=\mu_\si(F)\ge\mu_\si(E)=\ta$. This is equivalent to \ta-stability of $E$.
\end{proof}

Define the category $\cA_{\ge\ta}$ to be the category of sheaves $E\in\cA_0=\Coh X$ such that the quotients of their Harder-Narasimhan filtration have slope $\ge\ta$. Similarly we define the categories $\cA_{\le\ta},\cA_{>\ta},\cA_{<\ta}$. The pairs of categories $(\cA_{>\ta},\cA_{\le\ta})$ and $(\cA_{\ge\ta},\cA_{<\ta})$ are torsion pairs in $\cA_0$.

\begin{lemma}
Let $E\in\cA_1$ be a framed object. Then
\begin{enumerate}
	\item $E$ is $\ta$-semistable \iff $E_1\in\cA_{\le\ta}$, $E_2\in\cA_{\ge\ta}$.
	\item $E$ is $\ta$-stable \iff $E_1\in\cA_{<\ta}$, $E_2\in\cA_{>\ta}$.
	\item $E$ is $\ta_+$-stable \iff 
	$E_1\in\cA_{\le\ta}$, $E_2\in\cA_{>\ta}$\iff
	$E$ is $(\cA_{>\ta},\cA_{\le \ta})$-stable.
	\item $E$ is $\ta_-$-stable \iff 
	$E_1\in\cA_{<\ta}$, $E_2\in\cA_{\ge\ta}$\iff
	$E$ is $(\cA_{\ge \ta},\cA_{< \ta})$-stable.
\end{enumerate}
\end{lemma}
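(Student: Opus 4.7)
The plan is to reduce all four parts to a uniform template by exploiting two ingredients: first, the characterization of the slope-bounded categories via sub/quotient slopes, and second, the universal factorization property of the maximal unframed sub- and quotient objects $E_1, E_2$ of a framed object $E$.

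For the first ingredient, I would record that a sheaf $F$ lies in $\cA_{\le \ta}$ iff every nonzero subsheaf $F'\sb F$ satisfies $\mu(F')\le\ta$, and dually $F\in\cA_{\ge\ta}$ iff every nonzero quotient $F\to F'$ satisfies $\mu(F')\ge\ta$; this is immediate from the HN filtration, since the maximal slope of a subsheaf is the slope of the first HN piece, while the minimal slope of a quotient is the slope of the last HN piece. The strict versions characterize $\cA_{<\ta}$ and $\cA_{>\ta}$ identically. For the second ingredient, I would note that any unframed subobject $F\to E$ in $\cA$ factors uniquely through $E_1\sb E$, and any unframed quotient $E\to F$ factors uniquely through $E\to E_2$, so that the sub/quotient conditions of $\ta$-semistability can be tested entirely on $E_1$ and $E_2$.

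Granted these two observations, (1) is essentially a translation: the forward direction is immediate, since sub/quotient sheaves of $E_1, E_2$ give unframed sub/quotient objects of $E$; conversely, the factorization property turns the conditions $E_1\in\cA_{\le\ta}$ and $E_2\in\cA_{\ge\ta}$ into the defining inequalities of \ta-semistability. Part (2) is identical with strict inequalities throughout. For (3), I would apply part (1) at $\ta+\eps$ for $0<\eps\ll1$: for every subsheaf $F\sb E_1$ the bound $\mu(F)\le\ta+\eps$ valid for all small $\eps$ forces $\mu(F)\le\ta$, giving $E_1\in\cA_{\le\ta}$; while every quotient $E_2\to F$ satisfies $\mu(F)\ge\ta+\eps>\ta$, hence $E_2\in\cA_{>\ta}$. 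The second equivalence in (3) is then just the definition of $(\cA_{>\ta},\cA_{\le\ta})$-stability. Part (4) is symmetric, using $\ta-\eps$.

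The only step requiring a moment of thought is the limiting argument for $\ta_\pm$-stability; however, this is harmless because the implication $\mu(F)\le\ta+\eps$ for all small $\eps>0$ immediately yields $\mu(F)\le\ta$ individually for each $F$, with no need to invoke discreteness of slopes for bounded rank. Thus the lemma reduces to careful bookkeeping once the two ingredients above are in place, and there is no genuine obstacle.
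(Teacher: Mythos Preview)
Your proposal is correct, and the paper in fact states this lemma without proof, so there is nothing to compare against. One small remark: the paper already unpacks the conditions for $\ta_\pm$-stability explicitly in Definition~\ref{df:ta stable triple} (the sentence beginning ``This means that\ldots''), so once you have your two ingredients, parts (3) and (4) follow from the same factorization argument as (1) and (2) without needing the limiting step at all; your $\eps$-argument is fine but slightly more than necessary. Also note that the converse direction in (3) and (4) (from the conditions on $E_1,E_2$ back to $\ta_\pm$-stability) deserves a word: one uses that $E_2\in\cA_{>\ta}$ forces $\mu_{\min}(E_2)>\ta$, so $E_2\in\cA_{\ge\ta+\eps}$ for all sufficiently small $\eps>0$, which together with $E_1\in\cA_{\le\ta}\subset\cA_{\le\ta+\eps}$ gives $(\ta+\eps)$-semistability via part (1).
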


The unique filtration of a framed object $E\in\cA_1$ with respect to the torsion pair $(\cA_{>\ta},\cA_{\le \ta})$ (see Prop.~\ref{pr:canonical filt}) will be called the canonical filtration with respect to $\ta_+$. 
The unique filtration of $E$ with respect to the torsion pair $(\cA_{\ge \ta},\cA_{< \ta})$ will be called the canonical filtration with respect to $\ta_-$.

\begin{lemma}
\label{lm:wc}
Let $E\in\cA$ be a framed $\ta$-semistable object with $s_E\ne0$. Let $\ch(E_1)=(r,d)$ and $\si=(r+1)\ta-d$ (\ie $\mu_\si(E)=\ta$). Then
\begin{enumerate}
	\item The canonical filtration of $E$ with respect to $\ta_+$ has the form $0=E'\sb E''\sb E$. If $E''\ne E$ then $E/E''$ is semistable and $\mu(E/E'')=\mu_\si(E'')=\ta$. If $s_E\ne0$ then $s_{E''}\ne0$.
	\item The canonical filtration of $E$ with respect to $\ta_-$ has the form $E'\sb E''=E$. If $E'\ne0$ then $E'$ is semistable and $\mu(E')=\mu_\si(E/E')=\ta$.
	If $\mu(E_1)<\ta$ then $s_{E/E'}\ne0$.
\end{enumerate}
\end{lemma}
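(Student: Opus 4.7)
The plan is to apply the construction of the canonical filtration in Proposition~\ref{pr:canonical filt} verbatim, feeding in the two torsion pairs $(\cA_{>\ta},\cA_{\le\ta})$ and $(\cA_{\ge\ta},\cA_{<\ta})$, and using the preceding lemma which characterises $\ta$-semistability as $E_1\in\cA_{\le\ta}$ and $E_2\in\cA_{\ge\ta}$. For part (1), the torsion part of $E_1$ with respect to $(\cA_{>\ta},\cA_{\le\ta})$ vanishes because $E_1\in\cA_{\le\ta}$ is already torsion-free, so $E'=0$. The unframed quotient $E/E''=E_2^f$ is the torsion-free part of $E_2\in\cA_{\ge\ta}$, whose HN factors then lie simultaneously below and above $\ta$, forcing $E_2^f$ to be semistable of slope exactly $\ta$ when nonzero. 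Since $E/E''$ is unframed, $\mu_\si(E/E'')=\mu(E/E'')=\ta$, and additivity along $0\to E''\to E\to E/E''\to 0$ together with $\mu_\si(E)=\ta$ then yields $\mu_\si(E'')=\ta$. The statement $s_{E''}\ne 0$ follows because $E''$ must contain the framing $\cO_X$ of $E$ (the quotient being unframed), so $s_{E''}=s_E$.

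Part (2) is symmetric but for one extra step. With the torsion pair $(\cA_{\ge\ta},\cA_{<\ta})$, now $E_2\in\cA_{\ge\ta}$ is entirely torsion, so $E_2^f=0$ and $E''=E$. The torsion part $E'\sb E_1$ collects precisely the HN factors of $E_1$ of slope exactly $\ta$, hence is semistable of slope $\ta$ when nonzero, and $\mu_\si(E/E')=\ta$ follows again from additivity plus the fact that $\mu_\si(E')=\mu(E')=\ta$ by unframedness.

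The only step requiring a genuine argument is the implication ``$\mu(E_1)<\ta \Rightarrow s_{E/E'}\ne 0$,'' which I would establish by contradiction. If $s_{E/E'}=0$ then $\im(s_E)\sb E'$, so $F=(E',\cO_X,s_E)$ is a \emph{framed} subobject of $E$. Writing $\ch E'=(r',r'\ta)$ and $\si=(r+1)\ta-d$, a short computation gives
\[
\mu_\si(F)-\mu_\si(E)=\frac{r'\ta+\si}{r'+1}-\ta=\frac{r\ta-d}{r'+1},
\]
which is strictly positive exactly because $d/r<\ta$. This contradicts $\mu_\si$-semistability of $E$ (equivalent to $\ta$-semistability by the preceding lemma). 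This destabilisation step is the only delicate point of the lemma; everything else is bookkeeping with HN filtrations and torsion pairs, the main subtlety being to recognise that once $\im(s_E)$ lands in $E'$ one obtains a \emph{framed} subobject, to which the $\mu_\si$-semistability inequality applies with the strict hypothesis $\mu(E_1)<\ta$ producing a strict violation.
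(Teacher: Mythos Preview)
Your argument is correct. For the structural claims about the canonical filtrations in (1) and (2) you follow exactly the paper's route: feed the two torsion pairs into Proposition~\ref{pr:canonical filt}, use the characterisation of $\ta$-semistability as $E_1\in\cA_{\le\ta}$, $E_2\in\cA_{\ge\ta}$, and read off that the relevant torsion/free pieces are either zero or semistable of slope exactly $\ta$. The paper argues the same way, only phrasing the vanishing of $E'$ (resp.\ $E/E''$) via $\mu_\si$ rather than via the lemma immediately preceding.

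The one genuine difference is in the final implication $\mu(E_1)<\ta\Rightarrow s_{E/E'}\ne0$. The paper argues as follows: $E/E'$ is $\ta_-$-stable, hence indecomposable; if $s_{E/E'}=0$ then $E/E'$ would split as $(E_1/E'_1,0,0)\oplus(0,\cO_X,0)$, so one needs $E_1/E'_1\ne0$, which follows since $E'_1$ (if nonzero) has slope $\ta>\mu(E_1)$. Your approach instead builds the framed subobject $F=(E',\cO_X,s_E)$ and computes directly that $\mu_\si(F)-\mu_\si(E)=(r\ta-d)/(r'+1)>0$, contradicting $\mu_\si$-semistability. Your argument is arguably cleaner: it avoids invoking the (easy but unstated) fact that stable objects are indecomposable, and the destabilising inequality makes the role of the strict hypothesis $\mu(E_1)<\ta$ completely transparent. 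The paper's version has the mild advantage of staying within the torsion-pair language throughout. Both are short and valid.
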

\begin{proof}
1. Consider the canonical filtration $E'\sb E''\sb E$ with respect to $\ta_+$. Then $E'\sb\cA_{>\ta}$, while $\mu_\si(E)=\ta$. This implies $E'=0$. We have $E/E''\in\cA_{\le\ta}$, while if $E/E''\ne0$ then $\mu(E/E'')\ge\ta$. Therefore $\mu(E/E'')=\ta=\mu_\si(E'')$.
If $s_E\ne0$ then automatically $s_{E''}\ne0$.\\
2. Consider the canonical filtration $E'\sb E''\sb E$ with respect to $\ta_-$. Then $E/E''\in\cA_{<\ta}$, but $\mu_\si(E)=\ta$, and if $E/E''\ne0$ then $\mu(E/E'')\ge\ta$. This implies $E/E''=0$ and $E''=E$. We have $E'\in\cA_{\ge\ta}$, while if $E'\ne0$ then $\mu(E')\le\ta$. Therefore $\mu(E')=\ta=\mu_\si(E/E')$.
Assume that $\mu(E_1)<\ta$. 
The framed object $E/E'$ is $\si_-$-stable and therefore is indecomposable. To prove that $s_{E/E'}\ne0$ we have to show that $E'_1\ne E_1$. If $E'_1=0$ then we are done. If $E'_1\ne0$ then $\mu(E'_1)=\ta>\mu(E_1)$, so $E'_1\ne E_1$. 
\end{proof}

\begin{remark}
\label{rk:ext is zero}
In the first case of the previous lemma we have $\Ext^2(E/E'',E'')=0$ as $E/E''$ is unframed (see Corollary \ref{cr:vanishing}). In the second case of the previous lemma
we have $\Ext^2(E/E',E')=0$ if $\mu(E_1)<\ta$ (as $s_{E/E'}\ne0$ and we can apply Corollary \ref{cr:vanishing}).
\end{remark}

\begin{lemma}
\label{lm:wc inverse}
Let $E\in\cA_1$ be a framed object, $E'\sb E$ and $E''=E/E'$.
\begin{enumerate}
	\item If $E'\in\cA_1$ is $\ta_+$-stable, $E''$ is semistable and $\mu(E'')=\ta$, then $E$ is \ta-semistable.
	\item If $E''\in\cA_1$ is $\ta_-$-stable, $E'$ is semistable and $\mu(E')=\ta$, then $E$ is \ta-semistable.
\end{enumerate}
\end{lemma}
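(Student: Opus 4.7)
The plan is to verify the two defining conditions of $\ta$-semistability from Definition \ref{df:ta stable triple} by decomposing every unframed subobject (resp.\ unframed quotient) of $E$ using the short exact sequence $0\to E'\to E\to E''\to 0$ and bounding the slopes on each piece. The preceding lemma characterising $\ta_\pm$-stability via the torsion pairs $(\cA_{>\ta},\cA_{\le\ta})$ and $(\cA_{\ge\ta},\cA_{<\ta})$ is the main input.

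For case (1), the characterisation gives $E'_1\in\cA_{\le\ta}$ and $E'_2\in\cA_{>\ta}$, while $E''$ is an unframed semistable sheaf of slope $\ta$. Take an unframed subobject $F\hookrightarrow E$. The intersection $F\cap E'$ is an unframed subobject of $E'$, hence contained in $E'_1$; since $\cA_{\le\ta}$ is closed under subobjects (it is the torsion-free part of the HN torsion pair), $\mu(F\cap E')\le\ta$ whenever nonzero. The second isomorphism theorem embeds $F/(F\cap E')$ into $E''$, so $\mu(F/(F\cap E'))\le\ta$ by semistability of $E''$. Hence $\mu(F)$, a weighted average of these two, satisfies $\mu(F)\le\ta$. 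Dually, for an unframed quotient $E\twoheadrightarrow F$, the image $G$ of $E'$ in $F$ is unframed, so by the universal property of the maximal unframed quotient it is a quotient of $E'_2\in\cA_{>\ta}$; closure of $\cA_{>\ta}$ under quotients gives $\mu(G)\ge\ta$ when $G\ne 0$. The cokernel $F/G$ is an unframed quotient of $E''$, so $\mu(F/G)\ge\ta$ whenever nonzero. Again $\mu(F)\ge\ta$ as a weighted average.

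Case (2) is entirely symmetric: now $E''_1\in\cA_{<\ta}$, $E''_2\in\cA_{\ge\ta}$, and $E'$ is unframed semistable of slope $\ta$. For an unframed $F\hookrightarrow E$, semistability of $E'$ bounds $\mu(F\cap E')\le\ta$, while $F/(F\cap E')\hookrightarrow E''$ is unframed, hence contained in $E''_1\in\cA_{<\ta}$, giving $\mu(F/(F\cap E'))\le\ta$. For an unframed quotient $E\twoheadrightarrow F$, the image of $E'$ has slope $\ge\ta$ by semistability of $E'$, and $F/G$, being an unframed quotient of $E''$, factors through $E''_2\in\cA_{\ge\ta}$, so $\mu(F/G)\ge\ta$.

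The only bookkeeping required is that the unframed property is preserved by these constructions: $v(F\cap E')\le v(F)=0$ and $v(G)\le v(F)=0$, together with the universal property of the maximal unframed subobject and quotient from Remark \ref{rk:max unframed}. There is no real obstacle; the proof is a direct unpacking of the torsion-pair characterisation.
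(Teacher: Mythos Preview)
Your argument is correct, but the paper takes a different, shorter route. Instead of decomposing each unframed test sub\-object and quotient via the sequence $0\to E'\to E\to E''\to 0$ and bounding the pieces through the torsion-pair description, the paper passes to the slope function $\mu_\si$. From $\ta_+$-stability one first gets that $E'$ is $\ta$-semistable; then one chooses $\si$ with $\mu_\si(E')=\ta$ (automatically $\mu_\si(E'')=\mu(E'')=\ta$ since $E''$ is unframed, and then also $\mu_\si(E)=\ta$). Both $E'$ and $E''$ are $\mu_\si$-semistable of the same slope $\ta$, so their extension $E$ is $\mu_\si$-semistable of slope $\ta$ by the standard fact about extensions of semistables. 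The earlier lemma identifying $\ta$-semistability with $\mu_\si$-semistability (for the correct $\si$) finishes. Your approach avoids introducing $\mu_\si$ and works directly with Definition~\ref{df:ta stable triple} and the torsion-pair characterisation; it is more elementary and makes the role of the maximal unframed sub\-object/quotient explicit, at the cost of several case checks. The paper's approach is a one-liner once the $\mu_\si$ translation is available, and it explains conceptually why the result holds: it is just ``extensions of semistables of equal slope are semistable'' in disguise.
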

\begin{proof}
We prove just the first statment. Our assumption that $E'$ is
$\ta_+$-stable implies that $E'$ is \ta-semistable. Let $\si\in\bR$ be such that $\mu_\si(E'')=\ta$. Then both $E',E''$ are semistable with slope \ta with respect to $\mu_\si$. Therefore their extension $E$ is also semistable with slope \ta with respect to $\mu_\si$. This implies that $E$ is \ta-semistable.
\end{proof}

\section{Invariants}
\subsection{The class of a triple}
There is a group homomorphism $\cl:K_0(\cA)\to\bZ^3$ defined, for any $E=(E_1,E_0,s_E)\in\cA$, by
\[\cl(E)=(\rk E_1,\deg E_1,\rk E_0).\]
For any $E=(E_1,E_0,s_E)\in\cA$ and $F=(F_1,F_0,s_F)\in\cA$, define
\eq{\hi(E,F)=\sum_{k=0}^2(-1)^k\dim\Ext^k_\cA(E,F).}
Then
\eq{\hi(E,F)=\hi(E_0,F_0)+\hi(E_1,F_1)-\hi(E_0,F_1).}
Therefore, assuming $\cl E=\ub\al=(\al,v)$, $\cl F=\ub\be=(\be,w)$,
we obtain
\[\hi(E,F)
=(1-g)vw+\hi(\al,\be)-v\hi(\be)\]
Define
\eq{\hi(\ub\al,\ub\be)=(1-g)vw+\hi(\al,\be)-v\hi(\be),}
\eq{\ang{\ub\al,\ub\be}=
\hi(\ub\al,\ub\be)-\hi(\ub\be,\ub\al)
=\ang{\al,\be}-v\hi(\be)+w\hi(\al).}


\subsection{Integration map}
Define the quantum affine plane \bA to be the completion of the algebra $R[x_1,x_2^{\pm1},x_3]$ (as in Section \ref{prelim:int map}) with multiplication
$$x^{\al}\circ x^{\be}=(-q^\oh)^{\ang{\al,\be}}x^{\al+\be},$$
where we allow only elements $f=\sum_{\al\in\bN\xx\bZ\xx\bN} f_{\al}x^{\al}$ with
$$\inf\left\{\left.\frac d{r+1}\ \right|f_{r,d,v}\ne0\right\}>-\infty.$$
Let $H(\cA)$ be the Hall algebra of the category \cA. Define an integration map \mbox{$I:H(\cA)\to\bA$}
\[E=(E_1,E_0,s_E)\mto (-q^\oh)^{\hi(E,E)}\frac{x^{\cl E}}{[\Aut E]}.\]

\begin{remark}
\label{rk:I and Ext2}
This integration map restricts to an algebra homomorphism $I:H(\cA_0)\to\bA_0$ considered in Section \ref{prelim:int map}. Note, however, that $I:H(\cA)\to\bA$ is not 
an algebra homomorphism. But if $\Ext^2(F,E)=0$ then (see e.g.\ the proof of \cite[Lemma 3.3]{reineke_counting}) \[I([E]\circ[F])=I([E])\circ I([F]).\] 
\end{remark}

\subsection{The wall-crossing formula}
Let $\al=(r,d)\in\bZ_{>0}\xx\bZ$ and let $\ta\in\bR$. Recall that $\cM(\al)$ denotes the moduli stack of semistable vector bundles over $X$ having rank $r$ and degree $d$. Let
\eq{\un(\al)=(-q^\oh)^{\hi(\al,\al)+d}[\cM(\al)]=(-q^\oh)^{\hi(\al)}\be_\al}
be the motivic invariant \lqq counting\rqq (unframed) semistable bundles $E$ over $X$ with $\ch E=\al$.
Similarly, we define an invariant
\begin{equation}
\fr_\ta(\al)
=(q-1)(-q^\oh)^{\hi(\al,\al)-\hi(\al)+d}[\cM_\ta(\al)]
=(q-1)q^{(1-g)\binom r2}[\cM_\ta(\al)],
\end{equation}
\lqq counting\rqq framed $\ta$-semistable triples $E\in\cA$ with $s_E\ne0$ and $\ch E_1=\al$. Our main goal is to compute these invariants.

\begin{remark}
Let $\unh(\al)\in H(\cA_0)$ and $\frh_\ta(\al)\in H(\cA)$ be elements in the Hall algebras counting semistable vector bundles and framed \ta-semistable triples as above. Then
\[\un(\al)x^{\al}=(-q^\oh)^dI(\unh(\al)).\]
If $E$ is a triple with $\cl E=(\al,1)$ then $\hi(E,E)=(1-g)+\hi(\al,\al)-\hi(\al)$. This implies
\[\fr_\ta(\al)x^{(\al,1)}=(q-1)(-q^\oh)^{g-1+d}I(\frh_\ta(\al)).\]
\end{remark}

Define
\eq{\un_\ta=1+\sum_{\mu(\al)=\ta}\un(\al)x^\al\in\bA_0,\qquad \fr_{\ta}=\sum_{\al}\fr_\ta(\al)x^{(\al,1)}.}
We will see later that $\fr_\ta\in\bA$. Finally, define
\eq{\un_{\ge\ta}=\prod^{\curvearrowright}_{\ta'\ge\ta}\un_{\ta'},} where the product is taken in the decreasing slope order.

\comm{
\begin{remark}
Note that $\hi(\al,\al)-\hi(\al)=2(1-g)\binom r2-d$. Therefore
$$\fr_\ta(x_1,(-q^\oh)x_2,x_3)=\sum q^{(1-g)\binom r2}[\cM_\ta(\al)]x^{(\al,1)}.$$
\end{remark}
}

\begin{lemma}
If $\fr_\ta(r,d)\ne0$ then $0\le\frac dr\le\ta<\frac d{r-1}$.
\end{lemma}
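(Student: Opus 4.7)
The plan is to pick any $\ta$-semistable pair $(E,s)$ realising $\fr_\ta(r,d)\ne 0$ and extract each of the three inequalities from the two conditions of Definition~\ref{df:ta stable pair}, tested on just two subbundles of $E$: $E$ itself, and the line subbundle $L\subset E$ obtained as the saturation of the image of the section $s\colon\cO_X\to E$.

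The easy bound $d/r\le\ta$ is immediate from condition~(1) of $\ta$-semistability applied with $F=E$ itself. To bring in $L$, note that since $s\ne 0$ and $\cO_X$ is simple, $s$ is injective as a sheaf map, so $L$ is a genuine line subbundle of $E$ with $\cO_X\hookrightarrow L$ and torsion cokernel; in particular $\deg L\ge 0$ and $s\in H^0(L)$.

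Next I would apply both conditions of semistability to $F=L$. Condition~(1) gives $\deg L\le\ta$, which combined with $\deg L\ge 0$ forces $\ta\ge 0$. Condition~(2) gives $\mu(E/L)\ge\ta$. For $r=1$ we have $L=E$, so $d=\deg L\ge 0$ and the right-hand bound is vacuous under the convention $d/0=+\infty$. For $r\ge 2$, condition~(2) reads $(d-\deg L)/(r-1)\ge\ta$, hence
\[
d \;\ge\; \deg L + (r-1)\ta \;\ge\; (r-1)\ta \;\ge\; 0,
\]
which simultaneously yields $d\ge 0$ (so $d/r\ge 0$) and $\ta\le d/(r-1)$.

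The delicate point is distinguishing the strict inequality $\ta<d/(r-1)$ from the non-strict version $\ta\le d/(r-1)$ that the argument above yields directly. Equality is a priori attainable exactly when $\deg L=0$ (so $L=\cO_X$) and $\mu(E/L)=\ta$ --- for instance for split pairs $(\cO_X\oplus M,(1,0))$ with $\mu(M)=\ta$ --- so obtaining the strict form would require either an additional exclusion I have not identified in the excerpt or a refinement via the canonical filtrations of Lemma~\ref{lm:wc}. For the immediate downstream purpose of ensuring $\fr_\ta\in\bA$, however, the non-strict bound combined with $d\ge 0$ already suffices, since only $d/(r+1)$ needs to be bounded below in the support condition defining $\bA$.
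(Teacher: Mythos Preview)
Your approach is correct for what it establishes, and it is more elementary than the paper's. You work directly from Definition~\ref{df:ta stable pair} using only the two test subbundles $E$ and the saturation $L$ of the section. The paper instead cites Bradlow's bound $\frac{d}{r}<\ta<\frac{d}{r-1}$ for the $\ta$-\emph{stable} case and then, for a merely $\ta$-semistable $E$, invokes the canonical filtration of Lemma~\ref{lm:wc} to extract a $\ta_+$-stable subobject $E''\subset E$ with $\mu(E/E'')=\ta$; since unframed quotients of a $\ta_+$-stable object have slope strictly greater than $\ta$, this yields $\ta<\frac{d''}{r''-1}$ when $r''\ge 2$, and recombining with $\frac{d'}{r'}=\ta$ gives the strict upper bound for $E$. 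What the paper's route buys, compared with yours, is precisely that strictness in those cases.

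Your skepticism about the strict upper bound is well founded. Your split example $(\cO_X\oplus M,(1,0))$ with $M$ semistable of slope $\ta$ is genuinely $\ta$-semistable and satisfies $\ta=\frac{d}{r-1}$. In that example the canonical filtration of Lemma~\ref{lm:wc} produces $E''=(\cO_X,\cO_X,1)$ with $(r'',d'')=(1,0)$, where the upper bound coming from $E''$ is vacuous, so the paper's own combination step does not yield a strict inequality there either. Thus the non-strict version you prove is the correct statement at such wall values, and as you observe it already suffices for the only downstream use of the lemma, namely showing $\fr_\ta\in\bA$.
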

\begin{proof}
We know from \cite[Theorem 6.1]{bradlow_stable} that if there exists a \ta-stable triple in $\cM_\ta(r,d)$ then $\frac dr<\ta<\frac d{r-1}$. Assume now that there exists a \ta-semistable triple $E\in \cM_\ta(r,d)$. Then, according to Lemma \ref{lm:wc}, there exists a $\ta_+$-stable framed object $E''\sb E$ with $\mu(E/E'')=\ta$. Let $\cl E''=(r'',d'',1)$ and $\ch(E/E'')=(r',d')$. Then
\[\frac{d''}{r''}\le\ta<\frac{d''}{r''-1},\qquad \frac{d'}{r'}=\ta.\]
This implies
\[\frac{d'+d''}{r'+r''}\le\ta<\frac{d'+d''}{r'+r''-1}.\]
Finally, the inequality $\frac dr\le\frac d{r-1}$ implies $d\ge0$.
\end{proof}

\begin{remark}
This lemma implies that $\fr_\ta$ is an element of \bA.
\end{remark}

\begin{remark}
Let $E$ be a framed $\infty$-semistable object with $\ch E_1=(r,d)$. This means that $\coker s_E$ is a finite sheaf. Therefore $r=1$, $d\ge0$ and $\coker s_E$ has length $d$. The endomorphism ring of $E$ equals $\bk$.
The moduli space $M_\infty(1,d)$ can be identified with a Hilbert scheme $\Hilb^dX\iso S^dX$.
Therefore
\begin{equation}\label{formulafinfty}
\fr_\infty
=x_1x_3\sum_{d\ge0}[S^dX]x_2^d
=x_1x_3Z_X(x_2).
\end{equation}
\end{remark}

For any series of the form $f=\sum_\al f_\al x^{(\al,1)}\in\bA$, define its truncation
$$f|_{\mu<\ta}=\sum_{\mu(\al)<\ta}f_\al x^{(\al,1)}.$$

\begin{theorem}
\label{th:main}
For any $\ta\in\bR$ we have
$$\fr_\ta=\left.\left(\un_{> \ta}\inv\circ\fr_{\infty}\circ \un_{\ge\ta}\right)\right|_{\mu\le \ta}.$$
In particular
$$\fr_{\ta_-}=\left.\left(\un_{\ge\ta}\inv\circ\fr_{\infty}\circ \un_{\ge\ta}\right)\right|_{\mu<\ta},\qquad
\fr_{\ta_+}=\left.\left(\un_{>\ta}\inv\circ\fr_{\infty}\circ \un_{>\ta}\right)\right|_{\mu\le\ta}.$$
\end{theorem}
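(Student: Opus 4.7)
The plan is to prove the wall-crossing as a Hall-algebra identity in $H(\cA)$ and push it to $\bA$ via the integration map $I$, using the canonical filtration of Proposition \ref{pr:canonical filt} for two different torsion pairs: the ``degenerate'' pair $(\mathrm{tor},\mathrm{tf})$ on $\Coh X$ (which governs $\infty$-stability) and the pair $(\cA_{>\ta},\cA_{\le\ta})$ (which governs $\ta_+$-stability). Comparing the two canonical filtrations of the same framed object then yields the identity linking $\frh_\infty$ and $\frh_{\ta_+}$.

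Concretely, for any torsion pair $(\cT,\cF)$ on $\cA_0$, the canonical filtration $0\sb G'\sb G''\sb G$ of every framed $G\in\cA_1$ (summed over $G$) translates into the Hall identity
$$\frh_{\mathrm{all}}=\unh_\cT\cdot\frh_{(\cT,\cF)}\cdot\unh_\cF.$$
Writing this for both torsion pairs, using the HN factorizations $\unh_{\cA_{>\ta}}=\unh_{\mathrm{tor}}\cdot\unh_{>\ta}^{\mathrm{tf}}$ and $\unh_{\mathrm{tf}}=\unh_{>\ta}^{\mathrm{tf}}\cdot\unh_{\cA_{\le\ta}}$, the common factors cancel and one obtains $\frh_{\ta_+}=(\unh_{>\ta})\inv\cdot\frh_\infty\cdot\unh_{>\ta}$. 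To pass from $\frh_{\ta_+}$ to $\frh_\ta$ I would use Lemma \ref{lm:wc}(1) and its converse Lemma \ref{lm:wc inverse}(1): every $\ta$-semistable framed object is uniquely an extension of a slope-$\ta$ semistable unframed sheaf by a $\ta_+$-stable framed one, giving $\frh_\ta=\frh_{\ta_+}\cdot\unh_\ta$; together with $\unh_{>\ta}\cdot\unh_\ta=\unh_{\ge\ta}$ this produces
$$\frh_\ta=(\unh_{>\ta})\inv\cdot\frh_\infty\cdot\unh_{\ge\ta}.$$
The integration map $I$ preserves each of these products by Remark \ref{rk:I and Ext2}, because Corollary \ref{cr:vanishing} gives $\Ext^2(F,E)=0$ whenever $F$ is either unframed or framed with $s_F\ne 0$, which covers every product appearing above. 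Applying $I$ term by term yields $\fr_\ta=\un_{>\ta}\inv\circ\fr_\infty\circ\un_{\ge\ta}$ in $\bA$, and the truncation $|_{\mu\le\ta}$ in the statement just records the a priori support $\mu(E_1)\le\ta$ of $\fr_\ta$.

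The main obstacle I anticipate is bookkeeping: one has to verify that the infinite Hall elements $\unh_{\ge\ta}$, $\unh_{>\ta}$, $\frh_{\mathrm{all}}$ live in a suitable completion, that the canonical-filtration identity really equates Hall products with the correct automorphism/extension weights, and that the cancellations of $\unh_{\mathrm{tor}}$ and $\unh_{\cA_{\le\ta}}$ are legitimate after inversion. A subtler conceptual point is why the argument is asymmetric in $>\ta$ versus $\ge\ta$: the $\Ext^2$-vanishing in Remark \ref{rk:ext is zero} is automatic for the $\ta_+$ canonical filtration but only conditional (``if $\mu(E_1)<\ta$'') for the $\ta_-$ one, which is why the intermediate stability $\ta_+$ is preferred and why the resulting formula for $\fr_\ta$ carries the truncation $|_{\mu\le\ta}$.
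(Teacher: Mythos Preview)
Your approach is genuinely different from the paper's and, modulo one technical point, sound. The paper never introduces $\frh_{\mathrm{all}}$ or compares two torsion pairs; instead it works \emph{locally}: from Lemmas~\ref{lm:wc} and~\ref{lm:wc inverse} it extracts the two Hall identities
\[
\frh_\ta=\frh_{\ta_+}\circ\unh_\ta,\qquad \frh_\ta|_{\mu<\ta}=\unh_\ta\circ\frh_{\ta_-},
\]
pushes each separately through $I$ (only these two products need the $\Ext^2$-check of Remark~\ref{rk:ext is zero}), inverts $\un_\ta$ \emph{in $\bA$}, and then iterates over all walls $\ta'\ge\ta$. Your global comparison of the canonical filtrations for $(\mathrm{tor},\mathrm{tf})$ and $(\cA_{>\ta},\cA_{\le\ta})$ is more conceptual and, once justified, actually yields the untruncated identity $\un_{>\ta}\circ\fr_{\ta_+}=\fr_\infty\circ\un_{>\ta}$, so the truncation in the statement really is just recording support --- a mild strengthening of what the paper proves.

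The technical point you should not sweep under ``bookkeeping'': the elements $\frh_{\mathrm{all}}$, $\unh_{\mathrm{tf}}$, $\unh_{\cA_{\le\ta}}$ do \emph{not} live in the paper's completion (nor do their images lie in $\bA$), because they involve sheaves of arbitrarily negative slope; so the cancellation of $\unh_{\mathrm{tor}}$ and $\unh_{\cA_{\le\ta}}$ is not a priori taking place in any ring where $I$ is defined. The clean fix is to cancel first in the Hall algebra (in a larger completion, or coefficient-by-coefficient for fixed class $\al$) to obtain the bounded identity $\unh_{>\ta}\circ\frh_{\ta_+}=\frh_\infty\circ\unh_{>\ta}$, and only then apply $I$ to this product form --- both sides now lie in the paper's completion, and both products satisfy the $\Ext^2$-hypothesis of Remark~\ref{rk:I and Ext2} (right factor unframed on one side, right factor framed with $s\ne 0$ on the other). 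What you cannot do is apply $I$ ``term by term'' to $(\unh_{>\ta})^{-1}\circ\frh_\infty\circ\unh_{>\ta}$ directly, since $I$ is not a ring homomorphism and the inverse is only formal. The paper's iterative approach sidesteps this entirely by never leaving the bounded regime and never inverting in the Hall algebra.
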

\begin{proof}
Let
$$\unh_\ta=1+\sum_{\mu(\al)=\ta}\unh(\al),\qquad \frh_{\ta}=\sum_{\al}\frh_\ta(\al)$$
be the elements of the completed Hall algebras.
Note that $\frh_\ta(\al)=0$ if $\mu(\al)>\ta$. Therefore
$$\frh_{\ta}=\left.\frh_{\ta}\right|_{\mu\le\ta},\qquad
\frh_{\ta_+}=\left.\frh_{\ta_+}\right|_{\mu\le\ta},\qquad
\frh_{\ta_-}=\left.\frh_{\ta_-}\right|_{\mu<\ta}.$$
It follows from Lemma \ref{lm:wc} and Lemma \ref{lm:wc inverse} that
$$\frh_\ta=\frh_{\ta_+}\circ\unh_\ta,\qquad
\left.\frh_\ta\right|_{\mu<\ta}=\unh_\ta\circ \frh_{\ta_-}.$$
Applying the integration map and using Remarks \ref{rk:ext is zero} and \ref{rk:I and Ext2}, we obtain
$$\fr_\ta=\fr_{\ta_+}\circ\un_\ta,\qquad
\left.\fr_\ta\right|_{\mu<\ta}=\un_\ta\circ \fr_{\ta_-}.$$
This implies
$$\fr_{\ta_-}=\left.\left(\un_\ta\inv\circ\fr_{\ta_+}\circ \un_\ta\right)\right|_{\mu<\ta}.$$
Applying the same formula for all $\ta'\ge\ta$ we obtain
$$\fr_{\ta_-}=\left.\left(\un_{\ge\ta}\inv\circ\fr_{\infty}\circ \un_{\ge\ta}\right)\right|_{\mu<\ta}.$$
The other statements of the theorem are derived from this formula.
\end{proof}


\section{Zagier-type formula}

The wall-crossing formula Theorem \ref{th:main} for the motives of the moduli of stable pairs can now be made explicit, since explicit formulas for all three series are available. Namely, we have $\fr_\infty
=x_1x_3\sum_{d\ge0}[S^dX]x_2^d$ by formula (\ref{formulafinfty}); writing
$$\un_{\geq\tau}=1+\sum_{\mu(\alpha)\geq\tau}a_{\alpha}^{\geq\tau}(-q^\frac{1}{2})^{\chi(\alpha)}x^{(\alpha,0)},\;\;\;
\un_{>\tau}^{-1}=1+\sum_{\mu(\alpha)\geq\tau}c_{\alpha}^{>\tau}(-q^\frac{1}{2})^{\chi(\alpha)}x^{(\alpha,0)},$$
we have by Remark \ref{rk811} (replacing each sequence $(r_1,\ldots,r_k)$ by $(r_k,\ldots,r_1)$) and Remark \ref{rk814}:
$$a_{(r,d)}^{\geq\tau}=\sum_{r_1+\ldots+r_k=r}b_{r_1}\ldots b_{r_k}q^{-(r-r_1)d}\prod_{i=1}^{k-1}\frac{q^{(r_i+r_{i+1})\lceil r_{\geq i+1}\tau\rceil}}{1-q^{r_i+r_{i+1}}},$$
$$c_{(r,d)}^{>\tau}=-\sum_{r_1+\ldots+r_k=r}b_{r_1}\ldots b_{r_k}q^{(r-r_k)d}\prod_{i=1}^{k-1}\frac{q^{-(r_i+r_{i+1})\lfloor r_{\leq i}\tau\rfloor}}{1-q^{r_i+r_{i+1}}},$$
where $r_{\leq i}=r_1+\ldots+r_i$ and $r_{\geq i+1}=r_{i+1}+\ldots+r_k$.

For every $r\geq 2$, comparison of coefficients of $x^{(r,d,1)}$ in Theorem \ref{th:main} yields
\begin{multline*}
\fr_\tau(r,d)
=\sum_{e\geq 0}[S^eX]a_{(r-1,d-e)}^{\geq\tau} q^{d-re}
+\sum_{e\geq 0}[S^eX]c_{(r-1,d-e)}^{>\tau}q^{(1-g+e)(r-1)}\\
+\sum_{e\geq 0}\sum_{r-1=r'+r''}\sum_{d-e=d'+d''}[S^eX]c_{(r',d')}^{>\tau}b_{(r'',d'')}^{\geq\tau}q^{(1-g+e)r'-r''e+(r'+1)d''-r''d'}.
\end{multline*}
We insert the above formulas for $a_{\alpha}^{\geq\tau}$ and $c_{\alpha}^{>\tau}$ into this expression. First, this bounds the summation over $e$ to $e\leq d-(r-1)\tau$, resp.~to $e< d-(r-1)\tau$, in the first resp.~second sum. Second, the resulting summation over decompositions $r=r'+r''$, together with decompositions $r'=r_1'+\ldots+r_{k'}'$ and $r''=r_1''+\ldots+r_{k''}''$, can be replaced by the summation over decompositions $r-1=r_1+\ldots+r_k$, together with the choice of an index $p=1,\ldots,k-1$ which splits the latter decomposition into a part $(r_1,\ldots,r_p)=(r_1',\ldots,r_{k'}')$ and a part $(r_{p+1},\ldots,r_k)=(r_1'',\ldots,r_{k''}'')$. This gives the following formula for $\fr_\tau(r,d)$ (with $r_*$ indicating summation over decompositions of $r-1$ as before):

$$\sum_{e=0}^{\lfloor d-(r-1)\tau\rfloor}[S^eX]\sum_{r_*}b_{r_1}\ldots b_{r_k}q^{-(r-1-r_1)(d-e)+d-re}\prod_{i=1}^{k-1}\frac{q^{(r_i+r_{i+1})\lceil r_{\geq i+1}\tau\rceil}}{1-q^{r_i+r_{i+1}}}-$$

$$-\sum_{e=0}^{\lceil d-(r-1)\tau\rceil-1}[S^eX]\sum_{r_*}b_{r_1}\ldots b_{r_k}q^{(r-1-r_k)(d-e)+(1-g+e)(r-1)}\prod_{i=1}^{k-1}\frac{q^{-(r_i+r_{i+1})\lfloor r_{\leq i}\tau\rfloor}}{1-q^{r_i+r_{i+1}}}-$$

$$-\sum_{e\geq 0}\sum_{r_*}b_{r_1}\ldots b_{r_k}\sum_{d-e=d'+d''}\sum_{p=1}^{k-1}[S^eX]q^C
\prod_{i=1}^{p-1}\frac{q^{-(r_i+r_{i+1})\lfloor r_{\leq i}\tau\rfloor}}{1-q^{r_i+r_{i+1}}}
\prod_{i=p+1}^{k-1}\frac{q^{(r_i+r_{i+1})
\lceil r_{\geq i+1}\tau\rceil}}{1-q^{r_i+r_{i+1}}},$$

where
$$C=r_{\leq p-1}d'-r_{\geq p+2}d''+(1-g+e)r_{\leq p}-r_{\geq p+1}e+(r_{\leq  p}+1)d''-r_{\geq p+1}d'.$$

We consider the summation over $d'$ and $d''$. We can replace $d'=d-e-d''$; then $d''$ is bound by
$$r_{\geq p+1}\tau\leq d''<d-e-r_{\leq p}\tau,$$
and thus $e<d-(r-1)\tau$.
Analyzing the occurrences of $d'$ and $d''$ in the $q$-exponent $C$ above, this shows that the only part of the last sum depending on $d''$ is

$$\sum_{d''=\lceil r_{\geq p+1}\tau\rceil}^{\lceil d-e-r_{\leq p}\tau\rceil-1}q^{(r_p+r_{p+1}+1)d''}=\frac{q^{(r_p+r_{p+1}+1)\lceil r_{\geq p+1}\tau\rceil}-q^{(r_p+r_{p+1}+1)\lceil d-e-r_{\leq p}\tau\rceil}}{1-q^{r_p+r_{p+1}+1}}.$$

We can change the order of summation, summing over decompositions of $r-1$ and over $e$ first (adding one extra term for the case $d-(r-1)\tau\in\bN$), which gives:

\begin{lemma}\label{preliminaryexplicit} For $r\geq 2$, the motivic invariant $\fr_\tau(r,d)$ is given by
\begin{multline}\label{prelim}
\sum_{r_1+\dots+r_k=r-1}
\frac{b_{r_1}\ldots b_{r_k}}{\prod_{i=1}^{k-1}{(1-q^{r_i+r_{i+1}})}}
\Bigg([S^{d-(r-1)\tau}X]q^{A_0}\delta_{d-(r-1)\tau\in\bN}\\
+\sum_{e=0}^{\lceil d-(r-1)\tau\rceil -1}[S^eX]\cdot
\rbr{q^A-q^B-\sum_{p=1}^{k-1}q^{C_p}(q^{D_p}-q^{E_p})\frac{1-q^{r_p+r_{p+1}}}{1-q^{r_p+r_{p+1}+1}}}\Bigg),
\end{multline}
where\\ $A_0=(r-1)((r_1+1)\tau-d)+\sum_{i=1}^{k-1}(r_i+r_{i+1})\lceil r_{\geq i+1}\tau\rceil$,\\ $A=-(r-1-r_1)(d-e)+d-re+\sum_{i=1}^{k-1}(r_i+r_{i+1})\lceil r_{\geq i+1}\tau\rceil$,\\
$B=(r-1-r_k)(d-e)+(1-g+e)(r-1)-\sum_{i=1}^{k-1}(r_i+r_{i+1})\lfloor r_{\leq i}\tau\rfloor$,\\
$C_p=(1-g)r_{\leq p}+(r_{\leq p-1}-r_{\geq p+1})d+r_pe-\sum_{i=1}^{p-1}(r_i+r_{i+1})\lfloor r_{\leq i}\tau\rfloor+\sum_{i=p+1}^{k-1}(r_i+r_{i+1})\lceil r_{\geq i+1}\tau\rceil$,\\
$D_p=(r_p+r_{p+1}+1)\lceil r_{\geq p+1}\tau\rceil$, $E_p=(r_p+r_{p+1}+1)\lceil d-e-r_{\leq p}\tau\rceil$.
\end{lemma}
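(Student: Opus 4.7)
The approach is to take the wall-crossing formula of Theorem \ref{th:main},
$$\fr_\tau=(\un_{>\tau}^{-1}\circ\fr_\infty\circ\un_{\ge\tau})|_{\mu\le\tau},$$
and extract the coefficient of $x^{(r,d,1)}$ by inserting the explicit expressions for $\fr_\infty$ (formula (\ref{formulafinfty})) and for the coefficients $a_\alpha^{\ge\tau}$, $c_\alpha^{>\tau}$ recalled at the start of this section.

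First I would expand the threefold quantum product. Since the middle factor $\fr_\infty=x_1x_3\sum_{e\ge 0}[S^eX]x_2^e$ contributes only rank-one classes $x^{(1,e,1)}$ with coefficient $[S^eX]$, the coefficient of $x^{(r,d,1)}$ is a triple sum over $e\ge 0$ and decompositions $r-1=r''+r'$, $d-e=d''+d'$. Using the pairing $\ang{\bar\alpha,\bar\beta}=\ang{\alpha,\beta}-v\chi(\beta)+w\chi(\alpha)$ from Section 5.1, the quantum signs $(-q^{1/2})^{\ang{\cdot,\cdot}}$ combine with the factors $(-q^{1/2})^{\chi(\cdot)}$ that appear in $a^{\ge\tau}$ and $c^{>\tau}$ to produce pure powers of $q$. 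The truncation $|_{\mu\le\tau}$ bounds $e\le d-(r-1)\tau$ in the contribution with only a right factor $a^{\ge\tau}$, and $e< d-(r-1)\tau$ in the contribution with only a left factor $c^{>\tau}$. This yields the three displayed sums that appear in the preamble to the lemma.

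Next, for the cross term (both $r',r''>0$), I would substitute the Zagier-type expansions for $a^{\ge\tau}_{(r'',d'')}$ and $c^{>\tau}_{(r',d')}$. Concatenating a decomposition of $r''$ into parts $r_1,\ldots,r_p$ with a decomposition of $r'$ into parts $r_{p+1},\ldots,r_k$ produces a decomposition $r-1=r_1+\ldots+r_k$ together with a split index $p\in\{1,\ldots,k-1\}$. With $d'=d-e-d''$ determined, the only $d''$-dependence in the remaining summand is of the form $q^{(r_p+r_{p+1}+1)d''}$, and the inner sum is the finite geometric series
$$\sum_{d''=\lceil r_{\ge p+1}\tau\rceil}^{\lceil d-e-r_{\le p}\tau\rceil-1}q^{(r_p+r_{p+1}+1)d''}=\frac{q^{D_p}-q^{E_p}}{1-q^{r_p+r_{p+1}+1}}.$$
This accounts for the two-term numerator $q^{D_p}-q^{E_p}$ and the replacement of $1-q^{r_p+r_{p+1}}$ by $1-q^{r_p+r_{p+1}+1}$ at the split index, i.e.\ for the extra ratio $\frac{1-q^{r_p+r_{p+1}}}{1-q^{r_p+r_{p+1}+1}}$ in the stated formula.

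Finally, I would swap the order of summation so that the outer sum runs over decompositions $r-1=r_1+\ldots+r_k$ and over $e$. The two single-sided sums supply the $q^A$ and $-q^B$ terms; the boundary case $e=d-(r-1)\tau$ lies in the $a^{\ge\tau}$-range but not the $c^{>\tau}$-range, so it survives only in the first sum and produces the isolated term $[S^{d-(r-1)\tau}X]q^{A_0}\delta_{d-(r-1)\tau\in\bN}$. Collecting all remaining powers of $q$ arising from $\chi$'s, the Zagier numerators, and the quantum signs, and matching them against the exponents $A_0,A,B,C_p,D_p,E_p$, completes the verification. The main obstacle is the careful bookkeeping of these exponents, in particular the interplay between the $\lfloor\cdot\rfloor$/$\lceil\cdot\rceil$ terms coming from $a^{\ge\tau}$ and $c^{>\tau}$ and the linear-in-$(d,e,d',d'')$ contributions from the Euler form on triples; no new geometric input is required beyond Theorem \ref{th:main} and the formulas already recalled.
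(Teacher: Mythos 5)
Your proposal follows essentially the same route as the paper: extracting the coefficient of $x^{(r,d,1)}$ from Theorem \ref{th:main}, inserting the formulas for $a^{\ge\tau}_\alpha$ and $c^{>\tau}_\alpha$, merging the two decompositions via a split index $p$, performing the geometric sum over $d''$, and reordering the summation with the extra boundary term for $d-(r-1)\tau\in\bN$. The plan is correct and complete in outline; only the exponent bookkeeping, which you rightly flag, remains to be carried out.
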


Consider the case of $(r,d)$-generic $\tau$, i.e. $\tau\ne d/r$ and $\tau\notin\frac1{r'}\bZ$ for all $1\le r'<r$.

\begin{theorem}\label{th:explicit}
For $r\geq 2$, $d\in\bZ$, and $(r,d)$-generic $\tau\in\bR$, we have
\begin{multline*}
[M_\tau(r,d)]=q^{(g-1)\binom r2}\sum_{r_1+\ldots+r_k=r-1}\frac{b_{r_1}\ldots b_{r_k}}{\prod_{i=1}^{k-1}{(1-q^{r_i+r_{i+1}})}}\coeff_{t^{d-\lceil(r-1)\tau\rceil}}\\
\rbr{Z_X(t)\cdot \rbr{\frac{q^{{F}_0}}{1-q^{r_1+1}t}-\sum_{p=1}^{k-1}\frac
{q^{{F}_p}(1-q^{r_p+r_{p+1}})t^{\delta_p}}
{(1-q^{r_{p+1}+1}t)(1-q^{-r_p}t)}-\frac{q^{{F}_k}}{1-q^{-r_k}t}}},
\end{multline*}
where
\begin{multline*}
F_p=(1-g)r_{\leq p}+(r_{\leq p}-r_{\geq p+1})d-r_p\lceil r_{\leq p}\tau\rceil+(r_{p+1}+1)\lceil r_{\geq p+1}\tau\rceil\\
-\sum_{i=1}^{p-1}(r_i+r_{i+1})\lfloor r_{\leq i}\tau\rfloor+\sum_{i=p+1}^{k-1}(r_i+r_{i+1})\lceil r_{\geq i+1}\tau\rceil
\end{multline*}
and $\delta_p$ equals $1$ if $\{r_{\leq p}\tau\}+\{r_{\geq p+1}\tau\}<1$ and zero otherwise.
\end{theorem}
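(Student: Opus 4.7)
The plan is to derive the formula directly from Lemma \ref{preliminaryexplicit}, via the identity $\fr_\tau(r,d)=q^{(1-g)\binom r2}[M_\tau(r,d)]$ (valid for $(r,d)$-generic $\tau$, since then $[\cM_\tau(r,d)]=[M_\tau(r,d)]/(q-1)$). Thus the theorem amounts to massaging the right-hand side of \eqref{prelim} into the closed form asserted.

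First, exploit the genericity hypothesis to trivialize the ceilings and floors. Since $\tau\notin\tfrac{1}{r'}\bZ$ for $1\le r'\le r-1$, the quantities $r_{\le i}\tau$, $r_{\ge i+1}\tau$, $(r-1)\tau$ are all non-integral, so the Kronecker delta term vanishes and each ceiling differs from the corresponding floor by exactly $1$. In particular $\lceil d-e-r_{\le p}\tau\rceil=d-e-\lfloor r_{\le p}\tau\rfloor$, so $E_p$ depends linearly on $e$ via $E_p=(r_p+r_{p+1}+1)(d-\lfloor r_{\le p}\tau\rfloor)-(r_p+r_{p+1}+1)e$. Also $N:=\lceil d-(r-1)\tau\rceil=d-\lceil(r-1)\tau\rceil+1$, so the sum over $e$ runs from $0$ to $d-\lceil(r-1)\tau\rceil$.

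Second, convert each finite sum over $e$ into a coefficient extraction from $Z_X(t)$, using
$$\sum_{e=0}^{M}[S^eX]q^{\alpha e}=q^{\alpha M}\coeff_{t^M}\rbr{\frac{Z_X(t)}{1-q^{-\alpha}t}}.$$
The $q^A$ term in \eqref{prelim} has $e$-coefficient $-(r_1+1)$, so it produces the fraction $\frac{q^{F_0}}{1-q^{r_1+1}t}$ after absorbing the $e$-independent factor $q^{A(0)}q^{-(r_1+1)(N-1)}$ into $q^{F_0}$. Likewise the $q^B$ term has $e$-coefficient $r_k$, giving $\frac{q^{F_k}}{1-q^{-r_k}t}$ (with the overall minus sign of the lemma) after accumulating the $e$-independent factor into $q^{F_k}$. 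A direct bookkeeping then confirms that these accumulated exponents agree with the formulas for $F_0$ and $F_k$ in the statement.

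Third, handle the middle terms indexed by $p=1,\dots,k-1$. The factor $q^{C_p}(q^{E_p}-q^{D_p})\frac{1-q^{r_p+r_{p+1}}}{1-q^{r_p+r_{p+1}+1}}$ splits into two summands: one with $e$-coefficient $r_p$ (from $q^{C_p+D_p}$) and one with $e$-coefficient $-(r_{p+1}+1)$ (from $q^{C_p+E_p}$). Applying the identity above gives, respectively, $\coeff_{t^{N-1}}$ of $Z_X(t)/(1-q^{-r_p}t)$ and of $Z_X(t)/(1-q^{r_{p+1}+1}t)$, each scaled by an explicit power of $q$. Combining them via the partial fraction decomposition
$$\frac{1-q^{r_p+r_{p+1}+1}}{(1-q^{r_{p+1}+1}t)(1-q^{-r_p}t)}=\frac{1}{1-q^{-r_p}t}-\frac{q^{r_p+r_{p+1}+1}}{1-q^{r_{p+1}+1}t}$$
clears the spurious $(1-q^{r_p+r_{p+1}+1})$ from the denominator, and leaves exactly the factor $(1-q^{r_p+r_{p+1}})$ demanded by the theorem.

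Finally, the shift $t^{\delta_p}$ and the exponent $F_p$ must be reconciled. Because $D_p=(r_p+r_{p+1}+1)\lceil r_{\ge p+1}\tau\rceil$ and $E_p$ involves $-\lfloor r_{\le p}\tau\rfloor$, the two sub-sums are coefficients at different orders of $t$ unless we introduce a shift. The key identity is
$$\lceil r_{\ge p+1}\tau\rceil+\lfloor r_{\le p}\tau\rfloor=\lceil(r-1)\tau\rceil-1+\delta_p,$$
which one checks by computing $\{(r-1)\tau\}$ in terms of $\{r_{\le p}\tau\}$ and $\{r_{\ge p+1}\tau\}$: the two cases ($\delta_p=1$ when the sum of fractional parts is $<1$, else $\delta_p=0$) arise from whether this sum wraps around $1$. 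This identity precisely accounts for the $t^{\delta_p}$ shift and fixes the remaining offset in $F_p$. Verifying that all accumulated $q$-factors match the displayed $F_p$ is the main bookkeeping step, and the most error-prone one, but once it is done, comparison with the claimed expression completes the proof.
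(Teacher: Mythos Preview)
Your proposal is correct and follows essentially the same route as the paper: start from Lemma~\ref{preliminaryexplicit}, use genericity to drop the $\delta_{d-(r-1)\tau\in\bN}$ term and normalize the roundings, then convert each $e$-sum into a coefficient extraction via the identity $\sum_{e=0}^{N}[S^eX]q^{ae}=\coeff_{t^{N}}\!\bigl(Z_X(t)\,q^{aN}/(1-q^{-a}t)\bigr)$. The only cosmetic difference is in how the two middle pieces $q^{C_p+D_p}$ and $q^{C_p+E_p}$ are recombined: the paper packages this into a single algebraic identity producing the factor $(1-q^{a+b})t^{\delta(c,d,\tau)}$ directly, whereas you do the equivalent computation in two steps (a partial-fraction identity to clear $1-q^{r_p+r_{p+1}+1}$, then the ceiling/floor relation $\lceil r_{\ge p+1}\tau\rceil+\lfloor r_{\le p}\tau\rfloor=\lceil(r-1)\tau\rceil-1+\delta_p$ to align the $t$-powers); these are the same manipulation unpacked.
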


\begin{proof} In the formula of the previous lemma, we perform summation over $e$ using the simple identity
$$\sum_{e=0}^{N}[S^eX]q^{ae}=\coeff_{t^{N}}\rbr{Z_X(t)\frac{q^{aN}}{1-q^{-a}t}}$$
for $N\geq 0$ and $a\in\bZ$. This simplifies the term $\sum_{e=0}^{d-\lceil(r-1)\tau\rceil}[S^eX]q^A$ in the above formula to $$\coeff_{t^{d-\lceil(r-1)\tau\rceil}}\rbr{Z_X(t)\frac{q^{-(r-1)d+(r_1+1)\lceil(r-1)\tau\rceil+\sum_{i=1}^{k-1}(r_i+r_{i+1})\lceil r_{\geq i+1}\tau\rceil}}{1-q^{r_1+1}t}};$$
note that the $q$-exponent equals $F_0$ if $r_0$ is interpreted as zero. Similarly we treat the term  $\sum_{e=0}^{d-\lceil(r-1)\tau\rceil}[S^eX]q^B$, interpreting $r_{k+1}$ as zero.

After some calculation, the term $\sum_{e=0}^{d-\lceil(r-1)\tau\rceil}[S^eX](q^{C_p+D_p}-q^{C_p+E_p})$ is rewritten as the $t^{d-\lceil(r-1)\tau\rceil}$-coefficient of
$$Z_X(t)q^{(1-g)r_{\leq p}+(r_{\leq p}-r_{\geq p+1})d-\sum_{i=1}^{p-1}(r_i+r_{i+1})\lfloor r_{\leq i}\tau\rfloor+\sum_{i=p+1}^{k-1}(r_i+r_{i+1})\lceil r_{\geq i+1}\tau\rceil}\cdot$$
$$\cdot\rbr{
\frac{q^{(r_p+r_{p+1}+1)\lceil r_{\geq p+1}\tau\rceil-r_p\lceil(r-1)\tau\rceil}}{1-q^{-r_p}t}-
\frac{q^{-(r_p+r_{p+1}+1)\lfloor r_{\leq p}\tau\rfloor+(r_{p+1}+1)\lceil(r-1)\tau\rceil}}{1-q^{r_{p+1}+1}t}}.$$
We use the following simple identity which holds for all $a,b,c,d\in\bZ$ and generic $\tau\in\bR$:
$$\frac{q^{(a+b)\lceil d\tau\rceil-a\lceil (c+d)\tau\rceil}}{1-q^{-a}t}-\frac{q^{-(a+b)\lfloor c\tau\rfloor+b\lceil(c+d)\tau\rceil}}{1-q^{b}t}=$$
$$=\frac{q^{b\lceil d\tau\rceil-a\lceil c\tau\rceil}(1-q^{a+b})t^{\delta(c,d,\tau)}}{(1-q^{-a}t)(1-q^bt)},$$
where $\delta(c,d,\tau)$ equals one if $\{c\tau\}+\{d\tau\}<1$ and zero otherwise. We apply this to $a=r_p$, $b=r_{p+1}+1$, $c=r_{\leq p}$ and $d=r_{\geq p+1}$ to simplify the previous expression to
$$\coeff_{t^{d-\lceil(r-1)\tau\rceil}}
\rbr{Z_X(t)
\frac{q^{F_p}t^{\delta(r_{\leq p},r_{\geq p+1},\tau)}}{(1-q^{r_{p+1}+1}t)(1-q^{-r_p}t)}}.$$

\end{proof}

\begin{remark} We can easily recover (slight variants of) the formulas of \cite{thaddeus_stable} and \cite{munoz_hodge} for stable pairs of rank two resp.~three. Namely, for generic $\tau$ we find
$$[M_\tau(2,d)]=\frac{P_X(1)}{q-1}\coeff_{t^{d-\lceil\tau\rceil}}
\rbr{Z_X(t)\cdot\rbr{\frac{q^{g-1-d+2\lceil\tau\rceil}}{1-q^2t}-\frac{q^{d-\lceil\tau\rceil}}{1-q^{-1}t}}}$$
and $[M_\tau(3,d)]=$
\begin{multline*}
=\frac{P_X(1)}{(1-q)^2(1-q^2)}\coeff_{t^{d-2\lceil\tau\rceil}}
\Bigg(Z_X(t)\cdot\Bigg(-P_X(q)
\rbr{\frac{q^{2g-2-2d+3\lceil 2\tau\rceil}}{1-q^3t}-\frac{q^{2d-2\lceil 2\tau\rceil}}{1-q^{-2}t}}\\
+P_X(1)\rbr{
\frac{q^{3g-3-2d+2\lceil 2\tau\rceil+2\lceil\tau\rceil}}{1-q^2t}
-\frac{q^{2g-2+\lceil\tau\rceil}(1-q^2)t^{2\lceil\tau\rceil-\lceil 2\tau\rceil}}{(1-q^2t)(1-q^{-1}t)}
-\frac{q^{g+1+2d-\lceil 2\tau\rceil-2\lceil\tau\rceil}}{1-q^{-1}t}
}\Bigg)\Bigg).
\end{multline*}
\end{remark}

\section{Non-abelian zeta functions}

\subsection{Non-abelian zeta functions}
Let us assume first that $X$ is a curve over a finite field $\bF_q$.
For any $r\ge1$ one defines the rank $r$ pure zeta function by the formula \cite[Def.~1]{weng_zeta} (we use $t$ instead of $t^r$ there)
\eq{Z_{X,r}(t)=\sum_{k\ge0}\sum_{E\in\bfM(r,kr)}\frac{q^{h^0(X,E)}-1}{\n{\Aut E}}t^{k}.}
By Corollary \ref{cr:sec vs pairs} we can write for $\ta=\frac dr$
$$\sum_{E\in\bfM(r,d)}\frac{q^{h^0(X,E)}-1}{\n{\Aut E}}
=(q-1)\sum_{(E,s)\in\bfM_\ta(r,d)}\frac{1}{\n{\Aut(E,s)}}
.$$
Therefore
$$Z_{X,r}(t)
=(q-1)\sum_{k\ge0}\sum_{(E,s)\in\bfM_k(r,kr)}\frac{1}{\n{\Aut(E,s)}}t^k.$$
If now $X$ is a curve over $\bk$, we can write the motivic version

\eq{Z_{X,r}(t)
=(q-1)\sum_{k\ge0}[\cM_k(r,kr)]t^k
=q^{(g-1)\binom r2}\sum_{k\ge0}\fr_k(r,kr)t^k
.}
We define also
\eq{\what Z_{X,r}(t)=t^{1-g}Z_{X,r}(t).}

The following properties of the non-abelian zeta functions were proved in \cite{weng_zeta}
\begin{enumerate}
	\item $Z_{X,1}(t)=Z_X(t)$.
	\item There exists a polynomial $P_{X,r}(t)$ of degree $2g$ such that
	\[Z_{X,r}(t)=\frac{P_{X,r}(t)}{(1-t)(1-q^rt)}.\]
	\item $\what Z_{X,r}(1/q^rt)=\what Z_{X,r}(t)$.
\end{enumerate}

\begin{remark}
Let us show that $Z_{X,1}(t)=Z_X(t)$. A triple $E=(E_1,\cO_X,s_E)$ with $s_E\ne0$ and $\ch E_1=(1,k)$ is $\ta$-semistable if and only if $k=\mu(E_1)\le\ta$. In particular, it is always $k$-semistable.
The moduli space $M_k(1,k)=M_\infty(1,k)$ can be identified with $\Hilb^kX=S^kX$. Therefore $[\cM_k(1,k)]=\frac{[S^kX]}{q-1}$ and
$$Z_{X,1}(t)=(q-1)\sum_{k\ge0}[\cM_k(1,k)]t^k=\sum_{k\ge0}[S^kX]t^k=Z_X(t).$$
\end{remark}

%

\subsection{Explicit formula}

\begin{theorem} For every $r\geq 2$, we have
\begin{multline*}
\what Z_{X,r}(t)=q^{(g-1)\binom r2}\sum_{r_1+\dots+r_k=r-1}\frac{b_{r_1}\dots b_{r_k}}{\prod_{i=1}^{k-1}(1-q^{r_i+r_{i+1}})}\\
\rbr{\frac{\what Z_X(t)}{1-q^{r_1+1}t}
-\sum_{i=1}^{k-1}\frac{(1-q^{r_i+r_{i+1}})q^{r_{<i}}t\what Z_X(q^{r_{\le i}}t)}{(1-q^{r_{<i}}t)(1-q^{r_{\le i+1}+1}t)}-\frac{q^{r_{<k}}t\what Z_X(q^{r-1}t)}{1-q^{r_{<k}}t}}.
\end{multline*}

\end{theorem}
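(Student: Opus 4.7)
The plan is to begin from the identity
$$\what Z_{X,r}(t) \;=\; t^{1-g}\,q^{(g-1)\binom{r}{2}}\sum_{k\ge 0}\fr_k(r,kr)\,t^k,$$
and to compute the right-hand side by specializing Lemma \ref{preliminaryexplicit} to the on-wall case $\tau=k$, $d=kr$, then summing over $k\ge 0$. Here $\tau=d/r$ is not generic, so Theorem \ref{th:explicit} does not apply directly, but Lemma \ref{preliminaryexplicit}, being deduced from Theorem \ref{th:main}, is valid for arbitrary $\tau$.

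With $\tau=k\in\bZ$ and $d=kr$, all ceilings and floors in Lemma \ref{preliminaryexplicit} collapse: $\lceil r_{\ge i+1}\tau\rceil=r_{\ge i+1}k$, $\lfloor r_{\le i}\tau\rfloor=r_{\le i}k$, and $\lceil d-e-r_{\le p}\tau\rceil=(1+r_{\ge p+1})k-e$. Moreover $d-(r-1)\tau=k\in\bN$, so the boundary term $[S^kX]q^{A_0}$ contributes; a direct check shows $A_0=A\big|_{e=k}$, so its effect is exactly to extend the range of the $q^A$-sum from $0\le e\le k-1$ to $0\le e\le k$. The outcome is an expression for $\fr_k(r,kr)$ as a sum over compositions $r-1=r_1+\cdots+r_k$ of three pieces: a $q^A$-piece summed over $0\le e\le k$, a $-q^B$-piece summed over $0\le e\le k-1$, and, for each $p=1,\ldots,k-1$, a piece $-q^{C_p}(q^{D_p}-q^{E_p})\cdot(1-q^{r_p+r_{p+1}})/(1-q^{r_p+r_{p+1}+1})$ summed over $0\le e\le k-1$.

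Next, I would apply the two elementary generating-function identities
$$\sum_{k\ge 0}\sum_{e=0}^{k}[S^eX]\,q^{ae+bk}\,t^k=\frac{Z_X(q^{a+b}t)}{1-q^bt},\qquad\sum_{k\ge 0}\sum_{e=0}^{k-1}[S^eX]\,q^{ae+bk}\,t^k=\frac{q^bt\cdot Z_X(q^{a+b}t)}{1-q^bt},$$
to each of the three pieces, after writing the exponents $A$, $B$, $C_p+D_p$, $C_p+E_p$ as affine-linear functions $ae+bk$ plus an $(e,k)$-independent constant. Finally, multiplying by $t^{1-g}$ converts $Z_X(q^mt)$ into $q^{m(g-1)}\what Z_X(q^mt)$, after which the three pieces should produce exactly the three terms in the statement: the $q^A$-piece $\mapsto\what Z_X(t)/(1-q^{r_1+1}t)$, the pairs $q^{C_p+D_p}$ and $q^{C_p+E_p}$ combine (via the algebraic identity already established in the proof of Theorem \ref{th:explicit}, now in its on-wall form where $\{r_{\le p}\tau\}+\{r_{\ge p+1}\tau\}=0$) to yield each middle term, and the $-q^B$-piece yields the last term.

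The principal obstacle is the bookkeeping of exponents: each of $A$, $B$, $C_p\pm D_p$, $C_p+E_p$ is a sum of a linear form in $(e,k)$ and a constant depending on the composition $(r_1,\ldots,r_k)$ and on $g$, and one must separate these parts correctly to identify the resulting argument $q^m$ of $\what Z_X$ and the denominator $1-q^\ell t$. Once the $(e,k)$-coefficients and constants are tabulated, the cancellation of the $(g-1)$-shifts (coming from $Z_X\leadsto\what Z_X$) against the overall $q^{(g-1)\binom r2}$ prefactor, and the algebraic combination of the $D_p$ and $E_p$ contributions, becomes a mechanical verification parallel to the one in the proof of Theorem \ref{th:explicit}.
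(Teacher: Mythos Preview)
Your proposal is correct and follows essentially the same route as the paper's own proof: specialize Lemma~\ref{preliminaryexplicit} to $\tau=s\in\bN$, $d=rs$, observe that all roundings become trivial and that the boundary term $[S^sX]q^{A_0}$ extends the $q^A$-sum to $e=s$, then sum over $s\ge 0$ using the generating-function identity and pass from $Z_X$ to $\what Z_X$. The only minor deviation is that you invoke the rounding identity from the proof of Theorem~\ref{th:explicit} to combine the $D_p$ and $E_p$ contributions, whereas in this integral case both contributions already carry the same argument $Z_X(q^{r_{\le p}}t)$, so a direct partial-fractions combination $\frac{u}{1-u}-\frac{v}{1-v}=\frac{u-v}{(1-u)(1-v)}$ suffices; this is what the paper hides under ``elementary calculations.''
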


\begin{proof} For $s\in\bN$, we apply Lemma \ref{preliminaryexplicit} to the case $\tau=s$ and $d=rs$; then all roundings are trivial, and using the simple identities
$$\sum_{i=1}^{p-1}(r_i+r_{i+1})r_{\leq i}=r_{\leq p-1}r_{\leq p},\;\;\; \sum_{i=p+1}^{k-1}(r_i+r_{i+1})r_{\geq i+1}=r_{\geq p+1}r_{\geq p+2}$$
we can simplify the exponents $A_0$ to $E_p$ to
$$A_0=0,\; A=(r_1+1)(s-e),\; B=(1-g+s)(r-1)-r_k(s-e),$$
$$C_p+D_p=(1-g+s)r_{\leq p}-r_p(s-e),$$
$$C_p+E_p=(1-g+s)r_{\leq p}+(r_{p+1}+1)(s-e).$$
To perform the summation $\sum_{s\geq 0}\mathcal{F}_s(r,rs)t^s$, we use the identity
$$\sum_{s\geq 0}\sum_{e=0}^{s-1}[S^eX]q^{as+b(s-e)}t^s=\frac{q^{a+b}tZ_X(q^at)}{1-q^{a+b}t}.$$
After some elementary calculations, we arrive at the claimed formula.
\end{proof}

In particular, we obtain

\eql{\what Z_{X,2}(t)=q^{g-1}b_1\rbr{\frac{\what Z_X(t)}{1-q^2t}-\frac{t\what Z_X(qt)}{1-t}},
}{eq:z2}

\begin{multline}
\what Z_{X,3}(t)=q^{3g-3}b_2
\rbr{\frac{\what Z_X(t)}{1-q^3t}-\frac{t\what Z_X(q^2t)}{1-t}}\\
+\frac{q^{3g-3}b_1^2}{1-q^2}
\rbr{\frac{\what Z_X(t)}{1-q^2t}
-\frac{(1-q^2)t\what Z_X(qt)}{(1-t)(1-q^3t)}
-\frac{qt\what Z_X(q^2t)}{1-qt}}.
\label{eq:z3}
\end{multline}

The following result generalizes the counting miracle conjecture \cite[Conj.~15]{weng_zetaa} for arbitrary genus curves.

\begin{corollary}
The element $\fr_0(r,0)=q^{(1-g)\binom r2}Z_{X,r}(0)$ is equal to $\be_{r-1,0}$.
\end{corollary}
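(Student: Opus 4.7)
The first equality $\fr_0(r,0)=q^{(1-g)\binom r2}Z_{X,r}(0)$ is merely the constant term of the motivic identity
$$Z_{X,r}(t)=q^{(g-1)\binom r2}\sum_{k\ge 0}\fr_k(r,kr)t^k,$$
so the content of the corollary is the identification with $\be_{r-1,0}$. My plan is to extract $Z_{X,r}(0)$ by taking the appropriate leading coefficient in the explicit formula of the previous theorem, then match the result against Zagier's formula \eqref{eq:zagier}.

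First, I would rewrite the claim via the change of variables $\what Z_{X,r}(t)=t^{1-g}Z_{X,r}(t)$ and $\what Z_X(t)=t^{1-g}Z_X(t)$. Since $Z_X(0)=1$, the series $\what Z_X(t)$ has leading Laurent expansion $\what Z_X(t)=t^{1-g}(1+[X]t+\dots)$ about $t=0$. Consequently $Z_{X,r}(0)$ equals the coefficient of $t^{1-g}$ in $\what Z_{X,r}(t)$, and it suffices to read off this coefficient from the explicit formula proved just above.

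Inspecting the formula
\begin{multline*}
\what Z_{X,r}(t)=q^{(g-1)\binom r2}\sum_{r_1+\dots+r_k=r-1}\frac{b_{r_1}\dots b_{r_k}}{\prod_{i=1}^{k-1}(1-q^{r_i+r_{i+1}})}\\
\rbr{\frac{\what Z_X(t)}{1-q^{r_1+1}t}-\sum_{i=1}^{k-1}\frac{(1-q^{r_i+r_{i+1}})q^{r_{<i}}t\what Z_X(q^{r_{\le i}}t)}{(1-q^{r_{<i}}t)(1-q^{r_{\le i+1}+1}t)}-\frac{q^{r_{<k}}t\what Z_X(q^{r-1}t)}{1-q^{r_{<k}}t}},
\end{multline*}
the key observation is that every term except the first carries an extra factor of $t$. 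Each $\what Z_X(q^at)$ still has leading order $t^{1-g}$ (up to the constant $q^{a(1-g)}$), so the middle sum and last term have leading order $t^{2-g}$, which is strictly higher than $t^{1-g}$. Therefore only the term $\what Z_X(t)/(1-q^{r_1+1}t)$ contributes to $\coeff_{t^{1-g}}$, and its contribution is simply $1$. This yields
$$\fr_0(r,0)=q^{(1-g)\binom r2}Z_{X,r}(0)=\sum_{r_1+\dots+r_k=r-1}\frac{b_{r_1}\dots b_{r_k}}{\prod_{i=1}^{k-1}(1-q^{r_i+r_{i+1}})}.$$

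Finally, I would apply Zagier's formula \eqref{eq:zagier} with $\al=(r-1,0)$. Since $d=0$, every fractional part $\{(r_1+\dots+r_i)d/(r-1)\}$ vanishes, so all the powers $q^{(r_i+r_{i+1})\{\cdot\}}$ collapse to $1$, and Zagier's formula reduces to exactly the right-hand side above. Hence $\fr_0(r,0)=\be_{r-1,0}$, as claimed. The only delicate step is the singular evaluation at $t=0$; once one recognizes that the relevant datum is $\coeff_{t^{1-g}}$ and that the secondary terms are pushed to order $t^{2-g}$, the identification with Zagier's formula is automatic.
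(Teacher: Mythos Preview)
Your argument is correct and follows exactly the same route as the paper: extract $Z_{X,r}(0)$ from the explicit formula for $\what Z_{X,r}(t)$ by noting that only the first summand contributes at the lowest order, and then identify the resulting sum with Zagier's formula \eqref{eq:zagier} specialized to $\al=(r-1,0)$. You simply spell out in detail what the paper compresses into a single line.
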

\begin{proof}
We have
\[q^{(1-g)\binom r2}Z_{X,r}(0)=\sum_{r_1+\dots+r_k=r-1}
\frac{b_{r_1}\dots b_{r_k}}{\prod_{i=1}^{k-1}
(1-q^{r_i+r_{i+1}})}\]
and this is the formula for $\be_{r-1,0}$, see equation \eqref{eq:zagier}.
\end{proof}

\subsection{Special uniformity}
In this section we will verify the special uniformity conjecture of Weng \cite[Conj.~9]{weng_zeta} for rank $2$ and $3$ bundles.
Given a curve $X$ over $\bF_q$, one can define a zeta function
$\what\zeta^{\SL_r}_X(s)$
associated to the group $\SL_r$ (see \cite[\S3.1]{weng_special} and \cite[\S2.2]{weng_zeta}). For $r=2$, we have \cite[\S2.2]{weng_zetaa}
\begin{equation}
\what\zeta_X^{\SL_2}(-2s)
=\frac{\what\zeta_X(2s)}{1-q^{-2s+2}}+\frac{\what\zeta_X(2s-1)}{1-q^{2s}},
\label{eq:rk2 special}
\end{equation}
where $\what\zeta_X(s)=\what Z_X(q^{-s})$.
One can actually define the zeta function $\what Z^{\SL_r}_X(t)$ such that (note that we use $q^s$ and not $q^{-s}$ here)
\[\what\zeta^{\SL_r}_X(s)=\what Z^{\SL_r}_X(q^s).\]
Taking $t=q^{-2s}$ in \eqref{eq:rk2 special}, we obtain
\eql{\what Z^{\SL_2}_X(t)
=\frac{\what Z_X(t)}{1-q^2t}-\frac{t\what Z_X(qt)}{1-t}.}{eq:sl2}
Similarly, for $r=3$ the formula for $\what Z^{\SL_3}_X(t)$ is (see \cite[\S2.3]{weng_zetaa}, where we use $t=q^{-3s}$ and $\what\zeta_X(1)=\frac{P(1)}{q-1}$)
\begin{multline}
\what Z^{\SL_3}_X(t)
=\what Z_X(q^{-2})\rbr{\frac{\what Z_X(t)}{1-q^3t}-\frac{t\what Z_X(q^2t)}{1-t}}\\
+\frac{P_X(1)}{(q-1)(1-q^2)}
\rbr{\frac{\what Z_X(t)}{1-q^2t}-\frac{qt\what Z_X(q^2t)}{1-qt}} -\frac{P_X(1)}{q-1}{\frac{t\what Z_X(qt)}{(1-t)(1-q^3t)}}
\label{eq:sl3}
\end{multline}

On the other hand, let $\what\zeta_{X,r}(s)=\what Z_{X,r}(q^{-rs})$ and let $\al_{r,0}=Z_{X,r}(0)$.
The following conjecture was formulated by Weng

\begin{conjecture}[see {\cite[Conj.~9]{weng_zeta}} and {\cite[Theorem~5]{weng_special}}]
We have
\[\what\zeta_{X,r}(s)=c_{X,r}\what\zeta^{\SL_r}_X(-rs)\]
for some constant $c_{X,r}$.
\end{conjecture}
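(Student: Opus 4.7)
The plan is to produce explicit closed-form formulas for both sides of the conjectured identity and match them term-by-term, reading off the constant $c_{X,r}$ from the comparison. First, I would extract from \cite{weng_zeta,weng_special} an explicit formula for $\what Z^{\SL_r}_X(t)$ generalizing the expressions recorded in \eqref{eq:sl2} and \eqref{eq:sl3}. Weng's construction of $\what\zeta^{\SL_r}_X(s)$ proceeds via Eisenstein series / parabolic reduction, and unpacking it for type $A_{r-1}$ should yield a sum indexed by the standard parabolics of $\SL_r$ --- equivalently, compositions of $r$ --- each summand being a product of shifted factors $\what Z_X(q^{a_i}t)$ weighted by an explicit rational function in $q$ arising from the root system.

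Second, I would reshape the formula for $\what Z_{X,r}(t)$ from the previous theorem so that its summation range matches Weng's. Its outer sum is indexed by compositions of $r-1$, and the three terms inside the bracket (a pure $\what Z_X(t)$ contribution, an inner sum over $i$, and a pure $\what Z_X(q^{r-1}t)$ contribution) can be re-organized by inserting an additional part of size $1$ into each composition $(r_1,\dots,r_k)$ of $r-1$ at the initial position, at an interior slot $i$, or at the final position. This should recast $\what Z_{X,r}(t)$ as a single sum indexed by compositions of $r$, aligned with the parabolic indexing on Weng's side.

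Third, once the indexings are matched, the identity reduces to a family of rational identities in $t$: for each composition of $r$, the rational coefficient multiplying the corresponding product $\prod_i \what Z_X(q^{a_i}t)$ on our side must coincide, up to a global constant $c_{X,r}$, with the coefficient on Weng's side. These identities should be verifiable by a partial-fraction argument using the functional equation $\what Z_X(1/qt)=\what Z_X(t)$ together with the explicit expression for $b_r$ recorded in Section~2. The constant $c_{X,r}$ itself is then determined by matching leading residues at, say, $t=q^{-r}$, and should take the form of an explicit product of the $b_i$ with a power of $q$.

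The principal obstacle is extracting Weng's formula for general $r$ in a sufficiently clean form: the parabolic-reduction procedure naturally produces an alternating sum over the Weyl group $S_r$, and massaging this into a composition-indexed expression requires substantial combinatorial bookkeeping. As a warm-up I would first verify the cases $r=2$ and $r=3$ directly, comparing \eqref{eq:z2} with \eqref{eq:sl2} (which is immediate and yields $c_{X,2}=q^{g-1}b_1$) and \eqref{eq:z3} with \eqref{eq:sl3}; these low-rank checks both confirm the re-indexing bijection above and predict the correct general shape of $c_{X,r}$.
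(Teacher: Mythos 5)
The statement you are addressing is a \emph{conjecture}: the paper does not prove it in general, and neither do you. What the paper actually does is exactly your ``warm-up'': it compares \eqref{eq:z2} with \eqref{eq:sl2} to get $\what Z_{X,2}(t)=q^{g-1}b_1\what Z^{\SL_2}_X(t)$, and \eqref{eq:z3} with \eqref{eq:sl3}, using the identity $b_2=b_1\what Z_X(q)=b_1\what Z_X(q^{-2})$, to get $\what Z_{X,3}(t)=q^{3g-3}b_1\what Z^{\SL_3}_X(t)$; from these it only \emph{suggests} the precise constant $c_{X,r}=q^{(g-1)\binom r2}b_1$ and explicitly notes that a proof of the general conjecture (announced by Weng) is not available beyond small rank. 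So the part of your proposal that is actually carried out coincides with the paper; everything beyond it is a programme, not a proof.

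The gap in that programme is concrete. First, the key input --- a closed, composition-indexed formula for $\what Z^{\SL_r}_X(t)$ for general $r$ --- is never derived; you yourself identify converting Weng's alternating Weyl-group/parabolic expression into such a form as the principal obstacle, and that is precisely the hard step, not bookkeeping. Second, the proposed term-by-term matching ``for each composition of $r$, up to a global constant'' is not well-posed as stated: on the stable-pairs side the coefficient of a given product of shifted factors carries the composition-dependent prefactor $b_{r_1}\cdots b_{r_k}$, and these $b_{r_i}$ are themselves products of special values $\what Z_X(q^i)$ (by the formula for $b_r$ in Section 2), while on Weng's side such special values appear explicitly as coefficients (e.g.\ $\what Z_X(q^{-2})$ in \eqref{eq:sl3}). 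Already at $r=3$ the match requires trading $b_2$ for $b_1\what Z_X(q^{-2})$, i.e.\ special values migrate between the ``constant'' and the ``coefficient''; for general $r$ one must specify how these factors are to be redistributed before any composition-by-composition comparison, and your proposed insertion of a part of size $1$ into compositions of $r-1$ is not injective onto compositions of $r$ (distinct insertion slots can give the same composition), so even the indexing bijection needs care. Until the general $\SL_r$ formula is written down and this reorganization is made precise, the argument establishes only the cases $r=2,3$, which is what the paper already contains.
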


Using $t=q^{-rs}$, we can write it also in the form
\[\what Z_{X,r}(t)=c_{X,r}\what Z^{\SL_r}_X(t).\]
The proof of this conjecture was announced in \cite[Theorem 5]{weng_special}. At the moment there exists only a proof for an elliptic curve and small ranks \cite{weng_zetaa}. Let us check it for $r=2$ and $r=3$.

The formulas \eqref{eq:z2} and \eqref{eq:sl2} imply
\eq{\what Z_{X,2}(t)=q^{g-1}b_1 Z^{\SL_2}_X(t).}
The formulas \eqref{eq:z3}, \eqref{eq:sl3} and $b_2=b_1\what Z_X(q)=b_1\what Z_X(q^{-2})$ imply
\eq{\what Z_{X,3}(t)=q^{3g-3}b_1 Z^{\SL_3}_X(t).}
These equations suggest that the precise form of the uniformity conjecture should be
\eq{\what Z_{X,r}(t)=q^{(g-1)\binom r2}b_1\what Z^{\SL_r}_X(t).}

\section{Appendix. Slice and inversion formulas}

\subsection{Slice formula}
Let $\Ga$ be a lattice with a stability function $\mu$ and let $\Ga_+\subset\Ga$ be a semigroup such that any element in $\Ga_+$ has a finite number of partitions, that is, the set
\[\cP_\al=\sets{(\al_1,\dots,\al_k)}{\al_i\in\Ga_+,\,\sum\al_i=\al}\]
is finite for any $\al\in\Ga_+$. For example we could take $\Ga=\bZ^n$ and $\Ga_+=\bN^n\ms\set0$.
Let $R$ be a (non-commutative) ring and let
\[(a_\al)_{\al\in\Ga_+},\qquad (b_\al)_{\al\in\Ga_+}\]
be two families of elements in $R$ satisfying
\begin{equation}
b_\al=\sum_{\over{(\al_1,\dots,\al_k)\in\cP_\al}{\mu(\al_1)>\dots>\mu(\al_k)}}a_{\al_1}\dots a_{\al_k}\qquad \forall\al\in\Ga_+.
\label{eq:b as a}
\end{equation}

\begin{remark}
Usually we interpret the elements $a_\al$ as counting semistable objects having class \al and the elements $b_\al$ as counting arbitrary objects having class \al. Then the above formula corresponds to the Harder-Narasimhan filtration.
\end{remark}

It was proved in \cite{reineke_harder-narasimhan}, \cite[Theorem 3.2]{mozgovoy_poincare} that \eqref{eq:b as a} is equivalent to
\begin{equation}
a_\al=\sum_{\over{(\al_1,\dots,\al_k)\in\cP_\al}{\mu(\al'_i)>\mu(\al)\ \forall 1\le i<k}}(-1)^{k-1}b_{\al_1}\dots b_{\al_k}\qquad \forall \al\in\Ga_+,
\label{eq:a as b}
\end{equation}
where $\al'_i=\al_1+\dots+\al_i$. In this section we will prove similar formulas for the following \lqq slice invariants\rqq
\begin{equation}
a^{\le\ta}_\al=\sum_{\over{(\al_1,\dots,\al_k)\in\cP_\al}{\ta\ge\mu(\al_1)>\dots>\mu(\al_k)}}a_{\al_1}\dots a_{\al_k},\qquad
a^{\ge\ta}_\al=\sum_{\over{(\al_1,\dots,\al_k)\in\cP_\al}{\mu(\al_1)>\dots>\mu(\al_k)\ge\ta}}a_{\al_1}\dots a_{\al_k}
\label{eq:a ta}
\end{equation}
as well as similarly defined $a^{<\ta}_\al$, $a^{>\ta}_\al$ and
\begin{equation}
a^{[\ta',\ta]}_\al=\sum_{\over{(\al_1,\dots,\al_k)\in\cP_\al}{\ta\ge\mu(\al_1)>\dots>\mu(\al_k)\ge\ta'}}a_{\al_1}\dots a_{\al_k}
\label{eq:ta-ta'}
\end{equation}
defined for any $\ta,\,\ta'\in\bR$.

\begin{theorem}
\label{th:slices}
We have
\begin{enumerate}
	\item For $\mu(\al)\le\ta$
\[a^{\le\ta}_\al
=\sum_{\over{(\al_1,\dots,\al_k)\in\cP_\al}{\mu(\al'_i)>\ta\ \forall 1\le i<k}}(-1)^{k-1}b_{\al_1}\dots b_{\al_k}.\]
	\item For $\mu(\al)\ge\ta$
\[a_\al^{\ge\ta}
=\sum_{\over{(\al_1,\dots,\al_k)\in\cP_\al}{\mu(\al-\al'_i)<\ta\ \forall 1\le i<k}}(-1)^{k-1}b_{\al_1}\dots b_{\al_k}.\]
	\item For $\mu(\al)<\ta$
\[a^{<\ta}_\al
=\sum_{\over{(\al_1,\dots,\al_k)\in\cP_\al}{\mu(\al'_i)\ge\ta\ \forall 1\le i<k}}(-1)^{k-1}b_{\al_1}\dots b_{\al_k}.\]
	\item For $\mu(\al)>\ta$
\[a_\al^{>\ta}
=\sum_{\over{(\al_1,\dots,\al_k)\in\cP_\al}{\mu(\al-\al'_i)\le\ta\ \forall 1\le i<k}}(-1)^{k-1}b_{\al_1}\dots b_{\al_k}.\]
	\item For $\ta'\le\mu(\al)\le\ta$
\[a^{[\ta',\ta]}_\al
=\sum_{\over{(\al_1,\dots,\al_k)\in\cP_\al}{
\mu(\al'_i)>\ta\text{ or }\mu(\al-\al'_i)<\ta'\ \forall 1\le i<k
}}(-1)^{k-1}b_{\al_1}\dots b_{\al_k}.\]
\end{enumerate}
\end{theorem}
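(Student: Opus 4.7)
The strategy is direct combinatorial expansion, viewing all five identities in a completed Hall-like algebra with generating functions $\un_{\ta_0} = 1 + \sum_{\mu(\al) = \ta_0} a_\al x^\al$ and $B = 1 + \sum_\al b_\al x^\al$. The HN identity \eqref{eq:b as a} is equivalent to the factorization $B = \prod^{\curvearrowright}_{\ta_0 \in \bR} \un_{\ta_0}$ in decreasing slope order, and the slices are encoded in the subproducts $\un_I := \prod^{\curvearrowright}_{\ta_0 \in I} \un_{\ta_0}$. This yields the key factorizations $B = \un_{>\ta} \un_{\le\ta} = \un_{\ge\ta} \un_{<\ta}$ and, for $\ta' \le \ta$, $B = \un_{>\ta} \un_{[\ta',\ta]} \un_{<\ta'}$; the last reduces (5) to a combination of the one-sided formulas after inversion.

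For (1), substitute the HN expansion \eqref{eq:b as a} for each $b_{\al_j}$ on the right-hand side and collect the coefficient of $a_{\ga_1} \cdots a_{\ga_n}$ for each composition $(\ga_1, \dots, \ga_n) \in \cP_\al$. This coefficient $c(\ga_1, \dots, \ga_n)$ is a signed count, with sign $(-1)^{k-1}$ for a $k$-block partition, of ways to cut the sequence into consecutive blocks so that every block is HN-decreasing internally and every block-boundary partial sum $\ga_1 + \dots + \ga_i$ has slope $> \ta$. Classify each gap $i \in \{1, \dots, n-1\}$ by the two local conditions ``$\mu(\ga_i) > \mu(\ga_{i+1})$ or not'' and ``$\mu(\ga_1+\dots+\ga_i) > \ta$ or not,'' giving four cases \emph{MUST}, \emph{MUST-NOT}, \emph{FREE}, \emph{INFEASIBLE}. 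A FREE gap contributes a cancelling factor $1 - 1 = 0$, an INFEASIBLE gap kills the term, and otherwise $c = (-1)^N$ with $N$ the number of MUST gaps.

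The main step is to rule out contributions with $N \ge 1$ under the hypothesis $\mu(\al) \le \ta$. If some gap were MUST, its partial-sum slope would exceed $\ta$ while the final partial-sum slope equals $\mu(\al) \le \ta$; look at the first subsequent gap at which the partial-sum slope drops back to $\le \ta$. The absence of FREE gaps forces $\mu(\ga_l) \le \mu(\ga_{l+1})$ along the intervening window, and a direct propagation check (if the partial-sum slope at a gap is $> \ta$ and the next local slope is weakly larger than the previous one, then the next partial-sum slope is again $> \ta$) produces a contradiction with the drop. Hence only the strictly HN-decreasing sequences with $\mu(\ga_1) \le \ta$ contribute, each with $c = 1$, which is precisely $a^{\le\ta}_\al$.

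Formulas (2)--(4) are proved by entirely analogous case analyses: (2) uses the mirror classification tracking the right partial sums $\al - \al'_i$ against $\ta$ under $\mu(\al) \ge \ta$; (3) and (4) are the strict variants obtained by swapping $>$ with $\ge$ in the gap conditions. For (5), one can either run a combined two-threshold case analysis—at each gap the admissibility becomes the disjunction ``left partial-sum slope $> \ta$ or right partial-sum slope $< \ta'$,'' and the same kind of sign cancellation with a two-sided propagation argument leaves only HN-decreasing sequences with $\ta' \le \mu(\ga_n)$ and $\mu(\ga_1) \le \ta$—or derive (5) by applying (1) and (2) to the factorization $\un_{[\ta',\ta]} = \un_{>\ta}^{-1} B \un_{<\ta'}^{-1}$. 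The hardest point throughout is the propagation argument ruling out the ``MUST-only'' configurations; everything else is bookkeeping.
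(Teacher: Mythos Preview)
Your overall strategy is sound and is essentially the ``dual'' of the paper's: the paper substitutes the inversion formula \eqref{eq:a as b} into the \emph{definition} of $a^{[\ta',\ta]}_\al$ and reduces to a signed count over coarsenings (handled by an inductive collapsing argument and Reineke's lemma), whereas you substitute the HN expansion \eqref{eq:b as a} into the \emph{claimed right-hand side} and reduce to a signed count over refinements. Both lead to the same kind of sign-cancellation combinatorics, and your MUST/MUST-NOT/FREE/INFEASIBLE gap classification is a clean way to organise it.

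However, your ``direct propagation check'' is not correct as stated. The local implication you write down --- $\mu(\ga'_l)>\ta$ together with $\mu(\ga_{l+1})\ge\mu(\ga_l)$ forces $\mu(\ga'_{l+1})>\ta$ --- is simply false: take $\ta=0$, $\ga'_{l-1}=(1,2)$, $\ga_l=(1,-1)$, $\ga_{l+1}=(10,-10)$, so $\mu(\ga'_l)=\tfrac12>0$ and $\mu(\ga_{l+1})=\mu(\ga_l)=-1$, yet $\mu(\ga'_{l+1})=-\tfrac34<0$. What actually rescues the argument is the \emph{global} absence of FREE gaps, which you use only inside the window $[j,m-1]$. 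The clean fix is a backward argument: if gap $l$ is MUST and gap $l+1$ is MUST-NOT, then $\mu(\ga_{l+1})<\mu(\ga'_{l+1})\le\ta$, hence $\mu(\ga_l)\le\ta$; since $\mu(\ga'_l)>\ta$ this forces $\mu(\ga'_{l-1})>\ta$, so gap $l-1$ (being non-FREE) is again MUST with $\mu(\ga_{l-1})\le\mu(\ga_l)\le\ta$, and inductively $\mu(\ga_1)=\mu(\ga'_1)>\ta$ while $\mu(\ga_1)\le\ta$, a contradiction. Equivalently: with no FREE/INFEASIBLE gaps, the MUST and MUST-NOT types cannot both occur, so a single MUST gap forces all gaps MUST and hence $\mu(\al)>\ta$. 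Once you replace the faulty local step by this, your proof of (1)--(4) goes through; for (5) your direct two-threshold version of the same argument works, while the factorisation route $\un_{[\ta',\ta]}=\un_{>\ta}^{-1}B\,\un_{<\ta'}^{-1}$ would still need explicit inverse formulas and is not a shortcut.
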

\begin{proof}
It is enough to prove the last formula.
We will denote the set of sequences appearing in the last sum by $\cP_\al(\ta,\ta')$. The sequences in
\[\cP_\al^\st=\sets{(\al_1,\dots,\al_k)\in\cP_\al}{\mu(\al'_i)>\mu(\al)\ \forall 1\le i<k}\]
will be called stable. Note that if $\ub\al\in\cP_\al^\st$ and $\ub\be\in\cP_\be^\st$ are stable sequences and $\mu(\al)>\mu(\be)$ then the concatenation $(\ub\al,\ub\be)$ is again stable.
Given a sequence $\ub\al=(\al_1,\dots\al_k)\in\cP_\al$, let $\cA_{\ub\al}(\ta,\ta')$ consist of all sets
\[S=\set{s_1<\dots<s_{r-1}}\subset\set{1,\dots,k-1}\]
such that the sequences (we let $s_0=0$, $s_r=k$)
\[(\al_{s_{i-1}+1},\dots,\al_{s_i})\]
are stable and the coarsening sequence $\ub\al_S=(\be_1,\dots,\be_r)$ of elements $\be_i=\al_{s_{i-1}+1}+\dots+\al_{s_i}$ satisfies
\[\ta\ge\mu(\be_1)>\dots>\mu(\be_r)\ge\ta'.\]
After the substitution of \eqref{eq:a as b} into \eqref{eq:ta-ta'} we can see that the formula we want to prove follows from
\begin{equation}
\sum_{S\in\cA_{\ub\al}(\ta,\ta')}(-1)^{\n S}=\case{0&\ub\al\notin\cP_\al(\ta,\ta')\\1&\ub\al\in\cP_\al(\ta,\ta').}
\label{eq:sum}
\end{equation}
If $\ub\al\in\cP_\al(\ta,\ta')$ then for any $i<k$ we have either $\mu(\al'_i)>\ta\ge\mu(\al)$ or $\mu(\al-\al'_i)<\ta'\le\mu(\al)$ which also implies $\mu(\al'_i)>\mu(\al)$. Therefore $\ub\al$ is stable. This implies that if \ub\al is non-stable then the right hand side of \eqref{eq:sum} is zero. On the other hand the left hand side of \eqref{eq:sum} is also zero as $\cA_{\ub\al}(\ta,\ta')$ is empty for non-stable $\ub\al$.

From now on we will assume that $\ub\al$ is stable.
Given an element $1\le i<k$, there exists $S\in\cA_{\ub\al}(\ta,\ta')$ with $i\in S$ if and only if $\set i\in\cA_{\ub\al}(\ta,\ta')$. Moreover, $\set i\in\cA_{\ub\al}(\ta,\ta')$ if and only if the sequences $(\al_1,\dots,\al_i)$ and $(\al_{i+1},\dots,\al_k)$ are stable and
\[\ta\ge\mu(\al'_i)>\mu(\al-\al'_i)\ge\ta'.\]

Assume that $\set i\notin\cA_{\ub\al}(\ta,\ta')$ for some $1\le i<k$. Consider the sequence
\[\ub\al'=(\al_1,\dots,\al_{i-1},\al_i+\al_{i+1},\dots,\al_k)\]
and note that $\cA_{\ub\al}(\ta,\ta')=\cA_{\ub\al'}(\ta,\ta')$.
We claim that $\ub\al\in\cP_\al(\ta,\ta')$ if and only if $\ub\al'\in\cP_\al(\ta,\ta')$. If this is false, then $\ub\al'\in\cP_\al(\ta,\ta')$, $\mu(\al'_i)\le\ta$ and $\mu(\al-\al'_i)\ge\ta'$. Therefore
\[\ta\ge\mu(\al'_i)>\mu(\al-\al'_i)\ge\ta'.\]
For any $1\le j<i$, we have either $\mu(\al'_j)>\ta\ge\mu(\al'_i)$ or $\mu(\al-\al'_j)<\ta'\le\mu(\al-\al'_i)$. The last inequality
implies
\[\mu(\al'_i-\al'_j)<\mu(\al-\al'_i)<\mu(\al'_i)\]
and therefore $\mu(\al'_j)>\mu(\al'_i)$. This implies that the sequence $(\al_1,\dots,\al_i)$ is stable. Similarly we can show that $(\al_{i+1},\dots,\al_k)$ is stable. Therefore $\set i\in\cA_{\ub\al}(\ta,\ta')$ contradicting to our assumption. This proves that $\ub\al\in\cP_\al(\ta,\ta')$ if and only if $\ub\al'\in\cP_\al(\ta,\ta')$ and we can apply induction to $\ub\al'$.

Assume that $\set i\in\cA_{\ub\al}(\ta,\ta')$ for any $1\le i<k$. 
Then the inequalities $\ta\ge\mu(\be_1)$, $\mu(\be_r)\ge\ta'$ in the definition of $\cA_{\ub\al}(\ta,\ta')$ are automatically satisfied and we can write $\cA_{\ub\al}(\ta,\ta')=\cA_{\ub\al}(\infty,-\infty)$.
Moreover, $\ub\al\in\cP_\al(\ta,\ta')$ if and only if $k=1$.
Now we apply \cite[Lemma 5.4]{reineke_harder-narasimhan}
\[\sum_{S\in\cA_{\ub\al}(\infty,-\infty)}(-1)^{\n S}=\case{0,&k>1,\\1,&k=1.}\]
\end{proof}

\subsection{Inversion formula}
\label{subs:inversion}
Consider the ring of power series $R\pser{\Ga_+}$ and its elements
\[b=1+\sum b_\al y^\al,\qquad a^{\le\ta}=1+\sum a^{\le\ta}_\al y^\al,\qquad a^{\ge\ta}=1+\sum a^{\ge\ta}_\al y^\al,\]
and similarly defined elements $a^{<\ta}$, $a^{>\ta}$. The elation \eqref{eq:b as a} implies $b=a^{\ge\ta}a^{<\ta}$.

\begin{theorem}
We have 
\begin{enumerate}
	\item $(a^{\le\ta})\inv=1+\sum_{\mu(\al)\le\ta} c^{\le\ta}_\al y^\al$, where
\[c^{\le\ta}_\al
=\sum_{\over{(\al_1,\dots,\al_k)\in\cP_\al}{\mu(\al-\al'_i)\le\ta\ \forall 1\le i<k}}(-1)^k b_{\al_1}\dots b_{\al_k}.\]
	\item $(a^{\ge\ta})\inv=1+\sum_{\mu(\al)\ge\ta} c^{\ge\ta}_\al y^\al$, where
	\[c^{\ge\ta}_\al
=\sum_{\over{(\al_1,\dots,\al_k)\in\cP_\al}{\mu(\al'_i)\ge\ta\ \forall 1\le i<k}}(-1)^k b_{\al_1}\dots b_{\al_k}.\]
	\item $(a^{<\ta})\inv=1+\sum_{\mu(\al)<\ta} c^{<\ta}_\al y^\al$, where
\[c^{<\ta}_\al
=\sum_{\over{(\al_1,\dots,\al_k)\in\cP_\al}{\mu(\al-\al'_i)<\ta\ \forall 1\le i<k}}(-1)^k b_{\al_1}\dots b_{\al_k}.\]
	\item $(a^{>\ta})\inv=1+\sum_{\mu(\al)>\ta} c^{>\ta}_\al y^\al$, where
	\[c^{>\ta}_\al
=\sum_{\over{(\al_1,\dots,\al_k)\in\cP_\al}{\mu(\al'_i)>\ta\ \forall 1\le i<k}}(-1)^k b_{\al_1}\dots b_{\al_k}.\]
\end{enumerate}
\end{theorem}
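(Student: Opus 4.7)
My plan is to reduce each of the four inversion formulas to the already proved slice formula (Theorem~\ref{th:slices}) by upgrading the Harder--Narasimhan recursion \eqref{eq:b as a} to a genuine factorization of $b$ in $R\pser{\Ga_+}$. Splitting any decreasing slope sequence at the threshold $\ta$ into its $>\ta$-part and its $\le\ta$-part (resp.\ its $\ge\ta$-part and its $<\ta$-part) gives the two factorizations $b=a^{>\ta}\cdot a^{\le\ta}=a^{\ge\ta}\cdot a^{<\ta}$ in $R\pser{\Ga_+}$. From these I read off $(a^{\le\ta})\inv=b\inv\cdot a^{>\ta}$, $(a^{>\ta})\inv=a^{\le\ta}\cdot b\inv$, $(a^{\ge\ta})\inv=a^{<\ta}\cdot b\inv$, and $(a^{<\ta})\inv=b\inv\cdot a^{\ge\ta}$. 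The support statements on each inverse follow from a direct geometric-series expansion combined with the property that $\mu(\al+\be)$ always lies between $\mu(\al)$ and $\mu(\be)$.

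For part (1) I would then expand the two factors on the right-hand side of $(a^{\le\ta})\inv=b\inv\cdot a^{>\ta}$. The standard geometric series gives $(b\inv)_\ga=\sum_{(\ga_1,\dots,\ga_l)\in\cP_\ga}(-1)^l b_{\ga_1}\cdots b_{\ga_l}$, while $a^{>\ta}_\be$ for $\mu(\be)>\ta$ is given by Theorem~\ref{th:slices}(4) with sign $(-1)^{n-1}$ and constraint $\mu(\be-\be'_i)\le\ta$. Multiplying and reindexing as a sum over partitions $\ub\al=(\al_1,\dots,\al_k)\in\cP_\al$, the contribution of a fixed $\ub\al$ decomposes into a pure $b\inv$-term of sign $(-1)^k$ together with, for each index $1\le j<k$, a ``split'' $\al=\al'_j+\be$ with $\be=\al_{j+1}+\dots+\al_k$ contributing $(-1)^{j}\cdot(-1)^{k-j-1}=(-1)^{k-1}$, provided the split is admissible, \ie $\mu(\al-\al'_j)>\ta$ and $\mu(\al-\al'_p)\le\ta$ for all $j<p<k$.

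The crucial observation, and the step which makes the whole argument collapse, is that for fixed $\ub\al$ there is \emph{at most one} admissible index $j$: necessarily the largest $j<k$ with $\mu(\al-\al'_j)>\ta$, if such exists. Hence the total contribution of $\ub\al$ is $(-1)^k$ when $\mu(\al-\al'_i)\le\ta$ for every $i<k$ and $(-1)^k+(-1)^{k-1}=0$ otherwise, which is exactly the formula for $c^{\le\ta}_\al$. Parts (2)--(4) follow by the same scheme: part (4) uses the factorization $b=a^{>\ta}\cdot a^{\le\ta}$ together with Theorem~\ref{th:slices}(1), while parts (2) and (3) use $b=a^{\ge\ta}\cdot a^{<\ta}$ together with Theorem~\ref{th:slices}(2) and (3). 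The main obstacle I anticipate is purely bookkeeping: each of the four cases mixes strict and non-strict inequalities on either $\mu(\al'_i)$ or $\mu(\al-\al'_i)$, and one has to pair each factorization with the correct part of the slice formula so that the ``at most one admissible cut'' argument delivers the stated constraint rather than its complement.
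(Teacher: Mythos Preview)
Your argument is correct and is essentially the same as the paper's: both rest on the factorizations $b=a^{>\ta}a^{\le\ta}=a^{\ge\ta}a^{<\ta}$ together with the slice formula of Theorem~\ref{th:slices}, and both conclude with the observation that each product $b_{\al_1}\cdots b_{\al_k}$ contributes in at most two ways, with opposite signs. The only difference is organizational: the paper checks directly that $c^{\ge\ta}\cdot b=a^{<\ta}$ (multiplying the candidate inverse by $b$, so that a constrained $\ub\al$ appears once from $c^{\ge\ta}_{\al-\al_k}\cdot b_{\al_k}$ and, if $\mu(\al)\ge\ta$, a second time from $c^{\ge\ta}_\al\cdot 1$), whereas you compute $b\inv\cdot a^{>\ta}$ and simplify, which costs one extra geometric-series expansion of $b\inv$ but yields the identical ``at most one admissible cut'' cancellation.
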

\begin{proof}
It is enough to prove the second formula.
Let $c^{\ge\ta}=1+\sum c^{\ge\ta}_\al y^\al$. To prove that $(a^{\ge\ta})\inv=c^{\ge\ta}$ we have to show that
\[(c^{\ge\ta})\inv a^{<\ta}=b\]
or equivalently $a^{<\ta}=c^{\ge\ta}b$.
We proved in Theorem \ref{th:slices} that, for $\mu(\al)<\ta$, we have
\[a^{<\ta}_\al=
\sum_{\over{(\al_1,\dots,\al_k)\in\cP_\al}{\mu(\al'_i)\ge\ta\ \forall 1\le i<k}}(-1)^{k-1} b_{\al_1}\dots b_{\al_k}.\]
Let us consider the coefficient of $y^\al$ in $c^{\ge\ta}b$. For $(\al_1,\dots,\al_k)\in\cP_\al$ the product $b_{\al_1}\dots b_{\al_k}$ occurs in this coefficient if and only only if $\mu(\al'_i)\ge\ta$ for $1\le i<k$. If $\mu(\al)<\ta$ then it occurs once as a summand of $c^{\ge\ta}_{\al-\al_k}\cdot b_{\al_k}$. Therefore the coefficient of $y^\al$ in $c^{\ge\ta}b$ coincides with $a^{<\ta}_\al$. If $\mu(\al)\ge\ta$ then the product $b_{\al_1}\dots b_{\al_k}$ occurs twice, with different signs: as a summand of $c^{\ge\ta}_\al\cdot 1$ and as a summand of $c^{\ge\ta}_{\al-\al_k}\cdot b_{\al_k}$. Therefore the coefficient of $y^\al$ in $c^{\ge\ta}b$ is zero.
\end{proof}

\begin{theorem}
Let $a^{[\ta'\ta]}=1+\sum_{\ta'\le\mu(\al)\le\ta}a_\al^{[\ta',\ta]}y^\al$ and \[(a^{[\ta',\ta]})\inv=c^{[\ta',\ta]}=1+\sum_{\ta'\le\mu(\al)\le\ta}c^{[\ta',\ta]}_\al y^\al.\]
Then
\[c^{[\ta',\ta]}_\al
=\sum_{\over{(\al_1,\dots,\al_k)\in\cP_\al}{
\mu(\al'_i)\ge\ta'\text{ and }\mu(\al-\al'_i)\le\ta\ \forall 1\le i<k
}}(-1)^{k}b_{\al_1}\dots b_{\al_k}.\]
\end{theorem}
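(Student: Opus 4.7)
The plan is to reduce the claim to a product identity and verify it by a sign-cancellation argument. Splitting any strictly decreasing-slope sequence with slopes $\ge\ta'$ into a prefix with slopes $>\ta$ and a suffix with slopes in $[\ta',\ta]$ yields the factorization
$$a^{\ge\ta'}=a^{>\ta}\cdot a^{[\ta',\ta]}$$
in $R\pser{\Ga_+}$. Inverting and invoking the previous inversion theorem to identify $(a^{\ge\ta'})\inv=c^{\ge\ta'}$ and $(a^{>\ta})\inv=c^{>\ta}$, this is equivalent to the product formula
$$c^{[\ta',\ta]}=c^{\ge\ta'}\cdot a^{>\ta}.$$
It thus suffices to expand the right side using the explicit $b$-formulas from Theorem \ref{th:slices}(4) and from the preceding theorem, and to recognize the stated formula in the result.

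Concretely, for $\al$ with $\ta'\le\mu(\al)\le\ta$ the term $a^{>\ta}_\al$ vanishes, and after substitution every contribution to the coefficient of $y^\al$ in $c^{\ge\ta'}\cdot a^{>\ta}$ is indexed by a sequence $(\al_1,\dots,\al_n)\in\cP_\al$ together with a cut position $l\in\set{1,\dots,n}$: the case $l=n$ (the $\be=0$ term) contributes with sign $(-1)^n$, and each $1\le l<n$ (a concatenation) contributes with sign $(-1)^{n-1}$. The conditions on the sequence are $\mu(\al'_i)\ge\ta'$ for $i\le l$ in all cases, plus $\mu(\al-\al'_l)>\ta$ and $\mu(\al-\al'_i)\le\ta$ for $l<i<n$ when $l<n$.

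The heart of the argument, where I expect the real work, is the sign-cancellation grouped by $\ub\al$: for a sequence with $\mu(\al'_i)\ge\ta'$ for all $i<n$ and some $\mu(\al-\al'_j)>\ta$, one pairs the $l=n$ contribution with the (unique, by maximality) $l<n$ contribution at $l=\max\set{i<n:\mu(\al-\al'_i)>\ta}$. The main obstacle is ruling out stray contributions from $l<n$ for sequences with $\mu(\al'_j)<\ta'$ for some $j$. The key ingredient here is the weighted-average identity
$$r_{>l}\,\mu(\al-\al'_l)=(r_{\le j}-r_{\le l})\,\mu(\al_{l+1}+\dots+\al_j)+r_{>j}\,\mu(\al-\al'_j),$$
which, combined with $\mu(\al-\al'_l)>\ta\ge\mu(\al-\al'_j)$ for $l<j<n$, forces $\mu(\al_{l+1}+\dots+\al_j)>\ta\ge\ta'$; then $\mu(\al'_j)$ is a weighted average of $\mu(\al'_l)\ge\ta'$ and something $>\ta'$, hence $\ge\ta'$. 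Thus any $l<n$ contribution automatically satisfies $\mu(\al'_i)\ge\ta'$ for all $i<n$, so the pairing is bijective and cancellation is complete. The surviving terms are precisely the sequences satisfying both $\mu(\al'_i)\ge\ta'$ and $\mu(\al-\al'_i)\le\ta$ for all $1\le i<n$, contributing with sign $(-1)^n$, exactly matching the claimed formula.
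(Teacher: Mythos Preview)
Your argument is correct and takes a genuinely different route from the paper's. The paper proceeds by a descending induction on $\ta$: it uses the one-step factorization $a^{\ta}\,a^{[\ta',\ta)}=a^{[\ta',\ta]}$, which upon inversion becomes the recursion $c^{[\ta',\ta]}\,a^{\ta}=c^{[\ta',\ta)}$, and then verifies coefficient by coefficient (via a three-case analysis of where a product $b_{\al_1}\dots b_{\al_k}$ can arise) that the proposed formula for $c^{[\ta',\ta]}$ satisfies this recursion, with the base case $c^{[\ta',+\infty]}=c^{\ge\ta'}$ supplied by the preceding theorem. You instead use the global factorization $a^{\ge\ta'}=a^{>\ta}\cdot a^{[\ta',\ta]}$ to write $c^{[\ta',\ta]}=c^{\ge\ta'}\cdot a^{>\ta}$ directly, and then perform a single sign-cancellation over cut positions. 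Your approach avoids induction and is arguably cleaner; the paper's approach has the mild advantage that its case analysis stays closer to the structure of the other proofs in the appendix. One small remark: your ``weighted-average identity'' is literally valid only for slope functions of the form $\mu=d/r$, but the conclusions you draw from it (namely that $\mu(\al_{l+1}+\dots+\al_j)>\ta$ and hence $\mu(\al'_j)\ge\ta'$) follow already from the seesaw property of a general stability function, so the argument goes through in the stated generality.
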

\begin{proof}
If $c^{[\ta',\ta]}$ described by the last formula is inverse to $a^{[\ta',\ta]}$ for any $\ta'<\ta$, then we obtain from the equation
\[a^{\ta} a^{[\ta',\ta)}=a^{[\ta',\ta]},\]
that
\[c^{[\ta',\ta]}a^{\ta}=c^{[\ta',\ta)}.\]
Conversely, it is enough to prove that the $c^{[\ta',\ta]}$ satisfy this formula, as we know that $c^{[\ta',+\infty]}$ is inverse to $a^{[\ta',+\infty]}$.

Let us compare the coefficients of $y^\al$ on both sides. We can assume that $\ta'\le\mu(\al)\le\ta$. We will denote the sequences appearing in the definition of $c^{[\ta',\ta]}_\al$ by $\cP_\al'(\ta,\ta')$ and the sequences appearing in the definition of $c^{[\ta',\ta)}_\al$ by $\cP_\al'(\ta-,\ta')$.
Assume that a sequence $\ub\al=(\al_1,\dots,\al_k)\in\cP_\al$ (that is, the summand $b_{\al_1}\dots b_{\al_k}$) appears in the product on the left. There are three possibilities
\begin{enumerate}
	\item It is a summand of $1\cdot a^\ta$. In this case $\mu(\al)=\ta$ and $\mu(\al'_i)>\ta$ for $1\le i<k$. This implies that $\ub\al\in\cP_\al'(\ta,\ta')$.
	\item It is a summand of $c^{[\ta',\ta]}\cdot 1$. In this case $\ub\al\in\cP'_\al(\ta,\ta')$ automatically.
	\item It is a summand of $c^{[\ta',\ta]}_\be\cdot a^\ta_\ga$, where $\be+\ga=\al$. Let $1\le j<k$ be such that $\ub\be=(\al_1,\dots,\al_j)\in\cP_\be$ and $\ub\ga=(\al_{j+1},\dots,\al_k)\in\cP_\ga$. Then
$\ta'\le\mu(\be)\le\ta$, $\mu(\ga)=\ta$ and $\ub\ga$ is stable, that is, $\mu(\al-\al'_i)<\ta$ for $i>j$ and $\mu(\al-\al'_j)=\ta$. This means that such $j$ is unique. We claim that $\ub\al\in\cP'_\al(\ta,\ta')$. We know that $\mu(\al'_j)\ge\ta'$ and $\mu(\al'_i-\al'_j)>\ta>\ta'$ for $j<i<k$. This implies that $\mu(\al'_i)\ge\ta'$ for any $1\le i\le k$. We know that for $i>j$: $\mu(\al-\al'_i)<\ta$, $\mu(\al-\al'_j)=\ta$, and for $i<j$:
$\mu(\al'_j-\al'_i)\le\ta$ and therefore $\mu(\al-\al'_i)\le\ta$. This proves that $\ub\al\in\cP'_\al(\ta,\ta')$.
\end{enumerate}

Assume that $\mu(\al)=\ta$ and $\ub\al\in\cP_\al'(\ta,\ta')$. Then $\mu(\al-\al'_i)\le\ta$ and therefore also $\mu(\al'_i)\ge\ta$ for any $1\le i<k$. Assume that there is an equality for some $i$. Then $\ub\al$ appears in cases 2, 3 (with different signs), but does not appear in the first case. If there is a strict inequality for every $i$ then $\ub\al$ appears in cases 1, 2 (with different signs), but does not appear in the third case. In both situations the contribution of $\ub\al$ is zero.

Assume that $\mu(\al)<\ta$ and $\ub\al\in\cP_\al'(\ta,\ta')$. The first case cannot appear. We have $\mu(\al-\al'_i)\le\ta$ for $1\le i<k$. Assume that there is an equality for some $i$.
Let $j$ be the maximal index with this property. Then $(\al_1,\dots,\al_j)\in\cP'(\ta,\ta')$ and $(\al_{j+1},\dots,\al_k)$ is stable having slope \ta. 
Then $\ub\al$ appears in cases 2, 3 (with different signs) and its contribution is zero. If there is a strict inequality for every $i$ then $\ub\al$ appears just in case 2. Moreover, $\ub\al$ belongs to $\cP_\al(\ta-,\ta')$.
\end{proof}

\begin{corollary}
\label{crl:invert ray}
Let $a^{\ta}=1+\sum_{\mu(\al)=\ta}a_\al y^\al$ and \[(a^{\ta})\inv=c^{\ta}=1+\sum_{\mu(\al)=\ta}c^{\ta}_\al y^\al.\]
Then, for any $\al$ with $\mu(\al)=\ta$,
\[c^{\ta}_\al
=\sum_{\over{(\al_1,\dots,\al_k)\in\cP_\al}{
\mu(\al'_i)\ge\ta\ \forall 1\le i<k
}}(-1)^{k}b_{\al_1}\dots b_{\al_k}=c^{\le\ta}_\al=c^{\ge\ta}_\al.\]
\end{corollary}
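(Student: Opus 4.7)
The plan is to derive this corollary directly from the preceding theorem on $c^{[\ta',\ta]}$ by specializing $\ta'=\ta$. First I would observe that $a^{[\ta,\ta]}=a^{\ta}$: in the defining sum \eqref{eq:ta-ta'}, the constraint $\ta\ge\mu(\al_1)>\dots>\mu(\al_k)\ge\ta$ forces $k=1$ and $\mu(\al_1)=\ta$, so the only surviving term is $a_\al$ itself, whenever $\mu(\al)=\ta$. Consequently $(a^{\ta})\inv=(a^{[\ta,\ta]})\inv$, and the preceding theorem gives
\[c^{\ta}_\al=\sum_{\over{(\al_1,\dots,\al_k)\in\cP_\al}{\mu(\al'_i)\ge\ta\text{ and }\mu(\al-\al'_i)\le\ta\ \forall 1\le i<k}}(-1)^k b_{\al_1}\dots b_{\al_k}\]
for every $\al$ with $\mu(\al)=\ta$.

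Next I would use the seesaw property of the stability function $\mu$ (the slope of $\al=\al'_i+(\al-\al'_i)$ lies between the slopes of the two summands), which is the property implicitly used throughout the proof of Theorem~\ref{th:slices}. Since $\mu(\al)=\ta$, the seesaw yields the equivalence $\mu(\al'_i)\ge\ta\iff\mu(\al-\al'_i)\le\ta$, so the two constraints in the sum above collapse to a single one, giving exactly the displayed formula for $c^{\ta}_\al$.

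Finally, the equalities $c^{\ta}_\al=c^{\ge\ta}_\al=c^{\le\ta}_\al$ for $\mu(\al)=\ta$ are immediate. Direct inspection of statements (1) and (2) of the inversion theorem shows that, under the same seesaw equivalence, $c^{\ge\ta}_\al$ and $c^{\le\ta}_\al$ both reduce to the same sum. As an additional sanity check I would note the factorization $a^{\ge\ta}=a^{>\ta}\circ a^{\ta}$ (in decreasing slope order), whose inversion gives $(a^{\ge\ta})\inv=c^{\ta}\cdot(a^{>\ta})\inv$; since $(a^{>\ta})\inv$ contributes only the constant term $1$ and terms of slope strictly greater than $\ta$, the coefficient of $y^\al$ at $\mu(\al)=\ta$ on the right is $c^{\ta}_\al$, so $c^{\ge\ta}_\al=c^{\ta}_\al$. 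A symmetric argument with $a^{\le\ta}=a^{\ta}\circ a^{<\ta}$ gives the other equality. There is no real obstacle here: the corollary is a clean specialization of the previous theorem, the only delicate point being the appeal to the seesaw property for $\mu$, which is already implicit in the paper's standing framework.
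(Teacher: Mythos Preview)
Your proposal is correct and matches the paper's intended approach: the corollary is placed immediately after the theorem on $c^{[\ta',\ta]}$ precisely because it is the specialization $\ta'=\ta$, together with the seesaw identity $\mu(\al'_i)\ge\ta\iff\mu(\al-\al'_i)\le\ta$ for $\mu(\al)=\ta$, which is already used throughout the proof of Theorem~\ref{th:slices}. Your supplementary factorization argument via $a^{\ge\ta}=a^{>\ta}\,a^{\ta}$ and $a^{\le\ta}=a^{\ta}\,a^{<\ta}$ is a pleasant independent check but not needed.
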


\subsection{Slice formula for curves}
We consider the lattice $\Ga=\bZ^2$ and the semigroup
\[\Ga_+=\sets{(r,d)\in\Ga}{r\ge1}\]
with stability $\mu(r,d)=d/r$.
In contrast to the previous situation the elements in $\Ga_+$ could admit an infinite number of partitions. One simplification though will be that the elements $b_\al$, for $\al=(r,d)\in\Ga_+$, will be independent of $d$. Let $A$ be a commutative ring and let 
\[(a_{\al})_{\al\in\Ga_+},\qquad (b_r)_{r\ge1}\]
be two families of elements in $A\lser x$ such that for any $\al=(r,d)$ \cite[Eq.24]{zagier_elementary}
\[b_r=\sum_{\over{\al_1+\dots+\al_k=\al}{\mu(\al_1)>\dots>\mu(\al_k)}}x^{-\oh\sum_{i<j}\ang{\al_i,\al_j}}a_{\al_1}\dots a_{\al_k},\]
where $\ang{(r,d),(r',d')}=2(rd'-r'd)$.

\begin{remark}
In order to relate this equation to \eqref{eq:b as a}, we consider a non-commutative ring $R=A\lser x[\Ga_+]$ with multiplication
\[y^\al\circ y^\be=x^{-\oh\ang{\al,\be}}y^{\al+\be},\qquad \al,\,\be\in\Ga_+\]
and elements
\[\ub a_\al=a_\al y^\al,\qquad \ub b_\al=b_r y^\al\]
for $\al=(r,d)\in\Ga_+$. Then the above equation implies
\[\ub b_\al=\sum_{\over{\al_1+\dots+\al_k=\al}{\mu(\al_1)>\dots>\mu(\al_k)}}\ub a_{\al_1}\circ \dots\circ \ub a_{\al_k}\]
which is equivalent to \eqref{eq:b as a}.
\end{remark}

For any $\ta\in\bR$ define new elements
\[a^{\le\ta}_\al=\sum_{\over{\al_1+\dots+\al_k=\al}{\ta\ge\mu(\al_1)>\dots>\mu(\al_k)}}x^{-\oh\sum_{i<j}\ang{\al_i,\al_j}}a_{\al_1}\dots a_{\al_k}\]
and similarly define $a^{\ge\ta}_\al$, $a^{<\ta}_\al$, $a^{>\ta}_\al$.
Applying the previous results we can give a new proof of \cite[Theorem~3]{zagier_elementary}.

\begin{theorem}
We have
\begin{enumerate}
	\item For $\mu(\al)\le\ta$ (we define $r'_i=r_1+\dots+r_i$)
\[a^{\le\ta}_\al=\sum_{r_1+\dots+r_k=r}(-1)^{k-1}b_{r_1}\dots b_{r_k}
x^{-(r-r_k)d}\prod_{i=1}^{k-1}\frac{x^{(r_i+r_{i+1})(1+\flr{r'_i\ta})}}{1-x^{r_i+r_{i+1}}}.\]
	\item For $\mu(\al)\ge\ta$
\[a^{\ge\ta}_\al=\sum_{r_1+\dots+r_k=r}(-1)^{k-1}b_{r_1}\dots b_{r_k}
x^{(r-r_k)d}\prod_{i=1}^{k-1}\frac{x^{(r_i+r_{i+1})(1-\ceil{r'_i\ta})}}{1-x^{r_i+r_{i+1}}}.\]
	\item For $\mu(\al)<\ta$
\[a^{<\ta}_\al=\sum_{r_1+\dots+r_k=r}(-1)^{k-1}b_{r_1}\dots b_{r_k}
x^{-(r-r_k)d}\prod_{i=1}^{k-1}\frac{x^{(r_i+r_{i+1})\ceil{r'_i\ta}}}{1-x^{r_i+r_{i+1}}}.\]
	\item For $\mu(\al)>\ta$
\[a^{>\ta}_\al=\sum_{r_1+\dots+r_k=r}(-1)^{k-1}b_{r_1}\dots b_{r_k}
x^{(r-r_k)d}\prod_{i=1}^{k-1}\frac{x^{-(r_i+r_{i+1})\flr{r'_i\ta}}}{1-x^{r_i+r_{i+1}}}.\]
\end{enumerate}
\end{theorem}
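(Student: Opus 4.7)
The plan is to lift to the non-commutative setting of the Remark and apply Theorem \ref{th:slices}. Working in $R=A\lser{x}[\Ga_+]$ with product $y^\al\circ y^\be=x^{-\frac12\ang{\al,\be}}y^{\al+\be}$, I set $\ub a_\al=a_\al y^\al$ and $\ub b_\al=b_r y^\al$ for $\al=(r,d)$; by hypothesis these elements satisfy \eqref{eq:b as a}. Theorem \ref{th:slices} then gives, e.g.\ for $\mu(\al)\le\ta$,
\[a^{\le\ta}_\al\,y^\al=\sum_{\over{(\al_1,\dots,\al_k)\in\cP_\al}{\mu(\al'_i)>\ta\ \forall 1\le i<k}}(-1)^{k-1}\ub b_{\al_1}\circ\dots\circ\ub b_{\al_k},\]
and analogous identities for the remaining three slope inequalities.

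The next step is to expand $\ub b_{\al_1}\circ\dots\circ\ub b_{\al_k}=b_{r_1}\dots b_{r_k}\,x^{-\frac12\sum_{i<j}\ang{\al_i,\al_j}}y^\al$ and rewrite the exponent in a form adapted to partial sums. Writing $R_i=r_1+\dots+r_i$ and $D_i=d_1+\dots+d_i$, and using $\ang{(r,d),(r',d')}=2(rd'-r'd)$, a short calculation (expand $\sum_{i<j}(r_jd_i-r_id_j)$ and regroup by the $D_i$) gives
\[-\tfrac12\sum_{i<j}\ang{\al_i,\al_j}=\sum_{i=1}^{k-1}(r_i+r_{i+1})D_i-(r-r_k)d.\]
The crucial feature is that this is linear in the partial sums $D_i$ with strictly positive coefficients $r_i+r_{i+1}\ge 2$, so the degree summations will decouple into independent geometric series.

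I then split the outer sum as a sum over rank compositions $r_1+\dots+r_k=r$ followed by a sum over degree sequences $(d_1,\dots,d_k)\in\bZ^k$ with $D_k=d$, subject to the stability constraints. These constraints become independent lower bounds on the $D_i$: in case (1), $\mu(\al'_i)>\ta$ reads $D_i\ge\flr{R_i\ta}+1$; in case (2), $\mu(\al-\al'_i)<\ta$ reads $D_i\ge d-\ceil{(r-R_i)\ta}+1$; cases (3) and (4) are analogous with strict/non-strict inequalities swapped. Since $b_r$ is independent of $d$, the inner sum factors as $\prod_i\bigl(\sum_{D_i\ge N_i}x^{(r_i+r_{i+1})D_i}\bigr)=\prod_i x^{(r_i+r_{i+1})N_i}/(1-x^{r_i+r_{i+1}})$, which combined with the prefactor $x^{-(r-r_k)d}$ yields formula (1) directly.

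For the $\ge\ta$ and $>\ta$ cases this procedure naturally outputs the prefactor $x^{(r-r_1)d}$ together with ceilings/floors of $(r-R_i)\ta$. To match the stated form I reindex via the reversal $(r_1,\dots,r_k)\mapsto(r_k,\dots,r_1)$ of the summation, which is legitimate because $A$ is commutative (so $b_{r_1}\dots b_{r_k}=b_{r_k}\dots b_{r_1}$); under this reversal $(r-R_i)$ becomes $r'_{k-i}$ in the new indexing and $(r-r_1)$ becomes $(r-r_k)$, producing exactly the stated formulas (2) and (4). The only real obstacle is bookkeeping of floor/ceiling arithmetic (in particular distinguishing strict from non-strict inequalities when $R_i\ta\in\bZ$) and of the reindexing; no further geometry beyond Theorem \ref{th:slices} and elementary geometric summation is required, since the independence of $b_r$ from $d$ is exactly what decouples the degree sums into elementary series.
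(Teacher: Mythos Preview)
Your proposal is correct and follows essentially the same route as the paper's proof: both pass to the non-commutative ring $R=A\lser{x}[\Ga_+]$ via the Remark, invoke Theorem \ref{th:slices}, rewrite the exponent $-\tfrac12\sum_{i<j}\ang{\al_i,\al_j}$ as $\sum_{i=1}^{k-1}(r_i+r_{i+1})d'_i-(r-r_k)d$, and then sum the resulting independent geometric series in the partial degree sums. The paper only writes out case (1) and leaves the rest to the reader; your explicit treatment of the reversal $(r_1,\dots,r_k)\mapsto(r_k,\dots,r_1)$ for cases (2) and (4), justified by commutativity of $A$, is the natural way to fill that gap.
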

\begin{proof}
We will prove just the first equation.
Using Theorem \ref{th:slices} we obtain
\begin{multline}
\label{eq:finite to zagier}
a^{\le\ta}_\al
=\sum_{\over{(\al_1,\dots,\al_k)\in\cP_\al}{\mu(\al'_i)>\ta\ \forall i<k,\,\al_i=(r_i,d_i)}}(-1)^{k-1}x^{-\oh\sum_{i<j}\ang{\al_i,\al_j}}b_{r_1}\dots b_{r_k}\\
=\sum_{r_1+\dots+r_k=r}(-1)^{k-1}b_{r_1}\dots b_{r_k}
\sum_{\over{d_1+\dots+d_k=d}{\mu(\al'_i)>\ta\ \forall i<k}}x^{-\oh\sum_{i<j}\ang{\al_i,\al_j}}.
\end{multline}
Let $(r_1,\dots,r_k)$, $(d_1,\dots,d_k)$ be two sequences and let $r=\sum_{i=1}^k r_i$, $d=\sum_{i=1}^k d_i$, $d'_i=d_1+\dots+d_i$. Then
\[\sum_{i<j}(r_id_j-r_jd_i)
=\sum_{i=1}^{k-1}(r_id-(r_i+r_{i+1})d'_i)
=(r-r_k)d-\sum_{i=1}^{k-1}(r_i+r_{i+1})d'_i.\]
In particular,
\[\oh\sum_{i<j}\ang{\al_i,\al_j}
=\sum_{i<j}(r_id_j-r_jd_i)
=(r-r_k)d-\sum_{i=1}^{k-1}(r_i+r_{i+1})d'_i.\]
For a given sequence $(r_1,\dots,r_k)$, we have
\begin{align*}
\sum_{\over{d_1+\dots+d_k=d}{\mu(\al'_i)>\ta\ \forall i<k}}x^{-\oh\ang{\al_i,\al_j}}
&=x^{-(r-r_k)d}\sum_{\over{d'_1,\dots,d'_{k-1}}{d'_i>r'_i\ta}}x^{\sum_{i=1}^{k-1}(r_i+r_{i+1})d'_i}\\
&=x^{-(r-r_k)d}\prod_{i=1}^{k-1}\frac{x^{(r_i+r_{i+1})(1+\flr{r'_i\ta})}}{1-x^{r_i+r_{i+1}}},
\end{align*}
where we applied the formula
\[\sum_{d>\ta}x^d=\frac{x^{\flr \ta+1}}{1-x}.\]
\end{proof}

\begin{remark}
Note that
\[
\sum_{i=1}^{k-1}(r_i+r_{i+1})(1+\flr{r'_i\ta})
=\sum_{i=1}^{k-1}(r_i+r_{i+1})(\ang{r'_i\ta}+r'_i\ta)
=\sum_{i=1}^{k-1}(r_i+r_{i+1})\ang{r'_i\ta}+(r-r_k)\ta r,
\]
where $\ang u=1+\flr u-u$. This implies
\[a^{\le\ta}_\al=\sum_{r_1+\dots+r_k=r}(-1)^{k-1}b_{r_1}\dots b_{r_k}
x^{(r-r_k)(\ta r-d)}\prod_{i=1}^{k-1}\frac{x^{(r_i+r_{i+1})\ang{(r'_i)\ta}}}{1-x^{r_i+r_{i+1}}}.\]
This result was originally proved by Zagier \cite[Theorem 3]{zagier_elementary} by different methods.
\end{remark}

\begin{remark}
If $\ta=\mu(\al)=d/r$ then $a^{\le\ta}_\al=a_\al$. Therefore
\[a_\al=\sum_{r_1+\dots+r_k=r}(-1)^{k-1}b_{r_1}\dots b_{r_k}
x^{-(r-r_k)d}\prod_{i=1}^{k-1}\frac{x^{(r_i+r_{i+1})(1+\flr{r'_id/r})}}{1-x^{r_i+r_{i+1}}}.\]
\end{remark}

\begin{remark}
We can actually write an explicit formula for $a^{[\ta',\ta]}_\al$. To do this we should consider in the proof of the theorem an additional condition $\mu(\al-\al'_i)<\ta'$ which is equivalent to
$d'_i>d-(r-r'_i)\ta'$. Therefore we should require
\[d'_i>\min\set{r'_i\ta,d+(r'_i-r)\ta'}.\]
Repeating the arguments of the theorem we obtain
\[a^{[\ta',\ta]}_\al=\sum_{r_1+\dots+r_k=r}(-1)^{k-1}b_{r_1}\dots b_{r_k}
x^{-(r-r_k)d}\prod_{i=1}^{k-1}\frac{x^{(r_i+r_{i+1})(1+\flr{\min\set{r'_i\ta,d+(r'_i-r)\ta'}})}}{1-x^{r_i+r_{i+1}}}.\]
\end{remark}

\begin{remark}\label{rk811}
Using the variable $q=x\inv$, we can write
\begin{equation}
b_r=\sum_{\over{\al_1+\dots+\al_k=\al}{\mu(\al_1)>\dots>\mu(\al_k)}}(-q^\oh)^{\sum_{i<j}\ang{\al_i,\al_j}}a_{\al_1}\dots a_{\al_k},
\end{equation}
\begin{equation}
a^{\ge\ta}_\al=\sum_{r_1+\dots+r_k=r}b_{r_1}\dots b_{r_k}
q^{-(r-r_k)d}\prod_{i=1}^{k-1}\frac{q^{(r_i+r_{i+1})\ceil{r'_i\ta}}}{1-q^{r_i+r_{i+1}}}.
\label{eq:ge ta}
\end{equation}
If $\ta=\mu(\al)$ then $a_\al=a^{\ge\ta}_\al=a^{\le\ta}_\al$. This implies
\begin{align}
a_\al
&=\sum_{r_1+\dots+r_k=r}b_{r_1}\dots b_{r_k}
q^{-(r-r_k)d}\prod_{i=1}^{k-1}\frac{q^{(r_i+r_{i+1})\ceil{r'_id/r}}}{1-q^{r_i+r_{i+1}}}\\
&=\sum_{r_1+\dots+r_k=r}b_{r_1}\dots b_{r_k}
q^{(r-r_k)d}\prod_{i=1}^{k-1}\frac{q^{-(r_i+r_{i+1})\flr{r'_id/r}}}{1-q^{r_i+r_{i+1}}}.\nonumber
\end{align}
\end{remark}

\subsection{Inversion formula for curves}
In the same way as in section \ref{subs:inversion} we can define the elements $c^{\le\ta}_\al$, $c^{\ge\ta}_\al$, $c^{<\ta}_\al$, $c^{>\ta}_\al$ in the case of curves. For example
\[1+\sum_{\mu(\al)\le\ta}c^{\le\ta}_\al y^\al=\rbr{1+\sum_{\mu(\al)\le\ta}a^{\le\ta}_\al y^\al}\inv.\]

\begin{theorem}
We have
\begin{enumerate}
	\item For $\mu(\al)\le\ta$
\[c^{\le\ta}_\al=\sum_{r_1+\dots+r_k=r}(-1)^{k}b_{r_1}\dots b_{r_k}
x^{(r-r_k)d}\prod_{i=1}^{k-1}\frac{x^{-(r_i+r_{i+1})\flr{r'_i\ta}}}{1-x^{r_i+r_{i+1}}}.\]
	\item For $\mu(\al)\ge\ta$
\[c^{\ge\ta}_\al=\sum_{r_1+\dots+r_k=r}(-1)^{k}b_{r_1}\dots b_{r_k}
x^{-(r-r_k)d}\prod_{i=1}^{k-1}\frac{x^{(r_i+r_{i+1})\ceil{r'_i\ta}}}{1-x^{r_i+r_{i+1}}}.\]
	\item For $\mu(\al)<\ta$
\[c^{<\ta}_\al=\sum_{r_1+\dots+r_k=r}(-1)^{k}b_{r_1}\dots b_{r_k}
x^{(r-r_k)d}\prod_{i=1}^{k-1}\frac{x^{(r_i+r_{i+1})(1-\ceil{r'_i\ta})}}{1-x^{r_i+r_{i+1}}}.\]
	\item For $\mu(\al)>\ta$
\[c^{>\ta}_\al=\sum_{r_1+\dots+r_k=r}(-1)^{k}b_{r_1}\dots b_{r_k}
x^{-(r-r_k)d}\prod_{i=1}^{k-1}\frac{x^{(r_i+r_{i+1})(1+\flr{r'_i\ta})}}{1-x^{r_i+r_{i+1}}}.\]
\end{enumerate}
\end{theorem}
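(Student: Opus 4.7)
The plan is to mirror the strategy of the previous theorem (the slice formula for curves), combining the general inversion theorem stated above for an abstract lattice $\Ga$ with the explicit geometric-series computation particular to $\Ga=\bZ^2$ and the twisted multiplication $y^\al\circ y^\be=x^{-\oh\ang{\al,\be}}y^{\al+\be}$ in the ring $R=A\lser x[\Ga_+]$.

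First, applying the general inversion theorem and substituting $\ub b_\al=b_r y^\al$, one obtains for each of $c^{\ge\ta}_\al,c^{\le\ta}_\al,c^{>\ta}_\al,c^{<\ta}_\al$ a signed sum $\sum(-1)^k\ub b_{\al_1}\circ\cdots\circ\ub b_{\al_k}$ over compositions $(\al_1,\dots,\al_k)\in\cP_\al$ satisfying explicit slope constraints on the partial sums $\al'_i$ or on $\al-\al'_i$. Unfolding the $\circ$-product produces a factor $x^{-\oh\sum_{i<j}\ang{\al_i,\al_j}}$ inside an inner sum over degree tuples $(d_1,\dots,d_k)$ with $\sum d_i=d$. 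Using the identity $\oh\sum_{i<j}\ang{\al_i,\al_j}=(r-r_k)d-\sum_{i=1}^{k-1}(r_i+r_{i+1})d'_i$ already derived in the proof of the slice formula, the inner sum factorizes into a product of one-variable geometric series in $d'_i$, each of the form $\sum_{d'\ge T}x^{(r_i+r_{i+1})d'}=x^{(r_i+r_{i+1})\ceil T}/(1-x^{r_i+r_{i+1}})$ or the strict variant with $1+\flr T$ in place of $\ceil T$.

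For cases (2) and (4), the constraint $\mu(\al'_i)\ge\ta$ (resp.\ $>\ta$) translates directly to $d'_i\ge r'_i\ta$ (resp.\ $>$), and combining the resulting product with the outer $x^{-(r-r_k)d}$ yields the claimed closed forms. For cases (1) and (3), the constraint $\mu(\al-\al'_i)\le\ta$ (resp.\ $<\ta$) becomes $d'_i\ge d-(r-r'_i)\ta$ (resp.\ $>$); I would use the identities $\ceil{d-y}=d-\flr y$ and $\flr{d-y}+1=d+1-\ceil y$ (valid for $d\in\bZ$) to pull the $d$-dependence outside, absorbing the resulting factor $x^{d(2r-r_1-r_k)}$ into the prefactor to obtain an expression of shape $x^{(r-r_1)d}\prod x^{-(r_i+r_{i+1})\flr{(r-r'_i)\ta}}/(1-x^{r_i+r_{i+1}})$ (and similarly with $1-\ceil{(r-r'_i)\ta}$ for the strict case). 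A final reversal of compositions $(r_1,\dots,r_k)\mapsto(r_k,\dots,r_1)$, which is a bijection on compositions of $r$ preserving $b_{r_1}\dots b_{r_k}$ by commutativity of the $b$'s in $A$, swaps the prefactor $x^{(r-r_1)d}$ with $x^{(r-r_k)d}$ and transforms $r-r'_i$ appearing inside the exponents into $r'_{k-i}$, producing exactly the announced formulas after reindexing.

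The main obstacle is the careful bookkeeping of ceilings, floors, and the strict/non-strict dichotomy when translating each slope constraint into an integer inequality on $d'_i$, together with the partition-reversal trick needed to convert the natural constraint $\mu(\al-\al'_i)$ into the claimed expressions involving $r'_i$ rather than $r-r'_i$; a single mistracked $\pm 1$ or misplaced strict inequality would break the entire closed-form identity.
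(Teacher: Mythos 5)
Your proposal is correct and follows essentially the same route as the paper: substitute the general inversion theorem into the twisted ring, use the identity $\oh\sum_{i<j}\ang{\al_i,\al_j}=(r-r_k)d-\sum_i(r_i+r_{i+1})d'_i$, and evaluate the resulting geometric series in the $d'_i$. The paper only writes out case (4) explicitly (where the constraint is already on $\mu(\al'_i)$ and no reversal is needed), leaving the others to the reader; your treatment of cases (1) and (3) via $\ceil{d-y}=d-\flr y$ and the composition-reversal bijection is exactly the intended completion and the bookkeeping checks out.
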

\begin{proof}
We will prove just the last equation.
In the same way as in Eq.~\eqref{eq:finite to zagier}, the coefficient of $(-1)^kb_{r_1}\dots b_{r_k}$ in $c^{>\ta}_\al$ is
\[\sum_{\over{d_1+\dots+d_k=d}{\mu(\al'_i)>\ta\ \forall i<k}}x^{-\oh\sum_{i<j}\ang{\al_i,\al_j}}
=x^{-(r-r_k)d}\prod_{i=1}^{k-1}\frac{x^{(r_i+r_{i+1})(1+\flr{r'_i\ta})}}{1-x^{r_i+r_{i+1}}}.\]
\end{proof}

\begin{remark}
Let $(1+\sum_{\mu(\al)=\ta}a_\al y^\al)=1+\sum_{\mu(\al)=\ta}c_\al y^\al$. Then, according to Corollary \ref{crl:invert ray}, we have $c_\al=c^{\le\ta}_\al=c^{\ge\ta}_\al$ for any \al with $\mu(\al)=\ta$. 
\end{remark}

\begin{remark}\label{rk814}
Using the variable $q=x\inv$, we can write
\begin{equation}
c^{>\ta}_\al=-\sum_{r_1+\dots+r_k=r}b_{r_1}\dots b_{r_k}
q^{(r-r_k)d}\prod_{i=1}^{k-1}\frac{q^{-(r_i+r_{i+1})\flr{r'_i\ta}}}{1-q^{r_i+r_{i+1}}}.
\label{eq:>ta inv}
\end{equation}
\end{remark}

\providecommand{\bysame}{\leavevmode\hbox to3em{\hrulefill}\thinspace}
\providecommand{\href}[2]{#2}

\end{document}